\documentclass[11pt,onecolumn]{amsart}

\usepackage[margin=1.15in]{geometry}
\usepackage[T1]{fontenc}
\usepackage{lmodern}
\usepackage{microtype}
\usepackage{amsmath}
\usepackage{amssymb}
\usepackage{amsthm}
\usepackage{aliascnt}
\usepackage{mathtools}
\usepackage{fancyhdr}
\usepackage{hyperref}
\usepackage{cleveref}
\usepackage[shortlabels]{enumitem}

\usepackage{tikz}
\usetikzlibrary{calc,arrows.meta}

\setlist[itemize]{leftmargin=2.2em,itemsep=0.25ex,topsep=0.6ex}
\setlist[enumerate]{leftmargin=2.2em,itemsep=0.25ex,topsep=0.6ex}
\theoremstyle{plain}
\newtheorem{theorem}{Theorem}[section]
\newtheorem{proposition}[theorem]{Proposition}
\newtheorem{lemma}[theorem]{Lemma}
\newtheorem{corollary}[theorem]{Corollary}

\theoremstyle{definition}
\newtheorem{definition}[theorem]{Definition}

\theoremstyle{remark}
\newtheorem{remark}[theorem]{Remark}

\newcommand{\RowNorm}{\operatorname{RowNorm}}
\newcommand{\T}{\mathcal T}
\newcommand{\V}{\mathcal V}
\newcommand{\NH}{\mathcal N}

\newcommand{\disc}{\operatorname{disc}}

\title{Bang--bang representation of $3\times 3$ embeddable stochastic matrices}

\author{Leonel Robert}
\address{\parbox{\linewidth}{Department of Mathematics, University of Louisiana at Lafayette, \\
		217 Maxim Doucet Hall, 1401 Johnston Street, Lafayette, LA 70503, USA}}
\email{lrobert@louisiana.edu}

\begin{document}
\begin{abstract}
	We prove that every embeddable $3\times3$ row-stochastic matrix is a
	product of at most seven elementary row-stochastic matrices. Frydman's
	example shows that this bound is sharp. The proof combines Frydman's
	six-factor criterion and spiral theorem with a reduction to a
	three-parameter critical family and an analysis of the membership
	certificates that persist along it.
\end{abstract}

\maketitle
\section{Introduction}
In this note we present a solution to the bang--bang problem for
non-homogeneous continuous-time Markov processes with three states. This problem
was actively investigated in the 1970s and early 1980s by Goodman,
Johansen--Ramsey, and Frydman, among others
\cite{Goodman1970,JohansenRamsey1979,FrydmanSinger1979,FrydmanEmbedding1980,Frydman1980,Frydman1983}.
Since then, it has received almost no attention (but see \cite{BaakeSumner2024,Singer2025}).

By a non-homogeneous continuous-time Markov process with $n$ states we mean a continuous map $(s,t)\mapsto P(s,t)$, for $0\leq s\leq t<\infty$, 
where $P(s,t)$ is a row-stochastic $n\times n$ matrix, such that $P(s,s)=\mathrm{Id}$ for all $s\geq 0$ and
\[
P(r,s)P(s,t)=P(r,t)\text{ for all }r\leq s\leq t.
\]
 An $n\times n$ row-stochastic matrix $P$ is called embeddable, or attainable, if  $P=P(0,1)$ for some  Markov process $P(\cdot,\cdot)$.

For indices $1\leq i,j\leq n$ with $i\neq j$, and for $0\leq t<1$, put
\[
K_{ij}(t)=I+t(E_{ij}-E_{ii}).
\]
 We call these matrices \emph{elementary row-stochastic matrices}, abbreviated e.r.s. matrices. 
Left multiplication of $P$ by $K_{ij}(t)$ replaces row $i$ by
$(1-t)P_i+tP_j$ (where $P_i,P_j$ are the rows $i$ and $j$ of $P$) and leaves the other rows unchanged. 

Every e.r.s. matrix is attainable, as is any finite product of e.r.s. matrices.
Conversely, by the chattering principle, every attainable matrix is a limit of finite products of e.r.s.
matrices (see \cite{JohansenCLT, Sussmann1972}). For $n=3$, Johansen and
Ramsey showed in \cite{JohansenRamsey1979} that every attainable $P$ is a finite
product of e.r.s. matrices,
and that if $P$ is close enough to the identity, then it is a product of six e.r.s. matrices. On the other hand,
Frydman gave in \cite{Frydman1983} an example of an attainable $3\times 3$ row-stochastic matrix expressible as a product of seven
but no fewer e.r.s. matrices. Here we show the following:

\begin{theorem}
\label{thm:main}
Every attainable $3\times 3$ row-stochastic matrix is expressible as a
product of at most seven e.r.s. matrices.
\end{theorem}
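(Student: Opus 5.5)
The argument rests on two of Frydman's results, used as black boxes. First, the six-factor criterion characterizes when an attainable $3\times3$ matrix $P$ is a product of at most six e.r.s. matrices. Second, the spiral theorem describes the boundary of the attainable set and how attainable matrices arise along spiral-type trajectories. The overall strategy is to peel off one e.r.s. factor and land inside the six-factor region. Concretely, given an attainable $P$ not covered by the criterion, I want an index pair $(i,j)$ and a parameter $t\in[0,1)$ with $P=K_{ij}(t)Q$, where $Q$ is attainable and satisfies Frydman's criterion. Since $Q$ is then a product of at most six e.r.s. matrices, $P$ is a product of at most seven.

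\textbf{Step 1: reduce to a critical family.} Left multiplication by $K_{ij}(t)$ moves row $i$ toward row $j$. Read backwards, $Q$ is obtained from $P$ by pushing row $i$ away from row $j$, along the line through $P_i$ and $P_j$. Using the permutation symmetry of the three states and the spiral theorem, I would show that the non-six-factor attainable matrices are controlled by a small family. After normalizing, I expect them to be parametrized (up to symmetry and up to left multiplication by harmless e.r.s. factors) by a three-parameter critical family $P(a,b,c)$: matrices lying on the part of the boundary where the spiral has made "too many turns". Any attainable $P$ outside the criterion region should be reachable from this family by factors that do not increase the count. So it suffices to treat the critical family.

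\textbf{Step 2: certificates along the family.} For each member $P(a,b,c)$ I exhibit an explicit $(i,j,t)$ such that $Q=K_{ij}(t)^{-1}P$ remains stochastic and satisfies the six-factor criterion. Frydman's criterion should be expressible as a finite set of polynomial inequalities: sign conditions on certain $2\times2$ minors or determinants, giving a "membership certificate". I would then track which certificate holds as $(a,b,c)$ moves through the parameter domain. The goal is a finite cover of the domain by regions, each with a fixed choice of $(i,j)$ and an explicit $t(a,b,c)$. Continuity of the defining inequalities shows each certificate persists on an open or closed region. The crux is ruling out gaps between regions, which I would check by comparing the relevant polynomial expressions at the region boundaries.

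\textbf{Step 3: sharpness.} The bound is sharp because Frydman's example needs exactly seven factors.

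\textbf{Main obstacle.} The main obstacle is Step 2, specifically showing that the certificates cover the whole critical family. Near degenerate parameters (determinant tending to $0$, or rows becoming collinear) the natural choice of $t$ may tend to $1$ or force $Q$ to leave the stochastic matrices. My first attempt there is to switch the index pair, using the symmetry of the spiral. Failing that, I would split off a different boundary factor (a right factor instead of a left one, via transposition-type symmetry of the criterion) and verify the sign conditions directly in the limit.
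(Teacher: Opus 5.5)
\textbf{There is a genuine gap in Step 1.} You give no mechanism that makes a three-parameter family sufficient, and the one you suggest does not work as stated. A member of the critical family is given as a seven-fold product, so ``reaching'' other matrices from it by further factors gives, a priori, eight factors, not seven. The critical family cannot serve as a parametrization of the hard matrices.

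The missing idea is to recast the theorem as invariance of $R_7$ under pull-in moves, $X_{ij}(t)R_7\subseteq R_7$ (equivalently $R_8\subseteq R_7$). This must be combined with the Johansen--Ramsey theorem that attainable matrices are finite products of e.r.s.\ matrices, which you never invoke. Without it, the spiral theorem cannot even be applied: it concerns minimal representations of elements of $R_n\setminus(R_{n-1}\cup\V)$, not the boundary of the attainable set.

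Invariance is then proved by contradiction at a first-exit point:
\begin{enumerate}[(a)]
\item $R_7$ is relatively closed, so an exiting path has a last point $A\in R_7\setminus R_6$.
\item Appending the outgoing move $X_{23}(h)$ gives a minimal eight-move representation, so the spiral theorem fixes every move type.
\item Both six-move windows $A_0,A_1$ must lie on $\partial R_6$. Their last-move quadratics vanish at $c_6$ and $c_7$ with derivatives $-c_3c_4(c_2-c_5)$ and $-c_4c_5(c_3-c_6)$, so the simple-root openness lemma forces $c_5=c_2$ and $c_6=c_3$.
\item Diagonal conjugation then gives $U(x,r,y)$.
\end{enumerate}
That is where the three parameters come from: they describe the possible exit points, not the matrices that need seven factors.

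\textbf{Consequently, Step 2 targets the wrong statement.} Critical matrices are seven-fold products by construction, so producing $P(a,b,c)=K_{ij}(t)Q$ with $Q\in R_6$ proves nothing. What must be shown is $X_{23}(h)U\in R_7$ for all small $h>0$.

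Continuity of certificate inequalities cannot give this, because at critical points the natural certificate is degenerate. Removing $X_{12}(y)$ gives $B$ with $q_{12}^B(t)=-(r-1)(t-x)^2$, so $\widetilde H_{12}(B)=0$. Along $Q_h(s)=X_{12}(-s)X_{23}(h)U$, the equation $\widetilde H_{12}(Q_h(s))=0$ has a double root $s=y$ at $h=0$. Whether it survives as a real root for $h>0$ is decided by the sign of $\Delta'(0)=64(r-y)(r-2)(r-1)$, where $\Delta(h)$ is the discriminant in $s$; it must be positive.

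That sign fails in the cell $2<r<3$, $x<r\le y$, which the six-move elimination does not cover. There a genuinely different certificate is needed: a backward move $U_{g_*}=X_{23}(g_*)U$, with $g_*<0$ the vertex of $\Theta$, shown to lie in $\NH$. This gives $X_{23}(h)U=X_{23}(h-g_*)U_{g_*}\in R_7$. The obstacles you anticipate (determinant tending to $0$, $t\to1$) are not where the difficulty lies.
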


This implies that the set of attainable matrices is semialgebraic, by the  Tarski--Seidenberg theorem.
An explicit description of this set can be obtained from Frydman's description of
the products of six e.r.s. matrices in \cite{Frydman1980}.
 
The problem of describing the set of attainable $3\times 3$ row-stochastic matrices has a geometric formulation due to Goodman \cite{GoodmanCIME}. Fix a
nondegenerate triangle $ABC$ in the plane. We consider decreasing paths of triangles $t\mapsto A_tB_tC_t$ that start at $ABC$. Here, by decreasing we mean that $A_{t}B_tC_t$ is contained in $A_sB_sC_s$ whenever $s\leq t$. 

\begin{figure}[ht]
	\centering
	\begin{tikzpicture}[scale=1, every node/.style={font=\scriptsize}]
		
		\coordinate (A) at (0,0);
		\coordinate (B) at (5,0);
		\coordinate (C) at (1.6,3.2);
		
		\coordinate (O) at ($0.333*(A)+0.333*(B)+0.333*(C)$);
		
		\def\rOne{0.84}
		\def\rTwo{0.68}
		\def\rFinal{0.50}
		
		\coordinate (Aone) at ($(O)!\rOne!(A)$);
		\coordinate (Bone) at ($(O)!\rOne!(B)$);
		\coordinate (Cone) at ($(O)!\rOne!(C)$);
		
		\coordinate (Atwo) at ($(O)!\rTwo!(A)$);
		\coordinate (Btwo) at ($(O)!\rTwo!(B)$);
		\coordinate (Ctwo) at ($(O)!\rTwo!(C)$);
		
		\coordinate (Ap) at ($(O)!\rFinal!(A)$);
		\coordinate (Bp) at ($(O)!\rFinal!(B)$);
		\coordinate (Cp) at ($(O)!\rFinal!(C)$);
		
		\draw[thick] (A)--(B)--(C)--cycle;
		
		\draw[dashed] (A)--(Ap);
		\draw[dashed] (B)--(Bp);
		\draw[dashed] (C)--(Cp);
		
		\draw[dashed, gray] (Aone)--(Bone)--(Cone)--cycle;
		\draw[dashed, gray] (Atwo)--(Btwo)--(Ctwo)--cycle;
		
		\draw[thick] (Ap)--(Bp)--(Cp)--cycle;
		
		\fill (A) circle (1.4pt) node[below left] {$A$};
		\fill (B) circle (1.4pt) node[below right] {$B$};
		\fill (C) circle (1.4pt) node[above] {$C$};
		
		\fill (Ap) circle (1.4pt) node[below left] {$A'$};
		\fill (Bp) circle (1.4pt) node[below right] {$B'$};
		\fill (Cp) circle (1.4pt) node[above] {$C'$};
		
	\end{tikzpicture}
	\caption{A decreasing path from $ABC$ to $A'B'C'$.}
	\label{fig:decreasing-path}
\end{figure}

A nondegenerate triangle $A'B'C'$ contained in $ABC$ is called attainable if there is a decreasing
path from $ABC$ to $A'B'C'$. It is not difficult to show that $A'B'C'$ is attainable if and only if the matrix $P_{A'B'C'}$ whose rows are the barycentric coordinates of $A',B',C'$ with respect to $A,B,C$ is an attainable $3\times 3$ row-stochastic matrix \cite{GoodmanCIME}. Left multiplication of $P_{A'B'C'}$  by $K_{ij}(t)$ applies a pull-in move on $A'B'C'$, i.e., one of its vertices is moved towards one of the two other vertices along the segment connecting them. 

\begin{figure}[ht]
	\centering
	\begin{tikzpicture}[scale=1, every node/.style={font=\scriptsize}]
		
		\coordinate (A) at (0,0);
		\coordinate (B) at (5,0);
		\coordinate (C) at (1.6,3.2);
		
		\coordinate (O) at ($0.333*(A)+0.333*(B)+0.333*(C)$);
		\def\rFinal{0.5}
		
		\coordinate (Ap) at ($(O)!\rFinal!(A)$);
		\coordinate (Bp) at ($(O)!\rFinal!(B)$);
		\coordinate (Cp) at ($(O)!\rFinal!(C)$);
		
		\coordinate (App) at ($(Ap)!0.35!(Bp)$);
		
		\draw[thick] (A)--(B)--(C)--cycle;
		
		\draw[thick] (Ap)--(Bp)--(Cp)--cycle;
		
		\draw[dashed] (App)--(Bp)--(Cp)--cycle;
		
		\draw[-Latex,thick] (Ap)--(App);
		
		\fill (A) circle (1.4pt) node[below left] {$A$};
		\fill (B) circle (1.4pt) node[below right] {$B$};
		\fill (C) circle (1.4pt) node[above] {$C$};
		
		\fill (Ap) circle (1.4pt) node[below left] {$A'$};
		\fill (Bp) circle (1.4pt) node[below right] {$B'$};
		\fill (Cp) circle (1.4pt) node[above] {$C'$};
		\fill (App) circle (1.4pt) node[below] {$A''$};
		
	\end{tikzpicture}
	\caption{A pull-in move on $A'B'C'$: the vertex $A'$ is moved toward
		$B'$ along the segment $A'B'$.}
	\label{fig:pull-in}
\end{figure}

Theorem \ref{thm:main} has the following corollary:

\begin{corollary}
If a nondegenerate triangle $A'B'C'$ is attainable by a decreasing path from a triangle $ABC$,  then $A'B'C'$ can be attained
in at most seven pull-in moves starting from $ABC$.	
\end{corollary}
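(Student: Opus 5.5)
The corollary follows from Theorem~\ref{thm:main} by translating between triangles and barycentric matrices; I do not reprove the theorem. Suppose $A'B'C'$ is attainable from $ABC$ by a decreasing path. By the equivalence recalled above (from \cite{GoodmanCIME}), the matrix $P=P_{A'B'C'}$ is an attainable $3\times 3$ row-stochastic matrix. Its rows are the barycentric coordinates of $A',B',C'$ with respect to $A,B,C$. Theorem~\ref{thm:main} then gives a factorization
\[
P=K_{i_1j_1}(t_1)K_{i_2j_2}(t_2)\cdots K_{i_mj_m}(t_m),\qquad m\le 7,
\]
with $0\le t_k<1$. Since each $t_k<1$, every factor is invertible: its determinant is $1-t_k>0$.

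Next I read this product from the right as a sequence of left multiplications starting at the identity. Put $Q_m=I$ and $Q_{k-1}=K_{i_kj_k}(t_k)Q_k$, so that $Q_0=P$.
\begin{itemize}
\item $Q_m=I=P_{ABC}$ corresponds to the triangle $ABC$ itself.
\item Each $Q_k$ is a product of invertible row-stochastic matrices, so it is invertible and row-stochastic. It is therefore $P_{T_k}$ for a unique nondegenerate triangle $T_k$, namely the one whose vertices have the rows of $Q_k$ as barycentric coordinates. Since all coordinates are nonnegative, $T_k$ lies in $ABC$.
\item Left multiplication by $K_{ij}(t)$ replaces row $i$ by $(1-t)(\text{row } i)+t(\text{row } j)$. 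Barycentric coordinates are affine, so this moves the $i$-th vertex of $T_k$ toward its $j$-th vertex along the segment joining them.
\end{itemize}
Hence passing from $T_k$ to $T_{k-1}$ is a single pull-in move, and $T_0$ has vertices $A',B',C'$ in the correct labeling.

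This yields a chain $ABC=T_m\to T_{m-1}\to\cdots\to T_0=A'B'C'$ of $m\le 7$ pull-in moves. Factors with $t_k=0$ are identities and can simply be dropped, which only shortens the chain. No step here is a real obstacle: all the difficulty lies in Theorem~\ref{thm:main}. The only point needing care is the order of composition. The rightmost factor acts first, because left multiplication by $K_{ij}(t)$ acts on the current triangle's rows.
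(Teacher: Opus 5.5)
Your proposal is correct and follows the route the paper itself intends. The paper states this corollary without a written proof; it is read off from Theorem~\ref{thm:main} via Goodman's correspondence between attainable triangles and attainable barycentric matrices, together with the fact that left multiplication by $K_{ij}(t)$ is a pull-in move. Your bookkeeping of the factor order (the rightmost factor acts first, starting from $I=P_{ABC}$) and your check that every intermediate matrix is invertible and nonnegative supply exactly the details the paper leaves implicit.
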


We will not make explicit use of this geometric point of view
here, but it informs many of the arguments and calculations below. The attainability problem for polygons in the plane with more than three vertices
was solved in \cite{DecreasingPaths}.

\section{Preliminaries}
\label{sec:preliminaries}

All matrices in this section are $3\times3$ matrices. Given a matrix $P$, we 
write $P>0$ to indicate that all the entries of $P$
are positive.


\subsection{The stochastic formulation}
Let
\[
R_n=\left\{
K_{i_nj_n}(s_n)\cdots K_{i_1j_1}(s_1):
 i_\ell\neq j_\ell,\ 0\leq s_\ell<1
\right\},
\]
and put $R_\infty=\bigcup_nR_n$.

By a pull-in move on the set of $3\times 3$ row-stochastic matrices we mean a path of the form
\[
[0,c]\ni s\mapsto K_{ij}(s)P,
\]
where $0\leq c<1$. A pull-in path  is  a finite concatenation of pull-in moves, where left multiplication by $K_{ij}(s)$
appends the last move. Note that each  $P\in R_n$ can be reached by  a pull-in path starting at the identity matrix and
obtained by the concatenation of $n$ pull-in moves.

\begin{definition}
The threshold region $\T$ consists of the matrices in $R_\infty$ with at least one
zero entry.  

The vestibule $\V$ consists of the matrices $P>0$
for which $P=QK_{ij}(s)$ for some $Q\in\T$, some $i\neq j$, and some $0 < s <1$. 

The narrow
hall $\NH$ consists of the matrices $P\notin\T\cup\V$ for which
$P=QK_{ij}(s)$ for some $Q\in\V$, some $i\neq j$, and some $0<s<1$.
\end{definition}


Let $P=(p_{ij})_{ij}$ and write $D=\det P$.  If $M_{ij}$ is the unsigned
minor of $P$ obtained by deleting row $i$ and column $j$, define the modified minors
by $T_{ii}=M_{ii}$ and $T_{ij}=(-1)^{i+j-1}M_{ij}$ for $i\neq j$.  Thus,
for a permutation $(i,j,k)$ of $(1,2,3)$,
\[
T_{ij}=p_{ji}p_{kk}-p_{jk}p_{ki},
\qquad
T_{ii}=p_{jj}p_{kk}-p_{jk}p_{kj}.
\]
We frequently use  that
\[
\det(P)=p_{jj}T_{jj}-p_{ji}T_{ji}-p_{jk}T_{jk}
\]
for any permutation $(i,j,k)$.

Set
\[
G(P)=p_{11}p_{22}p_{33}-D.
\]
Also put
\begin{align*}
V_{ii}(P) &=p_{ii}T_{ii}-D,\\
V_{ij}(P) &=p_{ii}p_{jj}T_{ij}-p_{ji}D
\quad(i\neq j).
\end{align*}

For $c\geq0$, let
\[
Z_{ij}(c)=I+cE_{ii}-cE_{ij}.
\]
If
$s=c/(1+c)$, then $Z_{ij}(c)=K_{ij}(s)^{-1}$.  The matrix
$PZ_{ij}(c)$ is nonnegative precisely for
$0\leq c\leq\bar c_{ij}$, where
\[
\bar c_{ij}=\min_{1\leq\ell\leq3}\frac{p_{\ell j}}{p_{\ell i}}.
\]

We define Frydman's quadratic for the ordered pair $(i,j)$ as
\[
q_{ij}^P(c)=\alpha_{ij}c^2+\beta_{ij}c+\gamma_{ij}
\]
where
\begin{equation}\label{frydmancoeffs}
\alpha_{ij}=p_{ji}p_{ii}T_{jj},
\qquad
\beta_{ij}=p_{ii}(D-p_{jj}T_{jj}-p_{ji}T_{ji}),
\qquad
\gamma_{ij}=V_{ji}(P).
\end{equation}
A direct calculation gives
$V_{ji}(PZ_{ij}(c))=(1+c)q_{ij}^P(c)$.

\begin{theorem}[Frydman's $R_6$ test]
\label{thm:Frydman-finite-move}
Let $P$ be a $3\times 3$ row-stochastic matrix with $D>0$.
\begin{enumerate}[(i)]
\item If $P$ has a zero entry, 
\[
P\in\T\Longleftrightarrow G(P)\geq 0.
\]
In that case $P\in R_5$, and $G(P)=0$ implies $P\in R_4$.

\item If $P>0$, then $P\in\V$ if and only if
$\max_{i,j}V_{ij}(P)\geq0$.  More generally,
\[
P\in\T\cup\V
\quad\Longleftrightarrow\quad
\max_{i,j}V_{ij}(P)\geq0.
\]

\item If $P\notin\T\cup\V$, then $P\in\NH$ if and only if, for some
ordered pair $(i,j)$, the quadratic $q_{ij}^P$ has a root in
$[0,\bar c_{ij}]$.

\item One has 
\[
R_6=\T\cup\V\cup\NH.
\]
\end{enumerate}
\end{theorem}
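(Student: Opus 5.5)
The plan is to work backwards from $P$ by stripping off the rightmost factor, since $P=QK_{ij}(s)$ is equivalent to $Q=PZ_{ij}(c)$ with $c=s/(1-s)$. Right multiplication by $Z_{ij}(c)$ is a column operation: column $i$ is multiplied by $1+c$ and $c$ times column $i$ is subtracted from column $j$. It keeps row sums equal to $1$ and preserves $\det$ up to the positive factor $1+c$. Along the ray $c\mapsto PZ_{ij}(c)$, the matrix stays nonnegative exactly for $c\in[0,\bar c_{ij}]$. At $c=\bar c_{ij}$ some entry of column $j$ vanishes. So each of the regions $\V,\NH$ is reached from the previous one by a one-parameter family, and membership should be decided by sign conditions at suitable points of that family.

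\textbf{Step 1, part (i).} For $P$ with a zero entry and $D>0$, I would use the row permutation symmetry of the problem to reduce to finitely many zero patterns, say $p_{12}=0$ up to relabelling. In each pattern I would give an explicit factorization of $P$ into at most five e.r.s.\ matrices. The parameters would be solved in closed form by peeling moves: the row with the zero must be a convex combination of the other rows along a pull-in path that never leaves the span of the two relevant vertices. The condition that all the parameters lie in $[0,1)$ should reduce to $G(P)\ge0$. When $G(P)=0$, one of the parameters degenerates, and that factor can be dropped, giving $R_4$. For the converse, I would show that $G\ge 0$ is preserved under left multiplication by e.r.s.\ matrices among matrices with a zero entry, or at least that it is necessary along any pull-in path ending at a zero pattern. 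I would try to prove this by computing $G(K_{ij}(s)P)-(1-s)G(P)$ and checking its sign.

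\textbf{Step 2, parts (ii) and (iii).} For $P>0$, I would compute how $G$ changes when a factor is peeled. Along $c\mapsto PZ_{ij}(c)$, the first zero appears at $c=\bar c_{ij}$, in some entry $(\ell,j)$. Evaluating $G(PZ_{ij}(\bar c_{ij}))$ should give a positive multiple of one of the $V_{\cdot\cdot}(P)$; this identity is to be verified by direct expansion using the modified minors $T$ and the expansion $\det P=p_{jj}T_{jj}-p_{ji}T_{ji}-p_{jk}T_{jk}$. Together with (i), this yields $P\in\V$ iff $\max V_{ij}(P)\ge0$.

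For (iii), I would apply (ii) to $PZ_{ij}(c)$. By the identity $V_{ji}(PZ_{ij}(c))=(1+c)q^P_{ij}(c)$ and the analogous behaviour of the other $V$'s, the set of $c\in[0,\bar c_{ij}]$ with $PZ_{ij}(c)\in\T\cup\V$ is detected by a sign change of $q_{ij}^P$. Since $P\notin\T\cup\V$, all $V(P)<0$, so $q_{ij}^P(0)<0$. Continuity then makes entry into $\T\cup\V$ equivalent to a root of $q_{ij}^P$ in $[0,\bar c_{ij}]$. The delicate point is ruling out that another $V_{k\ell}$ becomes nonnegative first, so that $q_{ij}$ is the only one that needs checking. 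I expect that a monotonicity computation in $c$ handles this.

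\textbf{Step 3, part (iv), which I expect to be the main obstacle.} The inclusions $\T\subset R_5$ and $\V\subset R_6$ are immediate. For $\NH$, the naive count gives $R_7$. To improve it, I would use the first root $c_0$ of $q_{ij}^P$. At $c_0$ we have $V_{ji}(PZ_{ij}(c_0))=0$. By the boundary identity of Step 2, peeling $PZ_{ij}(c_0)$ at its own critical $\bar c$ then lands on a zero-pattern matrix with $G=0$, which lies in $R_4$. Hence $PZ_{ij}(c_0)\in R_5$ and $P\in R_6$.

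For the reverse inclusion $R_6\cap\{P>0\}\subset\T\cup\V\cup\NH$, take a factorization $P=K_6\cdots K_1$ and argue inductively. Let $P_m$ denote the partial product of the last $m$ factors, where the $K$'s here are ordered as in the definition of $R_6$. Either one of the $P_m$ has a zero entry, or else we must show a positive matrix in $R_4$ already lies in $\T\cup\V$. The latter requires an auxiliary statement that $R_4\cap\{P>0\}\subset\V$, and I expect this bookkeeping to be the hardest part of the argument.
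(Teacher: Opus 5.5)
The paper does not prove this theorem; all four parts are quoted from Frydman, so your outline has to stand on its own. It has a genuine gap in the reverse inclusion $R_6\subseteq\T\cup\V\cup\NH$ of (iv): your proposed reduction is off by one.

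\textbf{The main gap: the reverse inclusion in (iv).} Any product of three e.r.s.\ matrices has a zero entry. Every row must move, so each row moves exactly once, and the row moved first then has only two states in its support. Hence $R_4\cap\{P>0\}\subseteq\V$ is immediate, and that is not where the difficulty lies. But knowing $K_6K_5K_4K_3\in\V$ only places $P=(K_6K_5K_4K_3)K_2K_1$ three moves away from $\T$. Since $\V$ is not stable under further moves (this is exactly what $\NH$ records), nothing follows.

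What would suffice is that every positive matrix in $R_5$ lies in $\V$. Then $P=P_5K_1$ with $P_5=K_6\cdots K_2\in\T\cup\V$ gives the claim. That lemma is where the work is. If one of the two four-move windows of a five-move product has a zero entry, you conclude by the first- or last-move description of $\V$; the last-move description needs (ii) and transposition. But some patterns have both windows positive, such as $(1,2),(3,1),(2,3),(1,2),(3,1)$ in order of application. There a nonnegative $V$ must be produced by computation: for $X_{31}(w)X_{12}(s)X_{23}(t)X_{31}(u)X_{12}(v)$ one finds $D=1$, $T_{22}=1$ and $V_{22}=tuv>0$.

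\textbf{Other steps that rest on unproved claims.}

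In (iii), the point you defer is the whole content: that along $c\mapsto PZ_{ij}(c)$ no $V$ other than $V_{ji}$ reaches $0$ first. This is not a plain monotonicity fact. For instance, $V_{ki}(PZ_{ij}(c))=(1+c)\bigl[V_{ki}(P)+c\,p_{ii}p_{kk}T_{kj}(P)\bigr]$ has the sign of a bracket that increases with $c$ when $T_{kj}(P)>0$. Each such crossing must therefore be excluded by a separate argument at the crossing point.

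Step 3 uses the same fact implicitly. The peel realizing $V_{ji}=0$ (type $(j,k)$) applied to $PZ_{ij}(c_0)$ lands on a matrix with $G=(1+\bar c)\max\bigl(V_{ii}(PZ_{ij}(c_0)),0\bigr)$. This is $0$ only if $V_{ii}(PZ_{ij}(c_0))\le 0$. For $\NH\subseteq R_6$ you can avoid (iii) entirely: take instead the first $c$ at which $\max_{a,b}V_{ab}(PZ_{ij}(c))$ vanishes, and the corresponding peel then lands with $G=0$.

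In Step 2, $G(PZ_{ij}(\bar c_{ij}))=(1+\bar c)\bigl[p_{ii}p_{kk}(p_{jj}-\bar c\,p_{ji})-D\bigr]$ equals $(1+\bar c)$ times $V_{kk}$, $V_{ik}/p_{ki}$ or $-D$, according as the zero forms in row $i$, $k$ or $j$. The equivalence with $\max V\ge 0$ comes from this bracket decreasing in $c$.

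You also never treat the clause of (ii) for matrices with a zero entry. It reduces to $V_{ii}\le G$, $V_{ij}\le p_{ji}G$, and $V_{kk}=G$ when $p_{ij}=0$, plus a check of the case $p_{ji}=0$. As stated it needs a positive diagonal: $p_{11}=p_{21}=0$ gives $V_{12}=0$ while $G=-D<0$.

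Your necessity argument in (i) is right, since $G(K_{ij}(s)P)-(1-s)G(P)=s\,p_{ji}p_{jj}p_{kk}\ge 0$. The sufficiency half of (i), however, is only sketched.
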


Part (i) is  \cite[Theorem 3.1]{FrydmanEmbedding1980}
and \cite[Lemma~2.1]{Frydman1980}.
Parts (ii) and (iii) are the first two steps in the proof of
\cite[Theorem~2.1]{Frydman1980}; the quadratic in (iii) is
\cite[Equation~(2.2)]{Frydman1980}. Part (iv) is
\cite[Theorem~2.1]{Frydman1980}, restated as
\cite[Theorem~1.1]{Frydman1983}. Our $V_{ij}$ polynomials are denominator-free
versions of Frydman's quantities $B(i,j)-\det P$.

\subsection{The homogeneous formulation}

Let
\[
\mathcal M_+=\{A\in M_3(\mathbb R):A\geq0,\ \det A>0\}
\]
and let $\mathcal D_+$ be the group of positive diagonal matrices.  Given $A,B\in \mathcal M_+$, we
write $A\sim B$ if $B=LAR$ for some $L,R\in\mathcal D_+$.

For $i\neq j$ and $t\geq0$, put
\[
X_{ij}(t)=I+tE_{ij}.
\]

We now extend the definitions of the sets $R_n$, $\T$, $\V$, and $\NH$ from the row-stochastic matrices to  $\mathcal M_+$, retaining the same notation. This causes no ambiguity, since the intersections of the enlarged sets with the row-stochastic matrices are precisely the sets defined above.

\begin{definition}
	The set $R_n\subseteq\mathcal M_+$ consists of the matrices $A$ for which
	\[
	A\sim
	X_{i_nj_n}(t_n)\cdots X_{i_1j_1}(t_1)
	\]
	for some $i_\ell\neq j_\ell$ and some $t_\ell\geq0$. Put
	$R_\infty=\bigcup_nR_n$.
	
	The threshold region $\T$ consists of matrices $A\in R_\infty$ with at least one zero entry.

	The vestibule $\V$  consists of the matrices $A>0$ such that $A\sim B X_{ij}(t)$
	for some $B\in\T$, some $i\neq j$, and some $t>0$.
	
	The narrow hall $\NH$ consists of the matrices $A\notin\T\cup\V$ such that $A\sim B X_{ij}(t)$
	for some $B\in\V$, some $i\neq j$, and some $t>0$.
\end{definition}

If $D=\operatorname{diag}(d_1,d_2,d_3)$, then
\[
	DX_{ij}(t)D^{-1}=X_{ij}\left(\frac{d_i}{d_j}t\right).
\]
This allows 
positive diagonal factors to be moved through a finite product of elementary matrices $X_{ij}$.

For $A\in\mathcal M_+$, let $\RowNorm(A)$ be obtained by dividing each
row of $A$ by the row sum.  Let $D_i(\lambda)$ denote the diagonal matrix whose $i$-th diagonal
entry is $\lambda$ and whose other diagonal entries are $1$. Then
\[
K_{ij}(s)=D_i(1-s)X_{ij}\left(\frac{s}{1-s}\right),
\qquad
X_{ij}(t)=D_i(1+t)K_{ij}\left(\frac{t}{1+t}\right).
\]
We readily deduce from these identities that  $A\in\mathcal M_+$ belongs to
$R_n,\T,\V,$ and $\NH$ if and only if $\RowNorm(A)$ belongs to the
corresponding stochastic region.

\begin{proposition}
\label{prop:Rn-relatively-closed}
For every $n$, the set $R_n$ is relatively closed in $\mathcal M_+$.
\end{proposition}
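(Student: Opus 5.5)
The plan is to reduce to the row-stochastic formulation and then argue by compactness, using the determinant to keep the parameters away from the singular boundary $s=1$.

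Let $A_m\in R_n$ with $A_m\to A\in\mathcal M_+$. We must show that $A\in R_n$.

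First, pass to stochastic matrices. Row sums of matrices in $\mathcal M_+$ are positive, because a zero row would force $\det=0$. Hence $\RowNorm$ is continuous on $\mathcal M_+$, and $P_m=\RowNorm(A_m)\to P=\RowNorm(A)$. Also $\det P=\det A/\prod_i(\text{row sums of }A)>0$. By the remark preceding the proposition, $A_m\in R_n$ means $P_m$ lies in the stochastic $R_n$, and it suffices to show the same for $P$. So write
\[
P_m=K_{i^m_nj^m_n}(s^m_n)\cdots K_{i^m_1j^m_1}(s^m_1),\qquad 0\le s^m_\ell<1 .
\]
There are only finitely many index patterns $(i_\ell,j_\ell)_{\ell\le n}$. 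Passing to a subsequence, we may assume the pattern is independent of $m$. Since $[0,1]^n$ is compact, we may further assume that $s^m_\ell\to s_\ell\in[0,1]$ for each $\ell$.

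The only possible obstacle is that some $s_\ell=1$, in which case the limiting factor $K_{ij}(1)$ would not be an allowed e.r.s. matrix. This is ruled out by the determinant. Indeed, $\det K_{ij}(s)=1-s$, since $K_{ij}(s)$ is the identity with row $i$ replaced by a row whose diagonal entry is $1-s$ and which is otherwise supported off the diagonal. Hence
\[
\det P_m=\prod_{\ell=1}^n(1-s^m_\ell)\longrightarrow\prod_{\ell=1}^n(1-s_\ell).
\]
The left side converges to $\det P>0$, so every $s_\ell<1$.

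Finally, matrix multiplication is continuous, so
\[
P=\lim_m P_m=K_{i_nj_n}(s_n)\cdots K_{i_1j_1}(s_1),
\]
with all $s_\ell\in[0,1)$. Thus $P\in R_n$, and therefore $A\in R_n$. This shows that $R_n$ is relatively closed in $\mathcal M_+$.
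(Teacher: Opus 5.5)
Your proof is correct and follows essentially the same route as the paper: compactness of the parameter cube $[0,1]^n$ over finitely many index patterns, the determinant $\prod_\ell(1-s_\ell)$ ruling out a limiting parameter equal to $1$, and transfer to the homogeneous setting through the continuity of $\RowNorm$ on $\mathcal M_+$. The only difference is presentation: you run the compactness step with convergent subsequences, whereas the paper takes the compact image of the cube and intersects it with $\mathcal M_+$.
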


\begin{proof}
We first consider row-stochastic matrices. Fix a sequence of ordered
pairs
\[
(i_1,j_1),\ldots,(i_n,j_n),
\qquad i_k\neq j_k,
\]
and consider the map
\[
[0,1]^n\longrightarrow M_3(\mathbb R),
\qquad
(t_1,\ldots,t_n)\longmapsto
K_{i_nj_n}(t_n)\cdots K_{i_1j_1}(t_1).
\]
This map is continuous, so its image is compact. There are only
finitely many possible sequences of ordered pairs, and hence the union
of these images is compact, and therefore closed.

If one of the parameters $t_k$ is equal to $1$, then the corresponding
factor $K_{i_kj_k}(t_k)$ has determinant zero, and so does the product.
Consequently, after intersecting this finite union with $\mathcal M_+$,
we obtain precisely the row-stochastic matrices in $R_n$. Thus the
row-stochastic part of $R_n$ is relatively closed in $\mathcal M_+$.

Finally, the homogeneous set $R_n$ is the inverse image of its
row-stochastic part under the continuous row-normalization map
$\RowNorm\colon\mathcal M_+\to\mathcal M_+$. Hence $R_n$ is relatively
closed in $\mathcal M_+$.
\end{proof}

The polynomials $G,T_{ij},V_{ij}$ are defined on $\mathcal M_+$ by the
same formulas given above.  For $A=(a_{ij})_{ij}>0$ and $i\neq j$, put
\[
C_{ij}(A)=\min_{1\leq\ell\leq3}
\frac{a_{\ell j}}{a_{\ell i}},
\]
and define
\[
q_{ij}^A(c)=\alpha_{ij}c^2+\beta_{ij}c+\gamma_{ij},
\]
where
\[
\alpha_{ij}=a_{ji}a_{ii}T_{jj},
\qquad
\beta_{ij}
=a_{ii}\bigl(\det A-a_{jj}T_{jj}-a_{ji}T_{ji}\bigr),
\qquad
\gamma_{ij}=V_{ji}(A).
\]
Thus $AZ_{ij}(c)$ is nonnegative precisely when
\[
0\leq c\leq C_{ij}(A),
\]
and
\[
V_{ji}\bigl(AZ_{ij}(c)\bigr)=(1+c)q_{ij}^A(c).
\]

We have the following:

\begin{lemma}
\label{lem:diagonal-transpose-covariance}
Let $L=\operatorname{diag}(\lambda_1,\lambda_2,\lambda_3)$ and
$R=\operatorname{diag}(\rho_1,\rho_2,\rho_3)$ be positive, and put
$\lambda=\det L$ and $\rho=\det R$. Let $A\in \mathcal M_+$ and put $B=LAR$. Then
\[
T_{ij}(B)=\frac{\lambda\rho}{\lambda_i\rho_j}T_{ij}(A),
\qquad
G(B)=\lambda\rho G(A),
\]
\[
V_{ii}(B)=\lambda\rho V_{ii}(A),
\qquad
V_{ij}(B)=\lambda\rho\lambda_j\rho_iV_{ij}(A)
\quad(i\neq j),
\]
\[
C_{ij}(B)=\frac{\rho_j}{\rho_i}C_{ij}(A),
\]
and
\[
q_{ij}^{B}(c)
=\lambda\rho\lambda_i\rho_j\,
 q_{ij}^A\!\left(\frac{\rho_i}{\rho_j}c\right).
\]
Moreover,
$T_{ij}(A^t)=T_{ji}(A)$ and
$V_{ij}(A^t)=V_{ji}(A)$.
\end{lemma}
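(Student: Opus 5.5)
This is a direct computation: write $b_{k\ell}=\lambda_k a_{k\ell}\rho_\ell$, track how each building block scales, and read off every formula from those scalings. The basic facts are that each $2\times2$ product appearing in a minor scales uniformly, that $\det B=\lambda\rho\det A$, and that $b_{11}b_{22}b_{33}=\lambda\rho\,a_{11}a_{22}a_{33}$.

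For $T_{ij}$, take the permutation $(i,j,k)$. Then
\[
b_{ji}b_{kk}=\lambda_j\lambda_k\rho_i\rho_k\,a_{ji}a_{kk},
\qquad
b_{jk}b_{ki}=\lambda_j\lambda_k\rho_k\rho_i\,a_{jk}a_{ki}.
\]
Both terms carry the factor $\lambda_j\lambda_k\rho_i\rho_k=\lambda\rho/(\lambda_i\rho_j)$, which gives the formula for $T_{ij}(B)$. The case $T_{ii}$ is the same computation.

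From this, $G(B)=\lambda\rho\,G(A)$ is immediate. For the diagonal $V$'s,
\[
b_{ii}T_{ii}(B)=\lambda_i\rho_i\cdot\frac{\lambda\rho}{\lambda_i\rho_i}\,a_{ii}T_{ii}(A)=\lambda\rho\,a_{ii}T_{ii}(A),
\]
so $V_{ii}(B)=\lambda\rho\,V_{ii}(A)$. For $i\neq j$,
\[
b_{ii}b_{jj}T_{ij}(B)=\lambda_i\rho_i\lambda_j\rho_j\cdot\frac{\lambda\rho}{\lambda_i\rho_j}\,a_{ii}a_{jj}T_{ij}(A)=\lambda\rho\,\lambda_j\rho_i\,a_{ii}a_{jj}T_{ij}(A),
\]
and $b_{ji}\det B=\lambda_j\rho_i\lambda\rho\,a_{ji}\det A$. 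Both terms carry the same factor, so $V_{ij}(B)=\lambda\rho\lambda_j\rho_i\,V_{ij}(A)$.

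For $C_{ij}$, each ratio satisfies $b_{\ell j}/b_{\ell i}=(\rho_j/\rho_i)\,a_{\ell j}/a_{\ell i}$, since the factor $\lambda_\ell$ cancels. Taking the minimum over $\ell$ gives $C_{ij}(B)=(\rho_j/\rho_i)\,C_{ij}(A)$.

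For the quadratic, compute the scaling of each coefficient with the rules above:
\[
\alpha_{ij}(B)=\lambda_j\rho_i\cdot\lambda_i\rho_i\cdot\frac{\lambda\rho}{\lambda_j\rho_j}\,\alpha_{ij}(A)=\lambda\rho\,\lambda_i\rho_j\Bigl(\frac{\rho_i}{\rho_j}\Bigr)^{2}\alpha_{ij}(A).
\]
For $\beta_{ij}$, the three terms $\det B$, $b_{jj}T_{jj}(B)$ and $b_{ji}T_{ji}(B)$ each scale by $\lambda\rho$. Multiplying by $b_{ii}=\lambda_i\rho_i a_{ii}$ gives
\[
\beta_{ij}(B)=\lambda\rho\,\lambda_i\rho_i\,\beta_{ij}(A)=\lambda\rho\,\lambda_i\rho_j\cdot\frac{\rho_i}{\rho_j}\,\beta_{ij}(A).
\]
Finally, $\gamma_{ij}(B)=V_{ji}(B)=\lambda\rho\lambda_i\rho_j\,\gamma_{ij}(A)$ by the $V$ formula with $i$ and $j$ swapped. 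These three scalings are exactly the statement
\[
q^B_{ij}(c)=\lambda\rho\lambda_i\rho_j\,q^A_{ij}\bigl(\tfrac{\rho_i}{\rho_j}c\bigr).
\]

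For the transpose, write $A^t=(a'_{k\ell})$ with $a'_{k\ell}=a_{\ell k}$. Then
\[
T_{ij}(A^t)=a'_{ji}a'_{kk}-a'_{jk}a'_{ki}=a_{ij}a_{kk}-a_{kj}a_{ik}.
\]
The right-hand side is the formula for $T_{ji}(A)$, which for the permutation $(j,i,k)$ reads $a_{ij}a_{kk}-a_{ik}a_{kj}$; the two agree. The transpose fixes the diagonal entries and the determinant, so
\[
V_{ij}(A^t)=a_{ii}a_{jj}T_{ji}(A)-a_{ij}\det A=V_{ji}(A).
\]

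There is no real obstacle here. The only step that needs care is the bookkeeping of the index-dependent factors in $\beta_{ij}$, where one has to confirm that all three terms inside the parentheses scale identically by $\lambda\rho$.
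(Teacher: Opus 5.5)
Your proof is correct and takes the same approach as the paper. You substitute $b_{k\ell}=\lambda_k a_{k\ell}\rho_\ell$ into each defining formula and track the scaling factors. The paper's one-line proof (``deleting row $i$ and column $j$'', ``by substitution'', ``from its three coefficients'') is a compressed version of exactly this bookkeeping, which you have carried out in full.
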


\begin{proof}
The formula for $T_{ij}$ follows by deleting row $i$ and column $j$.
The formulas for $G$ and $V_{ij}$ follow by substitution. The formula
for $q_{ij}$ follows from its three coefficients. Transposition
interchanges the corresponding minors.
\end{proof}

\begin{theorem}
	\label{thm:homogeneous-finite-move}
	Let $A\in\mathcal M_+$.
	\begin{enumerate}[(i)]
		\item
		If $A$ has a zero entry, then 
		\[
		A\in\T\Longleftrightarrow G(A)\geq0.
		\]
		In that case $A\in R_5$, and $G(A)=0$ implies $A\in R_4$.
		
		\item
		If $A>0$, then $A\in\V$ if and only if $\max_{i,j}V_{ij}(A)\geq0$.
		More generally, if $A\in \mathcal M_+$, then
		\[
		A\in\T\cup\V
		\quad\Longleftrightarrow\quad
		\max_{i,j}V_{ij}(A)\geq0.
		\]
		
		\item
		If $A\notin\T\cup\V$, then $A\in\NH$ if and only if, for some
		ordered pair $(i,j)$ with $i\neq j$, the quadratic $q_{ij}^A$
		has a root in $[0,C_{ij}(A)]$.

		\item
		One has
		\[
		R_6=\T\cup\V\cup\NH.
		\]
	\end{enumerate}
\end{theorem}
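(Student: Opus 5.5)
The plan is to reduce \Cref{thm:homogeneous-finite-move} to Frydman's $R_6$ test (\Cref{thm:Frydman-finite-move}) by row normalization, and then use \Cref{lem:diagonal-transpose-covariance} to move each polynomial criterion between $A$ and $P=\RowNorm(A)$. Write $P=LA$ with $L=\operatorname{diag}(1/r_1,1/r_2,1/r_3)$, where $r_i>0$ are the row sums of $A$; these are positive because $\det A>0$ forces every row to be nonzero. Then $P$ is row-stochastic with $\det P>0$, and $P$ has a zero entry exactly when $A$ does. We already observed that $A$ lies in $R_n$, $\T$, $\V$, or $\NH$ if and only if $P$ lies in the corresponding stochastic region. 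So each part of the statement for $A$ is equivalent to the same part for $P$, provided the sign conditions and root conditions transfer.

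For (i), \Cref{lem:diagonal-transpose-covariance} with $R=I$ gives $G(P)=\lambda G(A)$ with $\lambda>0$. Hence $G(A)\ge0$ if and only if $G(P)\ge0$, and $G(A)=0$ if and only if $G(P)=0$. The membership statements $A\in R_5$ and $A\in R_4$ transfer because $R_n$ is invariant under $\sim$. For (ii), the lemma gives $V_{ii}(P)=\lambda V_{ii}(A)$ and $V_{ij}(P)=\lambda\lambda_j V_{ij}(A)$. Each $V_{ij}$ is rescaled by a positive factor, so $\max V_{ij}(A)\ge0$ if and only if $\max V_{ij}(P)\ge0$. Both formulations of (ii) then follow from the stochastic ones, using again that $A>0$ if and only if $P>0$.

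For (iii), take $R=I$, so $\rho_i=1$. The lemma gives $q^{P}_{ij}(c)=\lambda\lambda_i\,q^A_{ij}(c)$ with the same argument $c$, and $C_{ij}(P)=C_{ij}(A)$. One point needs care. The stochastic test uses $\bar c_{ij}=\min_\ell p_{\ell j}/p_{\ell i}$, which is defined even when some $p_{\ell i}=0$, whereas $C_{ij}$ was defined for $A>0$. In (iii), however, $A\notin\T\cup\V$, and a matrix with a zero entry and $\det>0$ lies in $\T$ or fails to lie in $R_\infty$. In the latter case the statement is read with the natural convention (ratios with zero denominator omitted), and then $\bar c_{ij}=C_{ij}$ still holds. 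With that, $q^A_{ij}$ has a root in $[0,C_{ij}(A)]$ exactly when $q^P_{ij}$ has a root in $[0,\bar c_{ij}]$.

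Part (iv) is immediate: $A\in R_6$ if and only if $P\in R_6=\T\cup\V\cup\NH$, which holds if and only if $A$ lies in the homogeneous union. The main obstacle is bookkeeping rather than mathematics. One must check that the scaling exponents in \Cref{lem:diagonal-transpose-covariance} are positive factors in each case, and that the domain convention for $C_{ij}$ matches $\bar c_{ij}$. I would verify the identity for $q_{ij}$ coefficientwise, using $T_{jj}(P)=(\lambda/\lambda_j)T_{jj}(A)$, $T_{ji}(P)=(\lambda/\lambda_j)T_{ji}(A)$, and $\det P=\lambda\det A$.
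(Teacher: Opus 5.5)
Your proposal is correct and follows the paper's proof. You row-normalize and note that $A$ and $\RowNorm(A)$ lie in the same regions. Then you use Lemma~\ref{lem:diagonal-transpose-covariance} with $R=I$ to see that $G$, the $V_{ij}$, and $q_{ij}$ change only by positive factors while $C_{ij}$ is unchanged, and invoke Theorem~\ref{thm:Frydman-finite-move}. Your remark on reading $C_{ij}$ for matrices with a zero entry outside $R_\infty$ is a harmless bit of extra care that the paper leaves implicit.
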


\begin{proof}
	Let
	$P=\RowNorm(A)$.
	Then $P$ is row-stochastic, $A\sim P$, and
	$A$ belongs to any one of the regions $R_n,\T,\V,$ and $\NH$ if and
	only if $P$ belongs to the corresponding stochastic region.
	
	By Lemma~\ref{lem:diagonal-transpose-covariance}, diagonal equivalence
	preserves the signs of $G$ and of all the $V_{ij}$. Moreover, row
	normalization does not change the quotients occurring in $C_{ij}$, so
	\[
	C_{ij}(P)=C_{ij}(A).
	\]
	The same lemma shows that $q_{ij}^P$ is a positive scalar multiple of
	$q_{ij}^A$. Thus the two quadratics have the same roots. The result now
	follows from Theorem~\ref{thm:Frydman-finite-move}.
\end{proof}

\begin{proposition}
\label{prop:first-last-symmetry}
Each of the regions $R_n$, $\T$, $\V$, and $\NH$ is invariant under
transposition. For $A>0$, one has $A\in\V$ if and only if, for some
$B\in\T$, $i\neq j$, and $t>0$,
\[
A\sim BX_{ij}(t),
\]
and this is also equivalent to
\[
A\sim X_{ij}(t)B.
\]
Likewise, $A\in\NH$ if and only if $A\notin\T\cup\V$ and, for some
$B\in\V$, $i\neq j$, and $t>0$,
\[
A\sim BX_{ij}(t),
\]
equivalently, $A\sim X_{ij}(t)B$.
\end{proposition}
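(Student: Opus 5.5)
Transposition invariance of $R_n$ comes first. If $A\sim X_{i_nj_n}(t_n)\cdots X_{i_1j_1}(t_1)$, say $A=L\,X_{i_nj_n}(t_n)\cdots X_{i_1j_1}(t_1)\,R$, then
\[
A^t=R\,X_{i_1j_1}(t_1)^t\cdots X_{i_nj_n}(t_n)^t\,L=R\,X_{j_1i_1}(t_1)\cdots X_{j_ni_n}(t_n)\,L,
\]
since $X_{ij}(t)^t=X_{ji}(t)$. This is again a product of $n$ elementary factors up to diagonal equivalence, so $A^t\in R_n$. Consequently $R_\infty$ is invariant as well. Transposition preserves having a zero entry, so $\T$ is invariant. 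For $\V$ and $\NH$ I would not argue through the factorization directly. Instead I would use the polynomial characterizations in Theorem~\ref{thm:homogeneous-finite-move}, which makes the argument independent of which side the extra factor sits on.

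The key observation is that by Lemma~\ref{lem:diagonal-transpose-covariance}, $V_{ij}(A^t)=V_{ji}(A)$, so $\max_{i,j}V_{ij}(A^t)=\max_{i,j}V_{ij}(A)$. By Theorem~\ref{thm:homogeneous-finite-move}(ii), $\T\cup\V$ is therefore transposition invariant. Since $\V=(\T\cup\V)\cap\{A>0\}$ and positivity is preserved, $\V$ is invariant. For $\NH$, I would use $R_6=\T\cup\V\cup\NH$ from part (iv), together with the facts that this union is disjoint (by definition $\NH$ excludes $\T\cup\V$, and $\T$, $\V$ are disjoint by positivity). This gives $\NH=R_6\setminus(\T\cup\V)$, a difference of transposition-invariant sets, hence invariant. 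This avoids any analysis of $q_{ij}$ under transposition.

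For the left/right equivalences, take $A>0$. Suppose $A\sim BX_{ij}(t)$ with $B\in\T$ and $t>0$; this is exactly $A\in\V$ by definition. Now suppose instead $A\sim X_{ij}(t)B$ with $B\in\T$. Transposing gives $A^t\sim B^tX_{ji}(t)$ with $B^t\in\T$, so $A^t\in\V$ and hence $A\in\V$ by invariance. Conversely, if $A\in\V$ then $A^t\in\V$, so $A^t\sim B'X_{ij}(t)$ with $B'\in\T$. Transposing gives $A\sim X_{ji}(t)B'^t$ with $B'^t\in\T$, which is the left form. The same argument applies verbatim to $\NH$: the condition $A\notin\T\cup\V$ is transposition invariant, and $B\in\V$ if and only if $B^t\in\V$.

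The only point needing care is the invariance of $\V$ and $\NH$. The direct route would be to convert a right factor into a left factor, and that fails because $X_{ij}$ does not commute with $B$. Instead I rely on Frydman's characterizations in Theorem~\ref{thm:homogeneous-finite-move}(ii),(iv). Part (iv) must be read as a statement about the right-multiplication definitions, which is how $\V$ and $\NH$ are defined, so there is no circularity.
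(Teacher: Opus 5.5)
Your proposal is correct and follows essentially the same route as the paper. Both arguments transpose factorizations (using $X_{ij}(t)^t=X_{ji}(t)$) for $R_n$ and $\T$, and combine $V_{ij}(A^t)=V_{ji}(A)$ with Theorem~\ref{thm:homogeneous-finite-move}(ii) for $\V$. Both then obtain $\NH=R_6\setminus(\T\cup\V)$ from part (iv), and transpose the first-move descriptions to get the last-move ones.
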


\begin{proof}
Since $X_{ij}(t)^t=X_{ji}(t)$, transposition preserves $R_n$
and $\T$.  Lemma~\ref{lem:diagonal-transpose-covariance} and
Theorem \ref{thm:homogeneous-finite-move} (ii) give the invariance of $\V$;
then $\NH=R_6\setminus(\T\cup\V)$ is invariant.  Transposing the
first-move descriptions of $\V$ and $\NH$ gives their last-move descriptions.
\end{proof}

Thus the right inverse $AZ_{ij}(c)$ undoes a first move, while
$X_{ij}(-t)A$ undoes a last move.  We use either description as
convenient. If $P$ is row-stochastic, then $\RowNorm(P^t)$ represents the
transpose class.  Hence, the row-stochastic regions $R_n,\T,\V,\NH$ are invariant under
$P\mapsto\RowNorm(P^t)$.

\begin{corollary}
\label{cor:no-exit-vestibule}
If $A\in\T\cup \V$, then $X_{ij}(t)A\in R_6$ for every $i\neq j$ and
$t\geq0$.  Consequently, a first exit from $R_6$ along a pull-in path
must occur through $\NH$.
\end{corollary}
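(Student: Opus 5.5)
We need to show: if $A\in\T\cup\V$ then $X_{ij}(t)A\in R_6$, and consequently a first exit from $R_6$ along a pull-in path happens through $\NH$.

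\textbf{Case $A\in\T$, $t>0$.} I would use the characterization of $\V$ in Proposition~\ref{prop:first-last-symmetry}: for $A'>0$, $A'\in\V$ iff $A'\sim X_{ij}(t)B$ with $B\in\T$, $t>0$. Set $A'=X_{ij}(t)A$.
\begin{itemize}
\item If $A'>0$, take $B=A$ to get $A'\in\V\subseteq R_6$.
\item If $A'$ still has a zero entry, then $A'\in R_\infty$, since $A\in R_\infty$ and we multiplied by one more elementary factor. Hence $A'\in\T\subseteq R_6$ by definition.
\end{itemize}
We also need $\det A'>0$, which holds because $\det X_{ij}(t)=1$. The case $t=0$ is trivial.

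\textbf{Case $A\in\V$.} Here $A>0$, so $A'=X_{ij}(t)A>0$ as well, and $A'\notin\T$. If $A'\in\V$ we are done. Otherwise $A'\notin\T\cup\V$, and by the last-move description in Proposition~\ref{prop:first-last-symmetry} (with $B=A\in\V$, $t>0$) we get $A'\in\NH$. Either way $A'\in R_6$ by Theorem~\ref{thm:homogeneous-finite-move}(iv).

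\textbf{The consequence.} Consider a pull-in path leaving $R_6$. Let $A$ be the last point of the path in $R_6$, which is well defined because $R_6$ is relatively closed by Proposition~\ref{prop:Rn-relatively-closed}, and the exit occurs just after $A$ along a single move $s\mapsto X_{ij}(s)A$ (after diagonal normalization). If $A$ were in $\T\cup\V$, the first part would keep every point of the move inside $R_6$, a contradiction. So $A\in R_6\setminus(\T\cup\V)=\NH$.

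The only delicate step is this last one. It needs the stochastic pull-in move $K_{ij}(s)P$ to be $\sim$-equivalent to $X_{ij}(s/(1-s))P$, which follows from the identity $K_{ij}(s)=D_i(1-s)X_{ij}(s/(1-s))$. It also needs the regions to be invariant under $\sim$, so the homogeneous statement transfers directly to the stochastic setting.
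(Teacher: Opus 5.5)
Your proof is correct and is essentially the paper's argument. The last-move descriptions of $\V$ and $\NH$ in Proposition~\ref{prop:first-last-symmetry} place $X_{ij}(t)A$ in $\T\cup\V$ or in $\V\cup\NH$, and Theorem~\ref{thm:homogeneous-finite-move}(iv) gives membership in $R_6$ and locates the exit point in $\NH$. One small correction: $\T\subseteq R_6$ does not follow from the definition of $\T$, which only requires membership in $R_\infty$; it follows from Theorem~\ref{thm:homogeneous-finite-move}(i) or (iv).
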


\begin{proof}
If $A\in \T$, then 	
the matrix $X_{ij}(t)A$ is either in $\T$ or in $\V$, by the last-move description of $\V$. 
If $A\in \V$, then $X_{ij}(t)A$ is either in $\V$ or in $\NH$, by the last-move
description of $\NH$.  For the last assertion, apply
Theorem \ref{thm:homogeneous-finite-move} (iv).  
\end{proof}

\begin{remark}
	Let us return to the geometric picture of attainable triangles and describe
	the regions $\T$, $\V$, and $\NH$ in these terms.
	A nondegenerate triangle
	$A'B'C'$ contained in $ABC$ lies in the threshold region
	(i.e., $P_{A'B'C'}\in\T$) if it is attainable and at least one of its
	vertices lies on an edge of $ABC$.
	The triangle $A'B'C'$ lies in the vestibule if it is contained strictly
	inside $ABC$ and can be obtained from a threshold triangle by a single
	pull-in move. Finally, $A'B'C'$ lies in the narrow hall if it does not
	belong to the vestibule and can be obtained from a vestibule triangle
	by a single pull-in move.
\end{remark}

\subsection{Frydman's discriminant and the reduced quadratic test}

We now analyze the discriminant of the Frydman quadratic.  For
$A>0$ and $i\neq j$, put
\[
H_{ij}(A)=\operatorname{disc}(q_{ij}^A).
\]
When the matrix $A$ is understood, we write simply $H_{ij}$ and
$C_{ij}$.

\begin{lemma}
\label{lem:Frydman-discriminant-factorization}
For every permutation $(i,j,k)$ of $(1,2,3)$,
\[
H_{ij}=a_{ii}T_{jk}\widetilde H_{ij}
\]
where
\[
\widetilde H_{ij}=4a_{ji}a_{ik}D+a_{ii}a_{jk}^2T_{jk}.
\]
\end{lemma}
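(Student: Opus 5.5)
We must show $\beta_{ij}^2-4\alpha_{ij}\gamma_{ij}=a_{ii}T_{jk}\widetilde H_{ij}$. This is a polynomial identity in the nine entries of $A$, and we will verify it by direct expansion, organized so that the factor $a_{ii}T_{jk}$ appears early. By the permutation symmetry of the definitions (relabeling indices simultaneously permutes rows and columns, which preserves all the formulas), it suffices to treat one ordered pair, say $(i,j,k)=(1,2,3)$.

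\textbf{Step 1: simplify $\beta_{ij}$.} We use the cofactor expansion along row $j$ recorded earlier:
\[
D=a_{jj}T_{jj}-a_{ji}T_{ji}-a_{jk}T_{jk}.
\]
Substituting gives
\[
\beta_{ij}=a_{ii}\bigl(D-a_{jj}T_{jj}-a_{ji}T_{ji}\bigr)=-a_{ii}\bigl(2a_{ji}T_{ji}+a_{jk}T_{jk}\bigr).
\]
Hence
\[
\beta_{ij}^2=a_{ii}^2\bigl(4a_{ji}^2T_{ji}^2+4a_{ji}a_{jk}T_{ji}T_{jk}+a_{jk}^2T_{jk}^2\bigr).
\]

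\textbf{Step 2: expand $4\alpha_{ij}\gamma_{ij}$.} We have $\alpha_{ij}=a_{ji}a_{ii}T_{jj}$ and $\gamma_{ij}=V_{ji}=a_{jj}a_{ii}T_{ji}-a_{ij}D$, so
\[
4\alpha_{ij}\gamma_{ij}=4a_{ji}a_{ii}T_{jj}\bigl(a_{ii}a_{jj}T_{ji}-a_{ij}D\bigr).
\]
Note that $a_{jj}T_{jj}=D+a_{ji}T_{ji}+a_{jk}T_{jk}$. Therefore
\[
4\alpha_{ij}\gamma_{ij}=4a_{ji}a_{ii}^2T_{ji}\bigl(D+a_{ji}T_{ji}+a_{jk}T_{jk}\bigr)-4a_{ji}a_{ii}a_{ij}T_{jj}D.
\]

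\textbf{Step 3: subtract.} The terms $4a_{ii}^2a_{ji}^2T_{ji}^2$ and $4a_{ii}^2a_{ji}a_{jk}T_{ji}T_{jk}$ cancel, leaving
\[
H_{ij}=a_{ii}^2a_{jk}^2T_{jk}^2+4a_{ii}a_{ji}D\bigl(a_{ij}T_{jj}-a_{ii}T_{ji}\bigr).
\]
It therefore remains to prove the $2\times2$-minor relation
\[
a_{ij}T_{jj}-a_{ii}T_{ji}=a_{ik}T_{jk}.
\]
Once this holds, factoring out $a_{ii}T_{jk}$ gives exactly $a_{ii}T_{jk}\bigl(a_{ii}a_{jk}^2T_{jk}+4a_{ji}a_{ik}D\bigr)$.

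\textbf{Step 4: the minor relation.} Write $T_{jj}=a_{ii}a_{kk}-a_{ik}a_{ki}$, $T_{ji}=a_{ij}a_{kk}-a_{ik}a_{kj}$ and $T_{jk}=a_{kj}a_{ii}-a_{ki}a_{ij}$, using the paper's convention $T_{ij}=p_{ji}p_{kk}-p_{jk}p_{ki}$ with indices relabeled. Then
\[
a_{ij}T_{jj}-a_{ii}T_{ji}=-a_{ij}a_{ik}a_{ki}+a_{ii}a_{ik}a_{kj}=a_{ik}\bigl(a_{ii}a_{kj}-a_{ij}a_{ki}\bigr)=a_{ik}T_{jk}.
\]
Conceptually, this is the Laplace expansion of a determinant with a repeated row $i$.

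\textbf{Main obstacle.} The only delicate point is bookkeeping of the index conventions for $T_{ji}$ and $T_{jk}$ under the given permutation $(i,j,k)$. The signs in the paper's definition of the modified minors must be tracked carefully in Steps 1 and 4. As a safeguard, I would verify the final identity symbolically for $(1,2,3)$, or check it numerically on a random positive matrix.
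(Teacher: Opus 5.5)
Your proof is correct and takes the same route as the paper, which simply asserts the identity after substituting $\alpha_{ij},\beta_{ij},\gamma_{ij}$ and simplifying. Your organization makes that simplification explicit: the row-$j$ expansion of $D$ rewrites $\beta_{ij}$, and the repeated-row identity $a_{ij}T_{jj}-a_{ii}T_{ji}=a_{ik}T_{jk}$ (with your expressions for $T_{jj},T_{ji},T_{jk}$ matching the paper's convention) finishes it; the preliminary reduction to $(i,j,k)=(1,2,3)$ is harmless but unnecessary, since you work with general indices throughout.
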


\begin{proof}
Substitution of the formulas for $\alpha_{ij}$, $\beta_{ij}$, and
$\gamma_{ij}$, followed by simplification, gives the stated
factorization.
\end{proof}

\begin{proposition}[Reduced Frydman test]
\label{prop:reduced-Frydman-test}
Assume that $A>0$ and $A\notin\V$.  Fix a permutation $(i,j,k)$ and
write $C=C_{ij}$.  Then $q_{ij}^A$ has a root in $[0,C]$ if and only if
\[
T_{jj}<0,
\qquad
\beta_{ij}>0,
\qquad
2\alpha_{ij}C+\beta_{ij}<0,
\qquad
T_{jk}<0,\qquad \widetilde H_{ij}\leq0.
\]
\end{proposition}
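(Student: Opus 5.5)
The plan is to reduce the statement to an elementary fact about a real quadratic $q=q_{ij}^A$ on an interval $[0,C]$, where both endpoint values are negative. The discriminant factorization of Lemma~\ref{lem:Frydman-discriminant-factorization} then converts the remaining condition $H_{ij}\geq 0$ into the last two inequalities.

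\emph{Step 1: $q(0)<0$.} Since $A>0$ and $A\notin\V$, Theorem~\ref{thm:homogeneous-finite-move}(ii) gives $\max V_{\ell m}(A)<0$. In particular
\[
q(0)=\gamma_{ij}=V_{ji}(A)<0.
\]

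\emph{Step 2: $q(C)<0$.} Put $B=AZ_{ij}(C)$. Then $B\geq 0$, $B$ has a zero entry, and $\det B=(1+C)\det A>0$, so $B\in\mathcal M_+$. Suppose $q(C)\geq0$. Then $V_{ji}(B)=(1+C)q(C)\geq0$, so Theorem~\ref{thm:homogeneous-finite-move}(ii) gives $B\in\T\cup\V$. Since $B$ is not positive, this means $B\in\T$. But then
\[
A=BZ_{ij}(C)^{-1}=BK_{ij}(s)
\]
with $s=C/(1+C)$. Writing $K_{ij}(s)$ as a diagonal matrix times $X_{ij}(t)$ with $t>0$, we get $A\sim BX_{ij}(t)$, that is, $A\in\V$, a contradiction. (Here $C>0$ because $A>0$.) Hence $q(C)<0$.

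\emph{Step 3: the elementary criterion.} With $q<0$ at both endpoints, $q$ has a root in $[0,C]$ if and only if the following three things hold:
\begin{itemize}
\item $q$ is strictly concave, i.e.\ $\alpha_{ij}<0$;
\item its vertex $-\beta_{ij}/(2\alpha_{ij})$ lies in $(0,C)$, which for $\alpha_{ij}<0$ is equivalent to $\beta_{ij}>0$ and $2\alpha_{ij}C+\beta_{ij}<0$;
\item $H_{ij}\geq0$.
\end{itemize}
If $\alpha_{ij}\ge 0$, convexity (or linearity) together with negative endpoint values excludes a root. Since $a_{ji}a_{ii}>0$, the condition $\alpha_{ij}<0$ is exactly $T_{jj}<0$.

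\emph{Step 4: the discriminant.} By Lemma~\ref{lem:Frydman-discriminant-factorization}, $H_{ij}=a_{ii}T_{jk}\widetilde H_{ij}$.
\begin{itemize}
\item If $T_{jk}>0$, then $\widetilde H_{ij}>0$, because $D>0$ and all entries are positive. Hence $H_{ij}>0$, which is allowed.
\item If $T_{jk}<0$, then $H_{ij}\ge0$ if and only if $\widetilde H_{ij}\le 0$.
\end{itemize}
So to arrive at the stated list I must show that, under the vertex conditions of Step 3, the case $T_{jk}\geq0$ cannot occur. Equivalently, I must show that $T_{jj}<0$, $\beta_{ij}>0$ and $2\alpha_{ij}C+\beta_{ij}<0$ together force $T_{jk}<0$.

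\emph{Main obstacle.} I expect this compatibility claim to be the hard step. My first attempt will use the expansion
\[
D=a_{jj}T_{jj}-a_{ji}T_{ji}-a_{jk}T_{jk}.
\]
This rewrites $\beta_{ij}=a_{ii}\bigl(D-a_{jj}T_{jj}-a_{ji}T_{ji}\bigr)$ in the form $a_{ii}\bigl(-2a_{ji}T_{ji}-a_{jk}T_{jk}\bigr)$. I will then compare the vertex with the ratio $a_{kj}/a_{ki}$, one of the candidates for $C$. The aim is to show that $T_{jk}\geq0$ forces $-\beta_{ij}/(2\alpha_{ij})\geq a_{kj}/a_{ki}\geq C$, contradicting the vertex condition.

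If that comparison fails, I will fall back on Step 2's style of argument. Take a root $c$ when $T_{jk}\ge 0$, so $AZ_{ij}(c)\in\V$. I would then look for a contradiction with $A\notin\V$ via the last-move description in Proposition~\ref{prop:first-last-symmetry}.
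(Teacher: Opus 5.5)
Your Steps 1--3 and the case split in Step 4 agree with the paper. You also correctly isolate what remains: the conditions $T_{jj}<0$, $\beta_{ij}>0$ and $2\alpha_{ij}C+\beta_{ij}<0$ must force $T_{jk}<0$.

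\textbf{Gap.} That step is left unproved, and the comparison you propose for it is false. Write $\tau=(2a_{ji}T_{ji}+a_{jk}T_{jk})/(2a_{ji}T_{jj})$ for the vertex and use the identity $a_{ki}T_{ji}-a_{kj}T_{jj}=-a_{kk}T_{jk}$. Then
\[
\tau\geq\frac{a_{kj}}{a_{ki}}
\quad\Longleftrightarrow\quad
T_{jk}\bigl(a_{ki}a_{jk}-2a_{ji}a_{kk}\bigr)\leq0,
\]
and $T_{jk}\geq0$ does not imply the right-hand side. For instance, take $(i,j,k)=(1,2,3)$ and
\[
A=\begin{pmatrix}1&0.1&1\\1&0.05&1\\10&2&1\end{pmatrix}.
\]
Here $A>0$, $D=0.45$, and all nine $V_{\ell m}(A)<0$, so $A\notin\V$. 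Also $T_{22}=-9$, $\beta_{12}=2.8>0$ and $T_{23}=1>0$. Yet $\tau=7/45<1/5=a_{32}/a_{31}$. The proposition itself is not violated, since $\tau>C=a_{22}/a_{21}$, but your intermediate inequality fails. Your fallback, which takes a root $c$ with $AZ_{ij}(c)\in\V$, is not yet an argument, and it yields no evident contradiction.

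\textbf{Missing idea.} Compare with the row-$j$ candidate $a_{jj}/a_{ji}$ instead.
\begin{enumerate}[(1)]
\item Multiply $\tau<C\leq a_{jj}/a_{ji}$ by $2a_{ji}T_{jj}<0$. This gives $2a_{ji}T_{ji}+a_{jk}T_{jk}>2a_{jj}T_{jj}$.
\item Substitute the row-$j$ expansion $a_{jj}T_{jj}=D+a_{ji}T_{ji}+a_{jk}T_{jk}$. The inequality becomes $-a_{jk}T_{jk}>2D>0$, so $T_{jk}<0$.
\item Then $H_{ij}\geq0$ together with the factorization $H_{ij}=a_{ii}T_{jk}\widetilde H_{ij}$ gives $\widetilde H_{ij}\leq0$.
\end{enumerate}
This is exactly how the paper closes the proof. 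With this replacement your outline is complete.
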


\begin{proof}
Theorem \ref{thm:homogeneous-finite-move} (ii) gives
$q_{ij}^A(0)=V_{ji}(A)<0$.  At $c=C$, the matrix $B=AZ_{ij}(C)$
is nonnegative and has a zero entry. If $q_{ij}^A(C)\geq0$, then \(V_{ji}(B)\geq0\), 
so Theorem \ref{thm:homogeneous-finite-move} (ii) places $B$ in $\T$.  The first-move description of $\V$ would then place $A$ in
$\V$, a contradiction.  Thus both endpoint values are negative.

A quadratic negative at both endpoints has a zero in the interval
exactly when it is concave, its vertex lies in $(0,C)$, and its maximum
is nonnegative.  This is equivalent to
\[
T_{jj}<0,
\qquad
\beta_{ij}>0,
\qquad
2\alpha_{ij}C+\beta_{ij}<0,
\qquad H_{ij}\geq0.
\]
We show next that these inequalities are equivalent to the five displayed inequalities
in the statement. The factorization of $H_{ij}$ in Lemma~\ref{lem:Frydman-discriminant-factorization}
shows that $T_{jk}<0$ and $\widetilde H_{ij}\leq0$ implies $H_{ij}\geq 0$, which gives one direction.

Let us prove the other direction. Expanding
$D$ along row $j$ in the formula for $\beta_{ij}$ gives
\[
\beta_{ij}=-a_{ii}(2a_{ji}T_{ji}+a_{jk}T_{jk}).
\]
Hence,
\[
-\frac{\beta_{ij}}{2\alpha_{ij}}
=\frac{2a_{ji}T_{ji}+a_{jk}T_{jk}}{2a_{ji}T_{jj}}\leq C\leq \frac{a_{jj}}{a_{ji}}.
\]
Since $T_{jj}<0$, the preceding inequality gives
\[
2a_{ji}T_{ji}+a_{jk}T_{jk}
\geq 2a_{jj}T_{jj}.
\]
Using
\[
D=a_{jj}T_{jj}-a_{ji}T_{ji}-a_{jk}T_{jk},
\]
this is equivalent to
\[
-a_{jk}T_{jk}\geq2D.
\]
Hence $T_{jk}<0$. The inequality $\widetilde H_{ij}\leq0$ now follows from Lemma~\ref{lem:Frydman-discriminant-factorization}.
\end{proof}

\subsection{The last-move quadratic}

Let \(A=(a_{ij})_{ij}>0\), and fix \(i\neq j\). To undo a last move of
type \(X_{ij}\), consider
\[
A_{ij}(t)=X_{ij}(-t)A.
\]
This matrix is nonnegative precisely for
\[
0\leq t\leq S_{ij},
\qquad
S_{ij}=\min_{1\leq\ell\leq3}\frac{a_{i\ell}}{a_{j\ell}}.
\]

Define the last-move quadratic
\[
G_{ij}^A(t)=V_{ji}\bigl(A_{ij}(t)\bigr).
\]
A direct calculation gives
\[
\begin{aligned}
	G_{ij}^A(t)
	={}&a_{jj}(a_{ii}-ta_{ji})(T_{ji}-tT_{ii})
	-D(a_{ij}-ta_{jj})\\
	={}&\alpha_{ij}^Gt^2+\beta_{ij}^Gt+\gamma_{ij}^G,
\end{aligned}
\]
where
\[
\alpha_{ij}^G=a_{jj}a_{ji}T_{ii},
\qquad
\beta_{ij}^G
=
a_{jj}\bigl(D-a_{ii}T_{ii}-a_{ji}T_{ji}\bigr),
\qquad
\gamma_{ij}^G=V_{ji}(A).
\]

\begin{proposition}
	\label{prop:last-move-quadratic-criterion}
	Suppose that \(A>0\) and \(A\notin\V\). Then \(A\in\NH\) if and only
	if, for some ordered pair \((i,j)\), the quadratic \(G_{ij}^A\) has a
	root in \([0,S_{ij}]\).
\end{proposition}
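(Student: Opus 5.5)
The plan is to derive the statement from Theorem~\ref{thm:homogeneous-finite-move}~(iii) by transposition, using Proposition~\ref{prop:first-last-symmetry} and Lemma~\ref{lem:diagonal-transpose-covariance}. In outline: undoing a last move on $A$ is the same as undoing a first move on $A^t$.

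\emph{Step 1.} Since $A>0$, $A\notin\T$, so $A\notin\T\cup\V$. The regions $\V$ and $\NH$ are invariant under transposition, so $A^t>0$, $A^t\notin\T\cup\V$, and $A\in\NH$ iff $A^t\in\NH$. By Theorem~\ref{thm:homogeneous-finite-move}~(iii), this holds iff, for some ordered pair $(i,j)$, the quadratic $q_{ij}^{A^t}$ has a root in $[0,C_{ij}(A^t)]$.

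\emph{Step 2.} Translate this into data of $A$. We have
\[
\bigl(A^tZ_{ij}(c)\bigr)^t=Z_{ij}(c)^tA=(I+cE_{ii}-cE_{ji})A .
\]
Also $C_{ij}(A^t)=\min_\ell a_{j\ell}/a_{i\ell}$. On the other side,
\[
X_{ji}(-t)A=(I-tE_{ji})A
\]
is nonnegative exactly for $0\le t\le S_{ji}=\min_\ell a_{j\ell}/a_{i\ell}$. These two ranges agree, so the natural correspondence is $(i,j)$ for $A^t$ $\leftrightarrow$ $(j,i)$ for $A$, with $c=t$. The extra term $cE_{ii}A$ only rescales row $i$ by $(1+c)$: its row $i$ is $(1+c)$ times the original row $i$. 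Row $j$ becomes $a_j-ca_i$. Hence
\[
Z_{ij}(c)^tA=D_i(1+c)\,X_{ji}(-c)A .
\]
By Lemma~\ref{lem:diagonal-transpose-covariance}, $V_{ji}$ changes by the positive factor $\lambda\rho\lambda_i\rho_j$. Using $V_{ji}(B^t)=V_{ij}(B)$, this gives
\[
V_{ji}\bigl(A^tZ_{ij}(c)\bigr)=V_{ij}\bigl(Z_{ij}(c)^tA\bigr)=(\text{positive factor})\cdot V_{ij}\bigl(X_{ji}(-c)A\bigr)=(\text{positive})\cdot G_{ji}^A(c).
\]
Since also $V_{ji}(A^tZ_{ij}(c))=(1+c)q_{ij}^{A^t}(c)$, the roots of $q_{ij}^{A^t}$ in $[0,C_{ij}(A^t)]$ coincide with the roots of $G_{ji}^A$ in $[0,S_{ji}]$.

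\emph{Step 3.} Ranging over all ordered pairs gives the equivalence with the pair $(j,i)$ relabeled as $(i,j)$.

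The main obstacle is bookkeeping: tracking the index pattern under transposition and the exact diagonal scaling factor, which depends on $c$ but is positive. The argument only needs the sign of the scaling, so I will check positivity rather than the precise factor.

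A fallback is a direct argument modeled on the proof of Theorem~\ref{thm:homogeneous-finite-move}~(iii), using the last-move description in Proposition~\ref{prop:first-last-symmetry}.

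Suppose $A\sim X_{ij}(t)B$ with $B\in\V$. Then $B\sim X_{ij}(-t)A=A_{ij}(t)$ is positive with $t\in[0,S_{ij})$, so some $V_{k\ell}(A_{ij}(t))\ge0$.

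There is a gap in this route. It must be shown that the relevant index is $V_{ji}$, i.e.\ that along the path $s\mapsto A_{ij}(s)$ only $V_{ji}$ can change sign first. This is where the transposition argument is cleaner, because it imports Frydman's analysis rather than reproving it, so I would rely on Steps 1--3.
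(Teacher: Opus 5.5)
Your proposal is correct and follows the paper's own proof, which obtains this as the last-move form of Theorem~\ref{thm:homogeneous-finite-move}~(iii) by transposition. Your Steps 1--3 supply the bookkeeping the paper leaves implicit; in fact both positive factors equal $1+c$, so $q_{ij}^{A^t}=G_{ji}^A$ identically, and $C_{ij}(A^t)=S_{ji}$.
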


\begin{proof}
This is the last-move form of
Theorem \ref{thm:homogeneous-finite-move} (iii), obtained from the
first-move criterion by transposition.
\end{proof}

If \(A\notin\V\), then \(G_{ij}^A(0)=V_{ji}(A)<0\). Moreover,
\(G_{ij}^A(S_{ij})<0\): otherwise the endpoint
\(X_{ij}(-S_{ij})A\), which has a zero entry, would belong to
\(\T\), and the last-move description would imply \(A\in\V\).
Consequently, \(G_{ij}^A\) has a root in its admissible interval if
and only if
\[
\alpha_{ij}^G<0,
\qquad
\beta_{ij}^G>0,
\qquad
2\alpha_{ij}^GS_{ij}+\beta_{ij}^G<0,
\qquad
\disc(G_{ij}^A)\geq0.
\]

\begin{lemma}
	\label{lem:last-move-Frydman-discriminants}
	Let \((i,j,k)\) be a permutation of \((1,2,3)\). Then
	\[
	\disc(G_{ij}^A)=\disc(q_{jk}^A).
	\]
\end{lemma}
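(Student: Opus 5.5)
This is a polynomial identity in the entries of $A$, so the plan is to compare the two discriminants directly. I will reduce both sides to explicit expressions in the entries and modified minors, and use Lemma~\ref{lem:Frydman-discriminant-factorization} for the right-hand side.

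\textbf{Right-hand side.} By Lemma~\ref{lem:Frydman-discriminant-factorization}, applied to the permutation $(j,k,i)$,
\[
\disc(q_{jk}^A)=a_{jj}T_{ki}\bigl(4a_{kj}a_{ji}D+a_{jj}a_{ki}^2T_{ki}\bigr).
\]
So it suffices to show that $\disc(G_{ij}^A)$ equals this expression.

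\textbf{Left-hand side.} The first form of $G_{ij}^A$ is
\[
G_{ij}^A(t)=a_{jj}(a_{ii}-ta_{ji})(T_{ji}-tT_{ii})-D(a_{ij}-ta_{jj}).
\]
Write $u=a_{ii}-ta_{ji}$ and $v=T_{ji}-tT_{ii}$. Then $G=a_{jj}uv-D(a_{ij}-ta_{jj})$. The discriminant of $\alpha t^2+\beta t+\gamma$ is $\beta^2-4\alpha\gamma$, and here
\[
\alpha=a_{jj}a_{ji}T_{ii},\qquad
\beta=a_{jj}(D-a_{ii}T_{ii}-a_{ji}T_{ji}),\qquad
\gamma=a_{jj}a_{ii}T_{ji}-a_{ij}D.
\]
Both $\beta^2$ and $\alpha\gamma$ carry a factor $a_{jj}$, as does the target. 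Hence I first factor out $a_{jj}$. The remaining identity to check is
\[
a_{jj}(D-a_{ii}T_{ii}-a_{ji}T_{ji})^2-4a_{ji}T_{ii}(a_{jj}a_{ii}T_{ji}-a_{ij}D)
= T_{ki}\bigl(4a_{kj}a_{ji}D+a_{jj}a_{ki}^2T_{ki}\bigr).
\]

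\textbf{Simplifying.} Expand $D$ along row $i$: $D=a_{ii}T_{ii}-a_{ij}T_{ij}-a_{ik}T_{ik}$. Then
\[
D-a_{ii}T_{ii}-a_{ji}T_{ji}=-(a_{ij}T_{ij}+a_{ik}T_{ik}+a_{ji}T_{ji}).
\]
This should collapse further through the adjugate identities. The cofactor relations $\sum_\ell a_{\ell m}\,\mathrm{cof}_{\ell n}=\delta_{mn}D$, rewritten in terms of the $T$'s, give linear relations such as
\[
a_{ji}T_{ji}+a_{ki}T_{ki}=a_{ii}T_{ii}-D
\]
(column $i$ expansion). Substituting this expresses $\beta$ as $a_{jj}\bigl(a_{ki}T_{ki}-a_{ii}T_{ii}+\dots\bigr)$. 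The goal is to write $\beta^2-4\alpha\gamma$ as $\beta_0^2$ plus a term proportional to $T_{ki}$. That would make it a perfect square minus a product, matching the right-hand side, whose leading piece $a_{jj}^2a_{ki}^2T_{ki}^2$ is exactly such a square.

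I would also use the Desnanot--Jacobi / adjugate-minor identity $T_{ii}T_{jj}-(\pm T_{ij}T_{ji})=a_{kk}D$ (the $2\times2$ minors of the adjugate equal $D$ times entries) to eliminate products of two $T$'s with mixed indices. Concretely, I expect to need
\[
T_{ii}T_{jk}\ \text{or}\ T_{ji}T_{ik}-T_{ii}T_{jk}=\pm a_{kj}D,
\]
which is where the $4a_{kj}a_{ji}D$ term comes from.

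The main obstacle is bookkeeping the signs of these adjugate-minor identities with the modified minors $T_{ij}=(-1)^{i+j-1}M_{ij}$. I would settle it by checking one instance, say $(i,j,k)=(1,2,3)$, where everything is an explicit polynomial. If the structured route stalls, I would simply verify the degree-$8$ polynomial identity by direct expansion for $(1,2,3)$. Since both sides transform identically under simultaneous permutation of indices, this single case suffices for all permutations.
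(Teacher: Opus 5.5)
Your proposal is correct and is essentially the paper's argument: expand $\disc(G_{ij}^A)$ directly and match it with Lemma~\ref{lem:Frydman-discriminant-factorization} applied to the permutation $(j,k,i)$. Your reduced identity after factoring out $a_{jj}$ is the right one, though no Desnanot--Jacobi identity is needed. The column-$i$ expansion turns the discriminant into $a_{jj}^2a_{ki}^2T_{ki}^2+4a_{jj}a_{ji}D\,(a_{ij}T_{ii}-a_{jj}T_{ji})$, and the linear cofactor relation $a_{ij}T_{ii}-a_{jj}T_{ji}=a_{kj}T_{ki}$ (column $j$ paired with the cofactors of column $i$) finishes it.
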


\begin{proof}
	This follows by expanding the discriminant of \(G_{ij}^A\) and using
	the factorization in
	Lemma~\ref{lem:Frydman-discriminant-factorization}.
\end{proof}

\subsection{Frydman's minimal spiral representations}
The following theorem is \cite[Theorem~3.1]{Frydman1980}.

\begin{theorem}
	\label{thm:Frydman-spiral}
	Let $P$ be a row-stochastic matrix. Suppose  that
	$P\in R_n\setminus\bigl(R_{n-1}\cup\V\bigr)$ and   $n\geq 6$.
	Then $P$ has a minimal representation
	\[
	P=(K_1K_2K_3K_4K_5)(K_6\cdots K_n),
	\]
	where 	$K_1,\ldots,K_n$ are e.r.s. matrices and
	\[
	P'=K_1K_2K_3K_4K_5\in\V
	\]
	satisfies exactly one vestibule equality. Moreover, in any such
	representation, if the unique vestibule equality is $V_{ji}(P')=0$ and $k$ is the remaining state, then the move types of the e.r.s. matrices 
	$K_1,\ldots,K_n$ are, successively,
	\[
	(k,i),\ (j,k),\ (i,j),\ (k,i),\ (j,k),\ (i,j),\ldots .
	\]
\end{theorem}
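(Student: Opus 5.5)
The plan is to argue by a compactness/extremality choice of representation, and then to read off the move types from the reduced Frydman test (Proposition~\ref{prop:reduced-Frydman-test}) applied repeatedly along the representation. Throughout I work in the homogeneous formulation, writing $K_\ell$ for an elementary factor up to diagonal equivalence. By Proposition~\ref{prop:first-last-symmetry}, right multiplication by $K_\ell$ adds a first move and left multiplication adds a last move.

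\textbf{Step 1: existence of the normalized representation.} Fix a minimal representation $P=K_1\cdots K_n$. Every consecutive block of factors is again a minimal representation of its product, since otherwise $P\in R_{n-1}$. In particular no block of length $\le n-1$ can have a factor with parameter $0$, and no prefix of length $5$ can lie in $\T$. Indeed, by Theorem~\ref{thm:homogeneous-finite-move}(i) such a prefix would lie in $R_5$, and if $G=0$ in $R_4$, which combined with a Corollary~\ref{cor:no-exit-vestibule}-type absorption would shorten the word.

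Now vary the parameters of $K_1,\dots,K_6$ while keeping the product $K_1\cdots K_6$ fixed, which is possible along a $1$-parameter family inside $R_6$. Among all minimal representations of $P$, choose one for which the prefix $P'=K_1\cdots K_5$ is extremal in this family. Proposition~\ref{prop:Rn-relatively-closed} (relative closedness of $R_5$) provides the compactness needed for this choice. I expect extremality to force $P'$ onto the boundary $\max V_{ij}(P')=0$ of $\T\cup\V$ from Theorem~\ref{thm:homogeneous-finite-move}(ii), so that $P'\in\V$ with some vestibule equality. If strict inequality held, one could push a parameter further and either lower the length (contradicting $P\notin R_{n-1}$) or reach $P\in\V$ (excluded by hypothesis).

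\textbf{Step 2: uniqueness of the equality.} Suppose two of the $V_{ij}(P')$ vanished. Then $P'$ would be a corner point of the vestibule boundary. Using the first-move description of $\V$ and the endpoint argument in the proof of Proposition~\ref{prop:reduced-Frydman-test}, I would show that $P'K_6$ then still lies in $\T\cup\V$. This would give $P'K_6\in R_5$, and hence $P\in R_{n-1}$, a contradiction.

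\textbf{Step 3: the spiral pattern.} Suppose the unique equality is $V_{ji}(P')=0$. Since $P'K_6\notin\T\cup\V$ but lies in $\NH$, undoing $K_6$ is a root of a Frydman quadratic. The relation $V_{ji}(AZ_{ij}(c))=(1+c)q^A_{ij}(c)$ identifies the pair as $(i,j)$, which fixes the type of $K_6$. The types of $K_1,\dots,K_5$ are fixed by reading off which sign conditions $T_{jj}<0$ and $T_{jk}<0$ from Proposition~\ref{prop:reduced-Frydman-test} are compatible with a vestibule point built from a threshold matrix. The last-move quadratic $G^A_{ij}$ and Lemma~\ref{lem:last-move-Frydman-discriminants}, which says $\disc(G_{ij})=\disc(q_{jk})$, link the last-move and first-move pictures. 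This link is what I expect to force the cyclic shift $(i,j)\to(k,i)\to(j,k)\to\cdots$.

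Finally, induct along the word. Apply the same analysis to the shifted block $K_2\cdots K_6$, which is again a minimal representation in $R_6\setminus(R_5\cup\V)$ when $n\ge 7$. This shows its unique vestibule equality is the cyclically rotated one, and the type of $K_7$ is then determined, and so on.

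\textbf{Main obstacle.} The hardest step is this induction. It requires showing that the normalization chosen in Step 1 propagates, that is, that the shifted five-block can simultaneously be made to satisfy exactly one vestibule equality without disturbing the earlier factors. I would first try to get this from the fact that the vertex condition $2\alpha C+\beta<0$ holds strictly and the root is simple. At a minimal representation the discriminant $\widetilde H_{ij}$ should vanish, i.e.\ the root is tangential, and the sign of $T_{jk}$ then determines the next index.
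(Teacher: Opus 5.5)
The paper gives no proof of this statement; it quotes it from Frydman \cite[Theorem~3.1]{Frydman1980}. Your sketch therefore has to stand on its own, and it does not: the load-bearing steps are stated as expectations, and several of the mechanisms are wrong.

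In Step~1 there is in general no one-parameter family of factorizations with $K_1\cdots K_6$ fixed. For fixed move types the product map goes from six parameters to the six-dimensional set of row-stochastic matrices. Wherever its Jacobian is invertible, which is the typical situation since $R_6$ has interior, the factorization is locally unique. The freedom that actually exists is in where you split the word. Let $P_m=K_1\cdots K_m$ be the last prefix in $\T\cup\V$; it necessarily lies in $\V$, and $m\le 5$. You can then slide the parameter of $K_{m+1}$ back along $P_{m+1}Z_{ab}(c)$ until $\max_{i,j}V_{ij}=0$. But the new prefix is then only known to lie in $\V\subseteq R_6$. You still need a real argument that it can be written with exactly five factors without lengthening the word, and the sketch never addresses this.

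Step~2 has the same defect. The claim that two vanishing $V_{ij}(P')$ force $P'K_6\in\T\cup\V$ is not argued. Moreover, the inference ``$P'K_6\in\T\cup\V$, hence $P'K_6\in R_5$'' is false on the $\V$ part. Indeed, $\V$ contains the nonempty open set $\{A>0:\max_{i,j}V_{ij}(A)>0\}$, while $R_5$, a finite union of images of five-dimensional cubes, has empty interior. A contradiction would instead have to come from $P\notin\V$ when $n=6$, or from $P'K_6K_7\in\V\cup\NH\subseteq R_6$ when $n\ge 7$.

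Step~3 determines no move type. The identity $V_{ji}(AZ_{ij}(c))=(1+c)q^A_{ij}(c)$ describes what happens if the first move after $P'$ has type $(i,j)$. It does not show that a move of another type cannot carry $P'$ out of $\T\cup\V$ in a minimal word. Nothing in the sketch produces the types $(k,i),(j,k),(i,j),(k,i),(j,k)$ of $K_1,\dots,K_5$.

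The induction is ill-posed. The block $K_2\cdots K_6$ has five factors, so it cannot lie in $R_6\setminus R_5$. Re-normalizing a shifted window via the existence part would also change earlier factors, whereas the statement asserts the pattern for \emph{every} representation whose five-factor prefix satisfies exactly one vestibule equality.

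Finally, your proposed fix for the main obstacle rests on a false premise: minimality does not make the admissible root tangential. The condition $\widetilde H_{ij}=0$ is codimension one. Generic matrices of $\NH$ have a simple admissible root, and the case $n=6$ of the statement applies to them. In the paper, tangency appears only at a first exit from $R_7$ (the windows $A_0,A_1$, and hence the critical family). It is forced there precisely because a simple root would place those windows in the interior of $R_6$ (Lemma~\ref{lem:simple-root-certificate-open}). So the spiral pattern cannot be extracted from tangency, and as it stands the proposal does not prove the theorem.
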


We will apply the preceding theorem in the homogeneous setting introduced before, and by adding new elementary factors on the left  side rather than on the right.

\begin{corollary}
	\label{cor:left-Frydman-spiral}
Let $A\in \mathcal M_+$.  Suppose that $A\in R_n\setminus\bigl(R_{n-1}\cup\V\bigr)$
 and $n\geq 6$.
	Then $A$ has a minimal representation
	\[
	A=(X_n\cdots X_6)(X_5\cdots X_1),
	\]
	where each $X_\ell$ is a homogeneous elementary matrix and
	\[
	A'=X_5\cdots X_1\in\V
	\]
	satisfies exactly one vestibule equality. Moreover, in any such
	representation, if the unique vestibule equality is
	$V_{ij}(A')=0$ and $k$ is the remaining state, then, read in the order in which the moves are applied, the move types of
	$X_1,\ldots,X_n$ are, successively,
	\[
	(i,k),\ (k,j),\ (j,i),\ (i,k),\ (k,j),\ (j,i),\ldots .
	\]
\end{corollary}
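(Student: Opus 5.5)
The plan is to deduce this corollary from Theorem~\ref{thm:Frydman-spiral} by transposition, passing through row normalization. Let $A\in R_n\setminus(R_{n-1}\cup\V)$ with $n\ge 6$. By Proposition~\ref{prop:first-last-symmetry}, each of $R_n$, $R_{n-1}$ and $\V$ is invariant under transposition. Hence $A^t$ also lies in $R_n\setminus(R_{n-1}\cup\V)$, and so does $P=\RowNorm(A^t)$, since membership is a property of $\sim$-classes. Applying Theorem~\ref{thm:Frydman-spiral} to $P$ gives a minimal representation
\[
P=(K_1\cdots K_5)(K_6\cdots K_n),\qquad P'=K_1\cdots K_5\in\V,
\]
in which $P'$ satisfies exactly one vestibule equality, say $V_{ji}(P')=0$, and the move types are $(k,i),(j,k),(i,j),\dots$.

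Next I convert to homogeneous form and transpose back. Write each $K_\ell=K_{ab}(s)=D_a(1-s)X_{ab}(s/(1-s))$. The identity $DX_{ij}(t)D^{-1}=X_{ij}(d_it/d_j)$ lets me push all diagonal factors to the left of the product. This gives $A^t\sim X'_1\cdots X'_n$, where $X'_\ell$ has the same type as $K_\ell$. The diagonal factors coming from $K_1,\dots,K_5$ are pushed only through those five factors, so $X'_1\cdots X'_5\sim P'$. Transposing, and using $X_{ab}(t)^t=X_{ba}(t)$, gives
\[
A\sim (X'_n)^t\cdots (X'_1)^t .
\]
Absorbing the outer diagonal matrices into the leftmost and rightmost elementary factors via the same conjugation identity, I set $X_\ell=(X'_\ell)^t$. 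This yields a representation $A=(X_n\cdots X_6)(X_5\cdots X_1)$ up to $\sim$, which is the sense of equality used in the homogeneous setting. This representation is minimal, since a shorter one would put $A$, and hence $P$, in $R_{n-1}$.

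For the vestibule part, put $A'=X_5\cdots X_1$. It is diagonally equivalent to $(P')^t$. By Lemma~\ref{lem:diagonal-transpose-covariance}, $V_{ij}(A')$ is a positive multiple of $V_{ij}((P')^t)=V_{ji}(P')$. So the equality $V_{ji}(P')=0$ becomes $V_{ij}(A')=0$, the other $V$'s keep their signs, $A'\in\V$, and the equality is still unique. For the move types, type $(a,b)$ for $K_\ell$ becomes type $(b,a)$ for $X_\ell$. The sequence $(k,i),(j,k),(i,j),\dots$ therefore becomes $(i,k),(k,j),(j,i),\dots$, read from $X_1$ outward, which is the order of application.

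The "in any such representation" clause follows by running the same dictionary in reverse. Given any homogeneous representation of $A$ with the stated properties, transposing and row-normalizing each partial product, using $X_{ab}(t)=D_a(1+t)K_{ab}(t/(1+t))$ and pushing the diagonals to one side, produces a stochastic representation of $P$ satisfying the hypotheses of Theorem~\ref{thm:Frydman-spiral}. Its type pattern is then forced, and transposing back gives the claimed pattern.

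The main obstacle is bookkeeping rather than mathematics. Pushing the diagonal factors across must neither change the move types nor break the splitting into the first five and last $n-5$ factors. This holds because diagonal conjugation only rescales the parameters, and the parameters stay positive, or zero only when they were zero before.
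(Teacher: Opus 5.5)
Your proposal is correct and follows the same route as the paper: pass to a row-stochastic representative of $A^t$, apply Theorem~\ref{thm:Frydman-spiral}, and transpose back, using that transposition reverses the order of the factors, turns a move of type $(a,b)$ into one of type $(b,a)$, and sends $V_{ji}=0$ to $V_{ij}=0$. Your version only spells out the diagonal-factor bookkeeping, including reading the displayed equality up to $\sim$, which the paper's short proof leaves implicit.
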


\begin{proof}
	Apply Theorem~\ref{thm:Frydman-spiral} after passing to a
	row-stochastic representative of $A^t$. Transposition reverses the
	order of the factors and sends a move of type $(a,b)$ to one of type
	$(b,a)$. It also sends the equality $V_{ji}=0$ to $V_{ij}=0$.
	The positive diagonal factors arising from row normalization do not
	change move types.
\end{proof}

\section{Reduction to the critical family}
\label{sec:critical-reduction}

We work in $\mathcal M_+$ with the homogeneous moves $X_{ij}$. In this section, we
reduce a hypothetical first exit from $R_7$ to a three-parameter
family of matrices.

Suppose that a pull-in path leaves $R_7$. Since $R_7$ is relatively closed
in $\mathcal M_+$, by Proposition~\ref{prop:Rn-relatively-closed}, there
is a first-exit point $A\in R_7$.
By relabeling the states, we
may assume that the pull-in path is generated by left multiplication by
$X_{23}$. Choose a sequence
$h_\nu\downarrow0$ such that
\[
A\in R_7,
\qquad
X_{23}(h_\nu)A\notin R_7.
\]
Then $A\notin R_6$, since otherwise
$X_{23}(h_\nu)A\in R_7$. Hence
\[
A\in R_7\setminus R_6.
\]

By Corollary~\ref{cor:left-Frydman-spiral}, choose a minimal
representation
\[
A=(X_7X_6)(X_5\cdots X_1)
\]
for which $A'=X_5\cdots X_1\in\V$ satisfies exactly one vestibule
equality. For every $\nu$, left multiplication by $X_{23}(h_\nu)$ gives an
eight-move representation of $X_{23}(h_\nu)A$. This representation is
minimal because $X_{23}(h_\nu)A\notin R_7$. It has the same
five-factor vestibule block $A'$, so the second part of
Corollary~\ref{cor:left-Frydman-spiral} applies.
Since the eighth move $X_{23}(h_\nu)$ has type $(2,3)$, the corresponding ordering
of the states is
\[
(i,j,k)=(1,3,2).
\]
Thus the move types of $X_1,\ldots,X_8$, read from right to left in the
displayed product, are
\[
(1,2),\ (2,3),\ (3,1),\ (1,2),\
(2,3),\ (3,1),\ (1,2),\ (2,3).
\]
Consequently,
\[
A=X_{12}(c_7)X_{31}(c_6)X_{23}(c_5)
X_{12}(c_4)X_{31}(c_3)X_{23}(c_2)X_{12}(c_1),
\]
where every $c_i>0$.

Let
\[
A_0=X_{31}(c_6)X_{23}(c_5)X_{12}(c_4)
    X_{31}(c_3)X_{23}(c_2)X_{12}(c_1)
\]
and
\[
A_1=X_{12}(c_7)X_{31}(c_6)X_{23}(c_5)
    X_{12}(c_4)X_{31}(c_3)X_{23}(c_2).
\]
These are the two consecutive six-move windows.

\begin{lemma}
\label{lem:two-R6-boundary-windows}
If $A$ is a first-exit point as above, then
$A_0,A_1\in\partial R_6$.
\end{lemma}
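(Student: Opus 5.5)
The plan is to show directly that each window lies in $R_6$ and is a limit of matrices in $\mathcal M_+\setminus R_6$; this gives $A_0,A_1\in\partial R_6$, with the boundary taken relative to $\mathcal M_+$. Membership in $R_6$ is immediate, since $A_0$ and $A_1$ are displayed as products of six elementary matrices $X_{ij}(t)$ with $t\geq0$. For the approximation, I will push the exiting perturbations $X_{23}(h_\nu)$ onto each window, using $X_{23}(h_\nu)A\notin R_7$ and the fact that prepending or appending a single elementary factor maps $R_6$ into $R_7$.

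\emph{The window $A_1$.} Here $A=A_1X_{12}(c_1)$, so $X_{23}(h_\nu)A=\bigl(X_{23}(h_\nu)A_1\bigr)X_{12}(c_1)$. The matrix $X_{23}(h_\nu)A_1$ is nonnegative with positive determinant, hence lies in $\mathcal M_+$. If it belonged to $R_6$, then appending $X_{12}(c_1)$ would give a seven-move representation of $X_{23}(h_\nu)A$. This contradicts $X_{23}(h_\nu)A\notin R_7$. Hence $X_{23}(h_\nu)A_1\in\mathcal M_+\setminus R_6$. Since $X_{23}(h_\nu)A_1\to A_1$ as $h_\nu\downarrow0$, we get $A_1\in\partial R_6$.

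\emph{The window $A_0$.} Here $A=X_{12}(c_7)A_0$, and the perturbation acts on the wrong side, so I will conjugate it through $X_{12}(c_7)$. Put
\[
M_h=X_{12}(-c_7)X_{23}(h)X_{12}(c_7).
\]
Since $E_{23}E_{12}=0$, this equals
\[
M_h=I+h\,(E_{23}-c_7E_{13}).
\]
Then $X_{23}(h_\nu)A=X_{12}(c_7)\bigl(M_{h_\nu}A_0\bigr)$, and $M_{h_\nu}A_0\to A_0$. If $M_{h_\nu}A_0$ lies in $\mathcal M_+$ and belongs to $R_6$, prepending $X_{12}(c_7)$ puts $X_{23}(h_\nu)A$ in $R_7$, which is again a contradiction. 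The remaining issue is that $M_h$ has a negative entry, so $M_hA_0$ need not be nonnegative. This is the one delicate point, and I handle it by showing $A_0>0$.

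Suppose $A_0\in\T\cup\V$. Then Corollary~\ref{cor:no-exit-vestibule} gives $A=X_{12}(c_7)A_0\in R_6$, contradicting $A\in R_7\setminus R_6$. So $A_0\in R_6\setminus(\T\cup\V)=\NH$ by Theorem~\ref{thm:homogeneous-finite-move}(iv), and in particular $A_0>0$. For small $h$, the matrix $M_hA_0$ is a small perturbation of a positive matrix with positive determinant, so $M_{h_\nu}A_0\in\mathcal M_+$ for large $\nu$. By the argument above, $M_{h_\nu}A_0\in\mathcal M_+\setminus R_6$, and these matrices converge to $A_0$. Hence $A_0\in\partial R_6$.
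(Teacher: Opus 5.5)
Your proof is correct and is essentially the paper's argument. The paper also transfers the exiting perturbation to the two windows, via $X_{12}(-c_7)X_{23}(h)A=M_hA_0$ and $X_{23}(h)A_1$, though it argues by contradiction with interiority rather than by exhibiting approximating sequences in $\mathcal M_+\setminus R_6$; your check that $A_0>0$, so that $M_hA_0\in\mathcal M_+$ for small $h$, fills in a point the paper leaves implicit (it can also be seen directly by multiplying out the six factors of $A_0$).
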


\begin{proof}
Both matrices lie in $R_6$.  If $A_0$ were interior to $R_6$, then
$X_{12}(-c_7)X_{23}(h)A$ would remain in $R_6$ for small $h$, and
multiplication by $X_{12}(c_7)$ would put $X_{23}(h)A$ in $R_7$, contradicting that
$X_{23}(h_\nu)A\notin R_7$ for $h_\nu\to 0$.  If
$A_1$ were interior to $R_6$, then $X_{23}(h)A_1\in R_6$ for small $h$, and
right multiplication by $X_{12}(c_1)$ would again put
$X_{23}(h)A$ in $R_7$ for all small $h$. 
\end{proof}

\begin{lemma}
\label{lem:simple-root-certificate-open}
Let $A>0$. If, for some $i\neq j$, the last-move quadratic
$G_{ij}^A$ has a simple root $t_0$ in $(0,S_{ij})$, then $A$ is an
interior point of
$R_6$ relative to $\mathcal M_+$.
\end{lemma}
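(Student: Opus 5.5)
The plan is to prove that a simple root in the open interval persists under small perturbations of $A$, and that every matrix carrying such a persistent root lies in $R_6$.

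\emph{Step 1: persistence of the root.} The coefficients $\alpha^G_{ij},\beta^G_{ij},\gamma^G_{ij}$ of $G^A_{ij}$ are polynomials in the entries of $A$. Hence $(B,t)\mapsto G^B_{ij}(t)$ is jointly continuous and smooth in $t$. Since $t_0$ is a simple root, $\partial_t G^A_{ij}(t_0)\neq 0$, so $G^A_{ij}$ changes sign at $t_0$. Choose $\delta>0$ with $[t_0-\delta,t_0+\delta]\subset(0,S_{ij}(A))$ on which $G^A_{ij}$ takes opposite signs at the two endpoints. The map $B\mapsto S_{ij}(B)$ is continuous on $B>0$, and the endpoint values $G^B_{ij}(t_0\pm\delta)$ depend continuously on $B$. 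So there is a neighbourhood $U$ of $A$ in $\mathcal M_+$ (inside $\{B>0\}$, which is open) such that for every $B\in U$:
\begin{itemize}
\item $[t_0-\delta,t_0+\delta]\subset(0,S_{ij}(B))$;
\item $G^B_{ij}(t_0-\delta)$ and $G^B_{ij}(t_0+\delta)$ have opposite signs.
\end{itemize}
By the intermediate value theorem, $G^B_{ij}$ then has a root $t_B\in(0,S_{ij}(B))$.

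\emph{Step 2: a root gives membership in $R_6$.} Fix $B\in U$ and set $B'=X_{ij}(-t_B)B$. Because $0<t_B<S_{ij}(B)$, the matrix $B'$ is nonnegative. Its determinant is $\det B>0$, so $B'\in\mathcal M_+$. By definition $V_{ji}(B')=G^B_{ij}(t_B)=0$, so Theorem~\ref{thm:homogeneous-finite-move}(ii) gives $B'\in\T\cup\V$. Writing $B=X_{ij}(t_B)B'$, Corollary~\ref{cor:no-exit-vestibule} yields $B\in R_6$. Thus $U\subset R_6$, and $A$ is an interior point of $R_6$ relative to $\mathcal M_+$.

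This argument does not need Proposition~\ref{prop:last-move-quadratic-criterion} or the hypothesis $A\notin\V$. It uses only that a zero of $V_{ji}$ certifies $\T\cup\V$, together with the no-exit corollary.

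The only delicate point is Step 1. One must keep the perturbed root strictly inside the admissible interval $[0,S_{ij}(B)]$. Otherwise $X_{ij}(-t_B)B$ could fail to be nonnegative and fall outside $\mathcal M_+$, so Theorem~\ref{thm:homogeneous-finite-move}(ii) would not apply. Taking $t_0$ in the open interval and using continuity of $S_{ij}$ on positive matrices handles this. Simplicity of the root is what guarantees the sign change. A double root could disappear under perturbation, which is exactly why the hypothesis is needed.
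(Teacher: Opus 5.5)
Your proof is correct and follows the paper's argument: the simple root persists inside the open admissible interval (you make this explicit with a sign change and the intermediate value theorem, where the paper only cites continuous dependence of simple roots), and such a root certifies membership in $R_6$. The one difference is in the second step: the paper splits on whether $B\in\V$ and then invokes Proposition~\ref{prop:last-move-quadratic-criterion}, whereas you unpack that criterion's easy direction directly ($V_{ji}(X_{ij}(-t_B)B)=0$ puts the predecessor in $\T\cup\V$, and Corollary~\ref{cor:no-exit-vestibule} finishes), which avoids the case split.
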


\begin{proof}
A simple root varies continuously with the coefficients. Hence, for
all $B$ sufficiently close to $A$, the polynomial $G_{ij}^B$ has a
root $t(B)$ close to $t_0$, with $0<t(B)<S_{ij}(B)$. If $B\in\V$, then
$B\in R_6$. Otherwise
Proposition~\ref{prop:last-move-quadratic-criterion} places $B$ in
$\NH\subseteq R_6$. Thus a neighborhood of $A$ is contained in $R_6$.
\end{proof}

A direct calculation gives
\[
G_{31}^{A_0}(c_6)=0,
\qquad
\bigl(G_{31}^{A_0}\bigr)'(c_6)=-c_3c_4(c_2-c_5),
\]
and
\[
G_{12}^{A_1}(c_7)=0,
\qquad
\bigl(G_{12}^{A_1}\bigr)'(c_7)=-c_4c_5(c_3-c_6).
\]
Undoing the $X_{31}(c_6)$ move on $A_0$ leaves a positive five-move spiral product.
Hence, $c_6$ lies strictly inside the admissible interval. Similarly, $c_7$
lies strictly inside the admissible interval of $G_{12}^{A_1}$. Since
$A_0,A_1\in\partial R_6$, Lemma~\ref{lem:simple-root-certificate-open}
shows that neither root can be simple. Hence
\[
c_5=c_2,
\qquad
c_6=c_3.
\]

The two relations give
\[
A=X_{12}(c_7)X_{31}(c_3)X_{23}(c_2)
  X_{12}(c_4)X_{31}(c_3)X_{23}(c_2)X_{12}(c_1).
\]
A diagonal conjugation now removes the two parameters $c_2$ and $c_3$.
With $D=\operatorname{diag}(c_2c_3,1,c_2)$, the identity
\[
DX_{ij}(c)D^{-1}=X_{ij}\left(\frac{d_i}{d_j}c\right)
\]
sends both occurrences of $c_2$ and both occurrences of $c_3$ to $1$.
Put $x=c_1c_2c_3$, $r=c_4c_2c_3$, and $y=c_7c_2c_3$. The resulting
matrix $DAD^{-1}$ is
\[
U(x,r,y)
=X_{12}(y)X_{31}(1)X_{23}(1)X_{12}(r)X_{31}(1)X_{23}(1)X_{12}(x).
\]
A direct multiplication gives
\begin{equation}\label{eq:critical-family}
U(x,r,y)=
\begin{pmatrix}
1+y & x(1+y)+r+y & r+2y\\
1 & 1+x & 2\\
2 & 2x+r & 1+r
\end{pmatrix}.
\end{equation}
Its entries are positive and $\det U(x,r,y)=1$.

\begin{theorem}[Reduction to the critical family]
\label{thm:critical-family-reduction}
Assume that, for every $x,r,y>0$, there is $\varepsilon>0$ such that
$X_{23}(h)U(x,r,y)\in R_7$ for $0\leq h<\varepsilon$.  Then no pull-in
move  in $\mathcal M_+$ leaves $R_7$. Consequently
every nonsingular attainable $3\times3$ row-stochastic matrix belongs
to $R_7$.
\end{theorem}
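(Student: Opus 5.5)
The proof is by contradiction, using the reduction carried out just before the statement. Suppose some pull-in move in $\mathcal M_+$ leaves $R_7$. By Proposition~\ref{prop:Rn-relatively-closed}, $R_7$ is relatively closed, so there is a first-exit point $A\in R_7$. After relabeling states we may assume the move is left multiplication by $X_{23}$, and we choose $h_\nu\downarrow0$ with $X_{23}(h_\nu)A\notin R_7$. As shown above, $A\in R_7\setminus R_6$. Corollary~\ref{cor:left-Frydman-spiral} then gives the spiral form $A=X_{12}(c_7)X_{31}(c_6)\cdots X_{12}(c_1)$ with all $c_\ell>0$.

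Next we apply the boundary analysis. Lemma~\ref{lem:two-R6-boundary-windows} puts the two six-move windows $A_0$ and $A_1$ in $\partial R_6$. The derivative computations together with Lemma~\ref{lem:simple-root-certificate-open} force $c_5=c_2$ and $c_6=c_3$. Conjugating by $D=\operatorname{diag}(c_2c_3,1,c_2)$ gives $DAD^{-1}=U(x,r,y)$ with $x,r,y>0$.

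It remains to transport the escape. Using $DX_{23}(h)D^{-1}=X_{23}(h/c_2)$, we get
\[
D\,X_{23}(h_\nu)A\,D^{-1}=X_{23}(h_\nu/c_2)\,U(x,r,y).
\]
The set $R_7$ is invariant under $\sim$, and in particular under diagonal conjugation. Hence $X_{23}(h_\nu/c_2)U(x,r,y)\notin R_7$ for every $\nu$. Since $h_\nu/c_2\to0$, this contradicts the hypothesis that $X_{23}(h)U(x,r,y)\in R_7$ for all $0\le h<\varepsilon$. So no pull-in move leaves $R_7$.

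For the consequence, let $P$ be a nonsingular attainable row-stochastic matrix. By the chattering principle it is a limit of finite products of e.r.s.\ matrices; more directly, Johansen--Ramsey shows $P\in R_\infty$, so $P=K_N\cdots K_1$ for some e.r.s.\ matrices $K_\ell$. Passing to the homogeneous form, this product is a pull-in path from $I\in R_7$ consisting of finitely many pull-in moves, each a segment $s\mapsto X_{ij}(s)B$, and each staying in $\mathcal M_+$ because the determinants are positive. Arguing by induction along the moves, each move starts in $R_7$ and cannot leave it, so the endpoint stays in $R_7$. Hence $P\in R_7$.

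The delicate step, which I expect to be the main obstacle, is the first-exit bookkeeping. The exit point must be taken as the supremum of times at which the path is in $R_7$ along a single move, which exists and lies in $R_7$ by relative closedness. Escaping points then occur arbitrarily close after it, which is exactly the sequence $h_\nu$ used in the reduction. One must also check that each endpoint of the path stays in $\mathcal M_+$, so that Proposition~\ref{prop:Rn-relatively-closed} applies; this holds because $\det X_{ij}(t)=1$.
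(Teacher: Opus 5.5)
Your proposal is correct and follows the paper's proof. It uses the same first-exit reduction to the critical family, made slightly more explicit through $DX_{23}(h)D^{-1}=X_{23}(h/c_2)$, and then Johansen--Ramsey's finite-product theorem with induction along the pull-in moves. One wording fix: the exit time should be the supremum of $t$ for which the whole initial segment $[0,t]$ of the move lies in $R_7$ (equivalently, the infimum of the escaping times), not the supremum of all times at which the path is in $R_7$.
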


\begin{proof}
As argued in the preceding paragraphs,  after relabeling, a hypothetical
first-exit point $A$ of $R_7$ is diagonally equivalent to a critical matrix $U=U(x,r,y)$ and  the outgoing pull-in move is
$X_{23}(h)U$. By hypothesis, however, $X_{23}(h)U\in R_7$ for all
sufficiently small $h\geq 0$, contradicting the assumption that $A$ is a
first-exit point.	
 Thus no finite pull-in path in $\mathcal M_+$ can
leave $R_7$. The finite-product theorem of Johansen--Ramsey then gives the final assertion \cite{JohansenRamsey1979}. 
\end{proof}

\section{The critical region}
\label{sec:critical-region}

In this section we show that a matrix in the critical region cannot be an exit point for $R_7$, and deduce from this the 
main theorem.

Throughout this section, $U=U(x,r,y)$ is the matrix in
\eqref{eq:critical-family}. Put
\[
E=T_{11}(U)=rx-r-3x+1.
\]

If $S$ is the permutation matrix that  interchanges the first two coordinates,
then
\[
U(y,r,x)=SU(x,r,y)^tS.
\]
Hence
\begin{equation}\label{bysymmetry}
U(x,r,y)\in R_n
\quad\Longleftrightarrow\quad
U(y,r,x)\in R_n.
\end{equation}

\subsection{Six-move elimination and the matched
\texorpdfstring{$X_{12}$}{X12}-certificate}

\begin{lemma}
\label{lem:small-r-vestibule}
If $0<r\leq1$, then $U\in\V$.
\end{lemma}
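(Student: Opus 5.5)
The plan is to use the vestibule criterion of Theorem~\ref{thm:homogeneous-finite-move}~(ii). Since $U=U(x,r,y)$ has all entries positive, it suffices to exhibit a single ordered pair $(i,j)$ with $V_{ij}(U)\geq0$. By \eqref{eq:critical-family}, $\det U=1$, so each $V_{ij}$ reduces to an explicit polynomial in $x,r,y$. The task is then to choose the pair for which the sign is obvious when $0<r\leq1$.

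The diagonal candidates do not help. A direct computation gives
\[
T_{33}(U)=u_{11}u_{22}-u_{12}u_{21}=(1+y)(1+x)-\bigl(x(1+y)+r+y\bigr)=1-r,
\]
so $V_{33}(U)=(1+r)(1-r)-1=-r^2<0$. Also $V_{11}=(1+y)E-1$ and $V_{22}$ (its mirror under $x\leftrightarrow y$) have no definite sign. I would therefore turn to the off-diagonal quantities $V_{ij}=u_{ii}u_{jj}T_{ij}-u_{ji}D$, where $T_{ij}=u_{ji}u_{kk}-u_{jk}u_{ki}$.

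The pair $(i,j)=(3,2)$ with $k=1$ looks right. We have
\[
T_{32}(U)=u_{23}u_{11}-u_{21}u_{13}=2(1+y)-(r+2y)=2-r,
\]
so $y$ cancels. This gives
\[
V_{32}(U)=u_{33}u_{22}T_{32}-u_{23}=(1+r)(1+x)(2-r)-2.
\]
For $0<r\leq1$ we have $(1+r)(2-r)=2+r(1-r)\geq2$, and $1+x>1$. Hence $V_{32}(U)>0$, and Theorem~\ref{thm:homogeneous-finite-move}~(ii) gives $U\in\V$.

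The only real work is locating the right $V_{ij}$ and checking the cofactor sign convention for $T_{32}$. I would double-check $T_{32}$ against the definition $T_{ij}=p_{ji}p_{kk}-p_{jk}p_{ki}$ with $(i,j,k)=(3,2,1)$. If a sign slip turned up there, I would try the transposed pair $(2,3)$ as a fallback, together with the symmetry \eqref{bysymmetry}.
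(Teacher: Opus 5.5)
Your proof is correct and takes essentially the paper's route: both apply the vestibule criterion of Theorem~\ref{thm:homogeneous-finite-move}~(ii) to a single off-diagonal $V_{ij}$, and your $T_{32}(U)=2-r$ and $V_{32}(U)=x(2-r)(r+1)+r(1-r)>0$ are right. The only difference is that the paper uses $V_{13}(U)=y(2-r)(r+1)+r(1-r)$, which is the mirror image of your quantity under the $x\leftrightarrow y$ symmetry \eqref{bysymmetry}.
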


\begin{proof}
We have
\[
V_{13}(U)=-y(r-2)(r+1)-r(r-1)\geq0.
\]
So the vestibule criterion from Theorem \ref{thm:homogeneous-finite-move} applies.
\end{proof}

\begin{lemma}[Six-move elimination]
\label{lem:R6-elimination}
Suppose that $U\notin\V$.  Then $U\in R_6$ in each of the following
regions:
\begin{enumerate}[(1)]
\item $1<r\leq2$ and $\min(x,y)\leq r$;
\item $r\geq3$ and $\max(x,y)\geq r$;
\item $2\leq r\leq3$ and $\min(x,y)\geq r$.
\end{enumerate}
\end{lemma}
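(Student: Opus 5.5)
The plan is to use the reduced Frydman test, Proposition~\ref{prop:reduced-Frydman-test}, or equivalently its last-move form after Proposition~\ref{prop:last-move-quadratic-criterion}. Since $U\notin\V$ and $U>0$, Theorem~\ref{thm:homogeneous-finite-move}(iii) says it suffices to find, in each region, one ordered pair $(i,j)$ for which the quadratic has a root in the admissible interval. Concretely, I will exhibit a pair and verify
\[
T_{jj}<0,\quad \beta_{ij}>0,\quad 2\alpha_{ij}C_{ij}+\beta_{ij}<0,\quad T_{jk}<0,\quad \widetilde H_{ij}\leq 0
\]
as polynomial inequalities in $(x,r,y)$ on that region. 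Where it is more convenient, I will instead show directly that $G_{ij}^U$ takes a nonnegative value at some explicit $t\in[0,S_{ij}]$.

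By the symmetry \eqref{bysymmetry}, $U(x,r,y)\mapsto U(y,r,x)$ preserves $R_6$. The hypotheses of all three regions are symmetric in $x,y$, and membership in $\V$ is also symmetric by Proposition~\ref{prop:first-last-symmetry}. So in regions (1) and (2) I may assume $x\le y$, giving $x\le r$ in (1) and $y\ge r$ in (2).

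The natural candidates come from the spiral structure of $U$. First, $U$ is the product $X_{12}(y)\cdot W$ with $W$ a six-move word, and also $W'\cdot X_{12}(x)$. Second, the root certificates $G_{31}$ and $G_{12}$ found in Section~\ref{sec:critical-reduction} had double roots exactly on this family. I therefore expect the certificates to use either a different pair, such as $(2,3)$ or $(3,1)$ undone at the other end, or the same pairs with the root shifted because of the region's inequality.

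Concretely, I will compute $T_{ij}(U)$ from \eqref{eq:critical-family}. For example, $T_{11}=E=rx-r-3x+1$, which is negative when $x$ is small relative to $r$ or when $r<3$ in suitable ranges. I will then compute $\alpha,\beta,\widetilde H$ and $C_{ij}$, noting that $C_{ij}$ is a minimum of three ratios of entries. I will split each region according to which ratio attains the minimum, and check the inequalities on each piece. The thresholds $r=2$ and $r=3$ should appear as sign changes of minors such as $1+r-2\cdot(\cdot)$ or of $E$ at particular $x$.

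The main obstacle is the inequality $2\alpha C+\beta<0$ together with $\widetilde H\le 0$ on the boundary pieces, since these are genuinely degree-3 or degree-4 polynomial inequalities in three variables. I would first try substituting $x=r-u$ (region (1)) or $y=r+u$ (region (2)) with $u\ge0$, and similarly shifting $r$ to $r=1+s$, $r=3+s$, or $r=2+s$. The hope is to show the relevant polynomial has all coefficients of one sign after expansion, which gives a positivity proof by inspection. If a certificate fails near a corner, for instance at $x=y=r=2$ where regions (1) and (3) meet, I would switch to the other pair $(i,j)$ there and check that the two certificates overlap and cover the region.
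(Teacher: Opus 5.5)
Your framework is the one the paper uses: reduce by the symmetry $U(x,r,y)\leftrightarrow U(y,r,x)$, then in each region exhibit one quadratic certificate through Proposition~\ref{prop:reduced-Frydman-test} or Proposition~\ref{prop:last-move-quadratic-criterion}. But the proposal stops exactly where the content of the lemma begins. You never commit to a pair in any region, never determine which ratio realizes the admissible endpoint, and never verify a sign condition, so as it stands nothing is proved. The paper's proof \emph{is} these choices and checks:
in region (1), the last-move quadratic $G_{12}^U$, with $\alpha=(x+1)E$, $\beta=-2(x+1)(yE-x)$, discriminant $4x^2(r-2)(x+1)(rx-r-2x)$ and endpoint $(r+2y)/2$;
in region (2), the first-move pair $(2,1)$, with endpoint $2/(r+2x)$;
in region (3), the pair $(3,2)$, where the bound $T_{21}(U)<-6$ controls $\widetilde H_{32}(U)=(r+1)T_{21}(U)+16$.

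Beyond being incomplete, your verification strategy cannot succeed as stated. You propose to expand each condition in shifted variables and hope for one-signed coefficients on the region. But some of the inequalities the working certificate needs are false on the region and hold only because $U\notin\V$. In region (2), the $(2,1)$ certificate requires $T_{11}(U)=E=x(r-3)-(r-1)<0$, which fails for large $x$: at $(x,r,y)=(10,4,5)$ one has $E=7$. This point lies in $\V$, since $V_{12}(U)=(1+x)(1+y)(r-3)-1=65$. So it is excluded only by the hypothesis, not by the region's defining inequalities.

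The paper obtains $E<0$ from $V_{12}(U)<0$ together with the identity $V_{12}(U)-E=y(r-3)(x+1)+2r-5>0$. It obtains $\beta>0$ from $\Psi=2V_{12}(U)+(r-2)(-2y+r-4)<0$, again using $V_{12}(U)<0$. So the non-vestibule inequalities $V_{ij}(U)<0$ must enter your verification as side constraints, not merely as the hypothesis that makes the test applicable. Finally, the paper proves the claims on the open regions and gets the boundary pieces ($r=2$, $r=3$, $\min(x,y)=r$, and so on) from the relative closedness of $R_6$ (Proposition~\ref{prop:Rn-relatively-closed}), which spares checking the strict inequalities there.
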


\begin{proof}
Since $U\notin\V$, every $V_{ij}(U)$ is negative.

\smallskip
\noindent
\emph{Case (1).}
Assume first that $1<r<2$ and $\min(x,y)<r$. By the symmetry \eqref{bysymmetry}, we may assume that $y<r$. We apply the
last-move quadratic test from
Proposition~\ref{prop:last-move-quadratic-criterion} to
\[
G_{12}^U(t)=V_{21}(X_{12}(-t)U)=\alpha t^2+\beta t+\gamma,
\]
whose coefficients are
\[
\alpha=(x+1)E,
\qquad
\beta=-2(x+1)(yE-x),
\qquad
\gamma=V_{21}(U).
\]
Since 
\[
E=T_{11}(U)=x(r-3)-(r-1)<0,
\]
we have  $\alpha<0$ and $\beta>0$. Since $U\notin \V$, we also have $\gamma=V_{21}(U)<0$.
A direct calculation shows that
\[
\disc(G_{12}^U)
=4x^2(r-2)(x+1)(rx-r-2x).
\]
Since $r-2<0$ and
\[
rx-r-2x=x(r-2)-r<0,
\]
the discriminant $\disc(G_{12}^U)$ is positive.

The three candidates for the admissible endpoint are
\[
\frac{r+2y}{2},
\qquad
y+1,
\qquad
\frac{yx+y+r+x}{x+1}.
\]
Under the present hypotheses, the latter two are strictly larger than the first.
Hence the admissible endpoint is
\[
\bar c=\frac{r+2y}{2}.
\]
The vertex of $G_{12}^U$ is
\[
\tau=\frac{yE-x}{E},
\]
and
\[
\frac{r+2y}{2}-\tau
=\frac{(r-1)(rx-r-2x)}{2E}>0.
\]
Thus $G_{12}^U$ has a root in its admissible interval. By Proposition \ref{prop:last-move-quadratic-criterion}, $U\in R_6$. Proposition~\ref{prop:Rn-relatively-closed} gives the
boundary cases.

\smallskip
\noindent
\emph{Case (2).}
Assume $r>3$ and, by symmetry, $y>r$. We apply the reduced Frydman test
in Proposition~\ref{prop:reduced-Frydman-test} with ordered pair
$(2,1)$. Write
\[
q_{21}^U(t)=\alpha t^2+\beta t+\gamma.
\]
Then
\[
\alpha=(x+1)(yx+y+r+x)E,
\qquad
\beta=-(x+1)\Psi,
\qquad
\gamma=V_{12}(U),
\]
where
\[
\Psi=2V_{12}(U)+(r-2)(-2y+r-4).
\]
Since $V_{12}(U)<0$, and $y>r>3$, we clearly have $\Psi<0$.
Since
\[
V_{12}(U)-E=y(r-3)(x+1)+2r-5>0,
\]
we also have $E<V_{12}(U)<0$. Hence $\alpha<0$, $\beta>0$, and $\gamma<0$.

The complementary minor is $T_{13}(U)=2-r<0$, and
\[
\begin{aligned}
-\widetilde H_{21}(U)={}&\bigl((r-2)(2y+r)^2-8y-8\bigr)x\\
&+(r-2)(2y+r)^2-8y-8r.
\end{aligned}
\]
The quantity
\[
(r-2)(2y+r)^2-8y
\]
increases with $y$ and is larger than
$8r$ at $y=r$.  Thus $\widetilde H_{21}(U)<0$.

The admissible endpoint is
\[
C_{21}=\frac{2}{r+2x}.
\]
The other candidates are
\[
\frac{y+1}{yx+y+r+x}
\qquad\text{and}\qquad
\frac{1}{x+1},
\]
and direct subtraction gives the required ordering. If
$\tau=-\beta/(2\alpha)$, then
\[
\frac2{r+2x}-\tau
=-\frac{(r-2)(2yrx-2yx+2y+r^2+2r+2x)}
{2(r+2x)(yx+y+r+x)E}>0.
\]
The reduced Frydman test gives $U\in R_6$.
Proposition~\ref{prop:Rn-relatively-closed} gives the boundary cases.

\smallskip
\noindent
\emph{Case (3).}
Assume $2<r<3$ and, by symmetry, $r<y\leq x$. We apply the reduced
Frydman test with ordered pair $(3,2)$. For
\[
q_{32}^U(t)=\alpha t^2+\beta t+\gamma,
\]
one has
\[
\alpha=2(r+1)(y(r-3)-r+1),
\qquad
\beta=-(r+1)\Phi,
\qquad
\gamma=V_{23}(U),
\]
where
\[
\Phi=V_{12}(U)-2\bigl((r-2)(x-y)+2(r-2)+2\bigr)<0.
\]
Thus $\alpha<0$, $\beta>0$, and $\gamma<0$.

Put
\[
\Gamma=T_{21}(U)
=yrx-yr-3yx+y-rx+r+x.
\]
Note that $\Gamma$ is affine in each of $x$ and $y$ separately, and that, for fixed \(2<r<3\), we have
\[
\frac{\partial \Gamma}{\partial x}
=yr-3y-r+1
=y(r-3)-r+1<0
\]
and
\[
\frac{\partial \Gamma}{\partial y}
=rx-r-3x+1
=x(r-3)-r+1<0.
\]
Thus \(\Gamma\) is strictly decreasing in
both \(y\) and \(x\). Since \(r<y\leq x\), it follows that
\[
\Gamma<\Gamma|_{y=x=r}
=r^3-5r^2+3r.
\]
Moreover, on the range $2< r \leq 3$ we have $r^3-5r^2+3r<-6$.
Therefore,
$\Gamma<-6$. Consequently, 
\[
T_{21}(U)=\Gamma<0,
\]
and, since \(r+1>3\),
\[
\widetilde H_{32}(U)
=(r+1)\Gamma+16
<-6(r+1)+16<0.
\]

The admissible endpoint is the smaller of
\[
\frac{yx+y+r+x}{2y+r}
\qquad\text{and}\qquad
\frac{x+1}{2}.
\]
The third candidate is larger
than both, since
\[
\frac{r+2x}{r+1}-\frac{yx+y+r+x}{2y+r}
=-\frac{\Gamma}{(2y+r)(r+1)}>0
\]
and
\[
\frac{r+2x}{r+1}-\frac{x+1}{2}
=-\frac{E}{2(r+1)}>0.
\]
For
$\tau=-\beta/(2\alpha)$,
\[
\frac{x+1}{2}-\tau
=\frac{\Gamma+2}{4(y(r-3)-r+1)}>0,
\]
\[
\frac{yx+y+r+x}{2y+r}-\tau
=-\frac{(r-4-2y)\Gamma}
{4(2y+r)(y(r-3)-r+1)}>0.
\]
The reduced Frydman test again gives $U\in R_6$, and
Proposition~\ref{prop:Rn-relatively-closed} gives the boundary cases.
\end{proof}

\begin{lemma}
\label{lem:X12-persistence}
Suppose that $U\notin R_6$ and either one of the following two cases occurs: 
\begin{enumerate}[(1)]
\item	
$1<r<2$ and  $y>r$, 
\item 
$r>2$ and $y<r$.  
\end{enumerate}
Then $X_{23}(h)U\in R_7$ for all sufficiently small $h\geq0$.
\end{lemma}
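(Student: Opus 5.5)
We use the matched $X_{12}$-certificate, as the section heading suggests. Write
\[
U=X_{12}(y)W,\qquad W=X_{31}(1)X_{23}(1)X_{12}(r)X_{31}(1)X_{23}(1)X_{12}(x).
\]
Then
\[
X_{23}(h)U=X_{23}(h)X_{12}(y)W=X_{12}(y')\,W_h
\]
for suitable $y'$ and a six-move-type matrix $W_h$. It therefore suffices to show that the matrix $B_h=X_{12}(-s)X_{23}(h)U$ lies in $R_6$ for some admissible $s$ near $y$ and all small $h\ge0$. Then $X_{23}(h)U=X_{12}(s)B_h\in R_7$.

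The natural route uses the last-move quadratic $G_{12}$. The matrix $B_0=X_{12}(-y)U=W$ is a six-move spiral product. Removing its last move $X_{31}(1)$ gives a positive five-move product $A'$ with a vestibule equality. So we look for a certificate of $W\in R_6$ that persists under perturbation.

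Concretely, consider the function
\[
s\longmapsto G_{31}^{X_{12}(-s)X_{23}(h)U}(t).
\]
By the derivative computation preceding Theorem~\ref{thm:critical-family-reduction}, at $h=0$, $s=y$ the root $t=1$ of $G_{31}^{W}$ is a double root, since $c_5=c_2$ there. We therefore cannot argue with a simple root directly. Instead we use the extra parameter $s$: we look for $s=s(h)$ such that the quadratic
\[
G_{31}^{X_{12}(-s)X_{23}(h)U}
\]
has nonnegative discriminant and its vertex lies inside the admissible interval. This is exactly the membership test of Proposition~\ref{prop:last-move-quadratic-criterion}, with the endpoint negativity already automatic.

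The key steps, in order, are as follows.

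First, compute $\Delta(s,h)=\disc\bigl(G_{31}^{X_{12}(-s)X_{23}(h)U}\bigr)$. By Lemma~\ref{lem:last-move-Frydman-discriminants} this equals the discriminant of a Frydman quadratic $q_{12}$. Lemma~\ref{lem:Frydman-discriminant-factorization} then factors it as $a_{11}T_{23}\widetilde H_{12}$, evaluated at $M_{s,h}=X_{12}(-s)X_{23}(h)U$. At $(s,h)=(y,0)$ we have $\Delta=0$.

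Second, compute $\partial_s\Delta$ and $\partial_h\Delta$ at $(y,0)$ as explicit rational functions of $x,r,y$. The factor $(r-2)$ will appear, in the style of Lemma~\ref{lem:R6-elimination}, and the relevant sign will be that of $(r-2)(y-r)$. Under hypothesis (1) or (2) this sign is negative, and it should let us move $s$ off $y$, in the appropriate direction, so that $\Delta(s,h)>0$ for small $h\ge0$. Here $s$ is chosen as $y\pm K h$ with $K$ large, or even $s$ slightly away from $y$ independently of $h$.

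Third, check that the other conditions of the criterion are strict at $(y,0)$. These are $\alpha^G<0$, $\beta^G>0$, and the vertex lying strictly inside $(0,S_{31})$; they hold because the vertex is the double root $t=1$, which is interior. By continuity they persist for nearby $(s,h)$. Also check that $M_{s,h}>0$ and that $s<S_{12}(X_{23}(h)U)$; the latter holds because $y$ is strictly admissible.

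Then $M_{s,h}\in\V\cup\NH\subseteq R_6$, and hence $X_{23}(h)U\in R_7$.

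The main obstacle is the second step. We must verify that the first-order expansion of $\Delta$ in $s$ is nondegenerate and has the sign that produces $\Delta>0$ rather than $\Delta<0$, precisely in the regions $1<r<2,\ y>r$ and $r>2,\ y<r$. This is where the hypotheses must enter, presumably through a factor like $(r-2)(y-r)$ or $(r-2)(rx-r-2x)$. If $\partial_s\Delta$ vanishes at $(y,0)$, the fallback is a second-order expansion in $s$, together with the sign of $\partial_h\Delta$. The hypothesis $U\notin R_6$ is used only to ensure $U\notin\V$, so that the endpoint values are negative and Proposition~\ref{prop:last-move-quadratic-criterion} applies. The two cases (1) and (2) are handled by the same computation, with the sign bookkeeping reversed.
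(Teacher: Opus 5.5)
Your setup matches the paper's. The matrix $X_{12}(-s)X_{23}(h)U$ is the paper's $Q_h(s)$, and the base point is $W=Q_0(y)$. Using the last-move quadratic $G_{31}$ in place of the paper's first-move quadratic $q_{12}$ is harmless: one checks $G_{31}^W(t)=-r(r-1)(t-1)^2$ with $1<S_{31}(W)$. By Lemma~\ref{lem:last-move-Frydman-discriminants} and Lemma~\ref{lem:Frydman-discriminant-factorization}, your $\Delta(s,h)$ equals $a_{11}T_{23}\,\widetilde H_{12}(M_{s,h})$, and $a_{11}T_{23}\to -r<0$. So everything hinges on making $\widetilde H_{12}(M_{s,h})\le 0$.

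The gap is your second step. It is the only place the hypotheses enter, you have not carried it out, and the mechanism you anticipate is wrong. Since $M_{s,0}=X_{12}(y-s)W=U(x,r,y-s)$, one gets $\widetilde H_{12}(M_{s,0})=4(r-2)(s-y)^2$. Hence $\partial_s\Delta(y,0)=0$: there is no first-order term in $s$ and no ``direction'' in which to push $s$. In $h$, one has $\widetilde H_{12}(M_{y,h})=4(r-1)(y-r)h+O(h^2)$.

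So the two cases need different choices of $s$, not one computation with reversed signs.
\begin{itemize}
\item[(2)] Here $s=y$ itself works, as does any $s=y+O(h)$. A fixed offset $s=y+\delta$ fails, since then $\widetilde H_{12}\to 4(r-2)\delta^2>0$.
\item[(1)] Here your choice $s=y\pm Kh$ fails, because it gives $\widetilde H_{12}(M_{s,h})=4(r-1)(y-r)h+O(h^2)>0$. You need $|s-y|$ at least of order $\sqrt h$, for example a fixed small offset, which works because $r<2$ makes $\widetilde H_{12}(M_{s,h})$ concave in $s$.
\end{itemize}

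The paper handles both cases uniformly. The function $\mathcal N_h(s)=\widetilde H_{12}(Q_h(s))$ is exactly quadratic in $s$, with $\mathcal N_0(s)=4(r-2)(s-y)^2$. Its discriminant $\disc_s(\mathcal N_h)$ vanishes at $h=0$, with derivative $64(r-y)(r-2)(r-1)>0$ in both regions. So $\mathcal N_h$ has real roots $s(h)\to y$; at such a root $\widetilde H_{12}=0$, and the four strict inequalities persist by continuity. Supplying either this computation or the case split above would close your argument.
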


\begin{proof}
	Put
	\[
	U_h=X_{23}(h)U,
	\qquad
	Q_h(s)=X_{12}(-s)U_h.
	\]
	We show below that for all sufficiently small $h$ there exists $s\geq 0$ such that $Q_h(s)\in R_6$, which in turn shows that $U_h\in R_7$.
	To this end we apply to $Q_h(s)$ the reduced Frydman test of
	Proposition~\ref{prop:reduced-Frydman-test} for the ordered pair
	$(1,2)$.
	
	Set
	\[
	\mathcal N_h(s)=\widetilde H_{12}(Q_h(s)).
	\]

	We first examine the limiting case $h=0$.  Taking $s=y$ removes the
	leftmost factor $X_{12}(y)$ from $U$, giving
	\[
	B:=Q_0(y)
	=
	X_{31}(1)X_{23}(1)X_{12}(r)
	X_{31}(1)X_{23}(1)X_{12}(x).
	\]
	Thus $B\in R_6$.  Moreover, $B\in\NH$. Indeed, $B>0$, so $B\notin\T$. If
	$B\in\V$, then Corollary~\ref{cor:no-exit-vestibule} would give
	$U=X_{12}(y)B\in R_6$, 	contrary to the hypothesis of the lemma. Hence $B\notin\T\cup\V$,
	and therefore $B\in\NH$.
	
	For the particular $(1,2)$-certificate, all four strict inequalities
	in Proposition~\ref{prop:reduced-Frydman-test} hold at $B$, while the
	discriminant factor condition holds with equality.  Indeed,
	\[
	q_{12}^B(t)=-(r-1)(t-x)^2,
	\]
	and
	\[
	C_{12}(B)
	=
	\min\left\{
	r+x,\,1+x,\,x+\frac r2
	\right\}>x.
	\]
	Since $r>1$, we have $T_{22}(B)=1-r<0$. The leading coefficient and linear coefficient of
	$q_{12}^B$ satisfy
	\[
	\alpha_{12}=1-r<0,
	\qquad
	\beta_{12}=2x(r-1)>0.
	\]
	Moreover,
	\[
	2\alpha_{12}C_{12}(B)+\beta_{12}
	=
	-2(r-1)\bigl(C_{12}(B)-x\bigr)<0,
	\]
	and a direct calculation gives
	\[
	T_{23}(B)=-r<0.
	\]
	Finally,
	\[
	\widetilde H_{12}(B)=\mathcal N_0(y)=0.
	\]
	Thus all four strict inequalities in
	Proposition~\ref{prop:reduced-Frydman-test} hold at $B$, while the
	reduced discriminant condition holds with equality.
	
	It remains only to show that, for every sufficiently small $h>0$, one
	can choose $s=s(h)$ with
	\[
	s(h)\to y
	\qquad\text{and}\qquad
	\mathcal N_h(s(h))=0.
	\]
	The polynomial $\mathcal N_h(s)$ is quadratic in $s$.  The explicit
	coefficients are recorded in Appendix~\ref{app:X12-persistence-formulas}.
	In particular,
	\[
	\mathcal N_0(s)=4(r-2)(s-y)^2.
	\]
	Thus $s=y$ is a double root at $h=0$.  If
	\[
	\Delta(h)=\disc_s\bigl(\mathcal N_h(s)\bigr),
	\]
	the formulas in Appendix~\ref{app:X12-persistence-formulas} give
	\[
	\Delta(0)=0,
	\qquad
	\Delta'(0)=64(r-y)(r-2)(r-1).
	\]
	This derivative is positive in both regions of the statement.
	Furthermore, the quadratic coefficient of $\mathcal N_h$ tends to
	$4(r-2)\neq0$.  Hence, for every sufficiently small $h>0$,
	$\mathcal N_h$ has two real roots, and both roots converge to $y$ as
	$h\to0^+$.
	
	Choose either root and call it $s(h)$.  Then
	\[
	s(h)\to y>0,
	\qquad
	Q_h(s(h))\to B>0,
	\]
	so $s(h)>0$ and $Q_h(s(h))>0$ for all sufficiently small $h>0$.
	By construction,
	\[
	\widetilde H_{12}(Q_h(s(h)))=0.
	\]
	The four strict inequalities verified at $B$ persist for
	$Q_h(s(h))$ when $h$ is sufficiently small.
	
	If $Q_h(s(h))\in\V$, then $Q_h(s(h))\in R_6$.  Otherwise
	Proposition~\ref{prop:reduced-Frydman-test} applies and gives
	$Q_h(s(h))\in\NH\subseteq R_6$.  Finally,
	\[
	U_h=X_{12}(s(h))Q_h(s(h)),
	\]
	so $U_h\in R_7$ for all sufficiently small $h>0$.  The case $h=0$
	follows from the  seven-factor representation of $U$.
\end{proof}

\begin{corollary}
\label{cor:remaining-hard-cell}
To prove forward $X_{23}$-persistence for the critical family, it is
enough to treat
\[
2<r<3,
\qquad
0<x<r\leq y,
\qquad
U(x,r,y)\notin R_6.
\]\end{corollary}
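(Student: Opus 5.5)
The plan is a direct case analysis on $r$ that combines Lemma~\ref{lem:small-r-vestibule}, Lemma~\ref{lem:R6-elimination} and Lemma~\ref{lem:X12-persistence}. Fix $x,r,y>0$. We must produce $\varepsilon>0$ with $X_{23}(h)U\in R_7$ for $0\leq h<\varepsilon$.

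First dispose of the case $U\in R_6$. Then $X_{23}(h)U\in R_7$ for every $h\geq0$, because it is a product of at most seven elementary factors up to diagonal equivalence. So assume from now on that $U\notin R_6$. In particular $U\notin\V$, since $\V\subseteq R_6$ by Theorem~\ref{thm:homogeneous-finite-move}(iv). This is exactly the hypothesis needed to invoke Lemma~\ref{lem:R6-elimination}. By Lemma~\ref{lem:small-r-vestibule} we then also have $r>1$.

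Next split according to the value of $r$.
\begin{enumerate}[(a)]
\item If $1<r<2$ and $y>r$, Lemma~\ref{lem:X12-persistence}(1) gives the required persistence.
\item If $1<r<2$ and $y\leq r$, then $\min(x,y)\leq r$. Lemma~\ref{lem:R6-elimination}(1) gives $U\in R_6$, which is excluded.
\item If $r=2$, either $\min(x,y)\leq 2$ or $\min(x,y)\geq2$. Lemma~\ref{lem:R6-elimination}(1) or (3), respectively, places $U$ in $R_6$, which is again excluded.
\item If $r>2$ and $y<r$, Lemma~\ref{lem:X12-persistence}(2) applies.
\item If $r>2$ and $y\geq r$, we argue as follows. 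When $r\geq3$ we have $\max(x,y)\geq y\geq r$, so Lemma~\ref{lem:R6-elimination}(2) gives $U\in R_6$, which is excluded; hence $2<r<3$. If moreover $x\geq r$, then $\min(x,y)\geq r$ and Lemma~\ref{lem:R6-elimination}(3) gives $U\in R_6$, which is excluded. Hence $x<r$.
\end{enumerate}

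The only configuration not settled by cases (a)--(e) is therefore $2<r<3$, $0<x<r\leq y$, $U\notin R_6$, which is the stated cell. Combined with Theorem~\ref{thm:critical-family-reduction}, treating this cell suffices.

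There is no genuine obstacle here. The only care needed is at the boundary values: $r=2$ is covered by the closed ranges in Lemma~\ref{lem:R6-elimination}(1) and (3), and $r=3$ by the closed range $r\geq3$ in Lemma~\ref{lem:R6-elimination}(2). Also, the symmetry \eqref{bysymmetry} is not needed, since the relevant conditions are already stated in terms of $\min(x,y)$ or $\max(x,y)$.
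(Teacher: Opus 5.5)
Your case analysis is correct and is essentially the paper's own argument. You reduce to $U\notin R_6$ (hence $U\notin\V$ and $r>1$), use Lemma~\ref{lem:R6-elimination} to exclude the configurations it covers, and invoke Lemma~\ref{lem:X12-persistence} when $1<r<2$, $y>r$ or $r>2$, $y<r$, which leaves exactly the displayed cell. The only differences are cosmetic: you handle $r>2$, $y<r$ in one step rather than splitting at $r=3$, and you make explicit that $\V\subseteq R_6$ supplies the hypothesis $U\notin\V$ of Lemma~\ref{lem:R6-elimination}.
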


\begin{proof}
Assume that $U\notin R_6$. Lemma~\ref{lem:small-r-vestibule} gives
$r>1$. If $1<r<2$, Lemma~\ref{lem:R6-elimination}(1) forces $y>r$, so
Lemma~\ref{lem:X12-persistence} applies. If $r=2$, parts (1) and (3) of
Lemma~\ref{lem:R6-elimination} cover all parameters. If $2<r<3$, part
(3) shows that either $y<r$, in which case
Lemma~\ref{lem:X12-persistence} applies, or $x<r\leq y$, which is the displayed cell. 
Finally, if $r\geq3$,
part (2) forces $y<r$, and Lemma~\ref{lem:X12-persistence} applies.
\end{proof}

\subsection{\texorpdfstring{The $X_{23}$-certificate}{The X23-certificate}}
\label{subsec:X23-certificate}

By Corollary~\ref{cor:remaining-hard-cell}, it remains to consider
\[
2<r<3,
\qquad
0<x<r\leq y,
\qquad
U:=U(x,r,y)\notin R_6.
\]
These are the standing hypotheses throughout this subsection.

Put
\[
E=T_{11}(U)=rx-r-3x+1<0.
\]
We shall also use
\[
T_{21}(U)=yrx-yr-3yx+y-rx+r+x<0.
\]
Indeed, on writing $u=r-2$ and $w=y-r$, one has
\[
T_{21}(U)
=x\bigl(u^2+uw-w-3\bigr)
 -\bigl(u^2+uw+2u+w\bigr)<0.
\]

For $g\leq0$, put
\[
U_g=X_{23}(g)U.
\]
Thus $g=0$ gives $U$, while $g<0$ subtracts a positive
multiple of the third row from the second.  Explicitly,
\[
U_g=
\begin{pmatrix}
1+y & x(1+y)+r+y & r+2y\\
1+2g & 1+x+(r+2x)g & 2+(r+1)g\\
2 & r+2x & r+1
\end{pmatrix}.
\]

\begin{lemma}
\label{lem:outer-admissible-interval}
The matrix $U_g$ is positive precisely for
\[
g_{\min}<g\leq0,
\qquad
g_{\min}=-\frac{x+1}{r+2x}.
\]
Moreover, no point $U_g$, $g_{\min}<g<0$, belongs to the
vestibule.
\end{lemma}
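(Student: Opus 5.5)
The plan has two parts: an explicit comparison of three thresholds for positivity, and a structural argument for the vestibule claim that uses the standing hypothesis $U\notin R_6$ and so avoids computing $V_{ij}(U_g)$ directly.

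\emph{Positivity.} Only the second row of $U_g$ depends on $g$. The first and third rows of $U$ are positive for $x,r,y>0$. The second-row entries are affine in $g$, with positive value at $g=0$ and positive slope. So $U_g>0$ exactly when $g$ exceeds the largest of the three thresholds
\[
-\frac12,\qquad -\frac{x+1}{r+2x},\qquad -\frac{2}{r+1}.
\]
These compare as follows.
\begin{itemize}
\item $\frac{x+1}{r+2x}<\frac12$ is equivalent to $2x+2<r+2x$, which holds since $r>2$.
\item $\frac{x+1}{r+2x}<\frac{2}{r+1}$ is equivalent to $(x+1)(r+1)<2(r+2x)$. Rearranged, this reads $rx-3x-r+1<0$, i.e.\ $E<0$, which is a standing fact.
\end{itemize}
Hence the binding threshold is $g_{\min}=-(x+1)/(r+2x)$, and $U_g>0$ precisely for $g_{\min}<g\leq0$.

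\emph{No vestibule points.} Suppose $U_g\in\V$ for some $g_{\min}<g<0$. Then
\[
U=X_{23}(-g)\,U_g=X_{23}(|g|)\,U_g
\]
is obtained from a vestibule matrix by a single last move with positive parameter. Corollary~\ref{cor:no-exit-vestibule} then gives $U\in R_6$. This contradicts the standing hypothesis $U\notin R_6$ of this subsection, so no such $g$ exists.

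The same argument shows $U_g\notin\T$. This also follows directly from positivity, since $\T$ consists of matrices with a zero entry.

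\emph{Where care is needed.} The only real computation is the threshold comparison, and it reduces to the sign of $E$ already recorded. The vestibule statement is then immediate from the last-move description of $\NH$ packaged in Corollary~\ref{cor:no-exit-vestibule}. Should a direct check be wanted instead, one could verify $V_{ij}(U_g)<0$ through Theorem~\ref{thm:homogeneous-finite-move}(ii). Each $V_{ij}(U_g)$ is a polynomial of degree at most $2$ in $g$, but its sign analysis would be considerably messier, so I would rely on the structural argument.
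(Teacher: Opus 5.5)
Your proposal is correct and follows the paper's proof essentially verbatim. You identify $g_{\min}$ by the same threshold comparison, using $r>2$ and $E<0$, and you settle the vestibule claim by applying Corollary~\ref{cor:no-exit-vestibule} to $U=X_{23}(-g)U_g$ against the standing hypothesis $U\notin R_6$.
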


\begin{proof}
The first and third rows of $U_g$ are fixed and positive.  Since $r>2$, the
second coordinate of the second  row of $U_g$ vanishes before either of the
other two:
\[
-\frac{x+1}{r+2x}>-\frac12,
\qquad
-\frac{x+1}{r+2x}>-\frac{2}{r+1},
\]
where the second inequality is equivalent to $E<0$.  This proves
the stated admissible interval.

If $U_g\in\V$ for some $g\in(g_{\min},0)$, then
$U=X_{23}(-g)U_g$ would belong to $R_6$ by Corollary~\ref{cor:no-exit-vestibule}, contrary to the
standing hypotheses.
\end{proof}

For $g\in(g_{\min},0)$, consider Frydman's quadratic for the
ordered pair $(2,3)$ at $U_g$:
\[
q_g(t):=q_{23}^{U_g}(t)
=\alpha(g)t^2+\beta(g)t+\gamma(g).
\]
Set
\[
m(g)=x+1+(r+2x)g>0.
\]

The following identities are direct applications of the formulas in
Section~\ref{sec:preliminaries}.

\begin{lemma}
	\label{lem:X23-reduced-data}
	For $g\in(g_{\min},0)$, the following hold:
\begin{enumerate}[(i)]	
\item
The admissible endpoint for $q_g$ is
	\[
	\bar c=\frac{r+1}{r+2x}.
	\]

\item
The coefficients of $q_g$ satisfy 
	\[
	\alpha(g)=(r+2x)m(g)T_{33}(U_g),
	\qquad
	\beta(g)=-2m(g)B(g),
	\qquad
	\gamma(g)=V_{32}(U_g),
	\]
	where
	\[
	\begin{aligned}
		B(g)={}&-(r^2+rx-r-2x)\\
		&+g\bigl(yr^2+yrx-2yr-3yx-y-r^2-rx+x\bigr).
	\end{aligned}
	\]
	
\item	
	The vertex-location expression satisfies
	\[
	2\alpha(g)\bar c+\beta(g)
	=2m(g)\bigl(T_{31}(U_g)+1\bigr).
	\]

\item	
The reduced discriminant factor from
	Lemma~\ref{lem:Frydman-discriminant-factorization} is
	\[
	\widetilde H_{23}(U_g)=4\Theta(g),
	\]
	where
	\[
	\Theta(g)=h_2g^2+h_1g+x^2(r-2).
	\]
	Here
	\[
	h_2=-(r+2x)T_{21}(U)>0,
	\]
	and the explicit formula for $h_1$, together with the remaining
	polynomial identities used below, is recorded in
	Appendix~\ref{app:X23-Frydman-formulas}.
\end{enumerate}	
\end{lemma}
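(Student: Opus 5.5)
This lemma is a direct computation, so the plan is to organize it so that each identity follows from the general formulas of Section~\ref{sec:preliminaries} applied to the explicit matrix $U_g$. I will work throughout with the explicit form of $U_g$ displayed above, whose first and third rows are independent of $g$ and whose second row is affine in $g$. I will first record the cofactor data I need as polynomials in $(x,r,y,g)$:
\begin{itemize}
\item $\det U_g=1$, because $X_{23}(g)$ is unimodular;
\item $T_{33}(U_g)$, $T_{31}(U_g)$, $T_{32}(U_g)$ and $T_{21}(U_g)$.
\end{itemize}
Since deleting row $2$ removes the only $g$-dependence, $T_{2\ell}(U_g)=T_{2\ell}(U)$ for every $\ell$. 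The minors $T_{3\ell}$ are affine in $g$.

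For (i), I will use $C_{23}(U_g)=\min_\ell a_{\ell3}/a_{\ell2}$ and compare the three row quotients. Row 3 gives $(r+1)/(r+2x)$. Row 1 gives $(r+2y)/(x(1+y)+r+y)$, and row 2 gives $(2+(r+1)g)/m(g)$. I will check by cross-multiplication that row 3 is the minimum. For row 2, the difference is affine in $g$, it is positive at $g=0$ exactly because $E<0$, and it stays positive down to $g_{\min}$, where $m(g)\to0^+$. For row 1, the comparison reduces to a polynomial inequality in $x,r,y$ that I expect to follow from $r>2$ and $y\geq r$, or from $T_{21}(U)<0$.

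For (ii), I will substitute into \eqref{frydmancoeffs} with $(i,j)=(2,3)$. This gives $\alpha=a_{32}a_{22}T_{33}$ with $a_{32}=r+2x$ and $a_{22}=m(g)$, so $\alpha$ is immediate. For $\beta$, I will use the row-expansion form from the proof of Proposition~\ref{prop:reduced-Frydman-test},
\[
\beta_{23}=-a_{22}\bigl(2a_{32}T_{32}+a_{31}T_{31}\bigr),
\]
which shows $B(g)=\tfrac12\bigl(2(r+2x)T_{32}(U_g)+2T_{31}(U_g)\bigr)$, since $a_{31}=2$. Expanding these affine minors should reproduce the stated $B(g)$.

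For (iii), the identity is
\[
2\alpha\bar c+\beta=2m(g)\bigl[(r+1)T_{33}-(r+2x)T_{32}-T_{31}\bigr].
\]
Using $D=1$ and the expansion of $D$ along row 3, namely $1=(r+1)T_{33}-2T_{31}-(r+2x)T_{32}$, the bracket equals $1+T_{31}$. This step is clean.

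For (iv), I will apply Lemma~\ref{lem:Frydman-discriminant-factorization} with $(i,j,k)=(2,3,1)$:
\[
\widetilde H_{23}=4a_{32}a_{21}D+a_{22}a_{31}^2T_{31}=4\bigl[(r+2x)(1+2g)+m(g)T_{31}(U_g)\bigr].
\]
This is quadratic in $g$. Its constant term $(r+2x)+(x+1)T_{31}(U)$ must reduce to $x^2(r-2)$, and its $g^2$ coefficient is $(r+2x)$ times the $g$-slope of $T_{31}$. I expect that slope to be $-T_{21}(U)$: adding $g$ times row 3 to row 2 changes the $(3,1)$ minor by a multiple of a row-3-containing minor, which is $\pm$ the row-2 deletion minor. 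This gives $h_2=-(r+2x)T_{21}(U)$, which is positive by the standing bound $T_{21}(U)<0$. The coefficient $h_1$ is then whatever remains, and I will record it in the appendix.

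The main obstacle is bookkeeping rather than ideas. The parts most likely to need care are the row-1 comparison in (i) and confirming the sign conventions for the modified minors $T_{ij}$ in the slope identity behind $h_2$.
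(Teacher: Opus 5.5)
Your proposal is correct and follows the paper's route. In (i) you make the same comparison of the three row quotients: the cross-multiplied row-1 difference is exactly $-T_{21}(U)>0$, so what you need is the standing bound $T_{21}(U)<0$ (which also uses $r<3$), not $r>2$ and $y\ge r$ alone, and the cross-multiplied row-2 difference is the constant $-E>0$. Parts (ii)--(iv) are the direct substitutions the paper invokes, and your row-3 cofactor expansion for (iii), the identity $T_{31}(U_g)=T_{31}(U)-g\,T_{21}(U)$ behind $h_2$, and the constant term $x^2(r-2)$ all check out.
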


\begin{proof}
	The three candidates for the admissible endpoint are
	\[
	\frac{r+2y}{x(1+y)+r+y},
	\qquad
	\frac{2+(r+1)g}{x+1+(r+2x)g},
	\qquad
	\frac{r+1}{r+2x}.
	\]
	The last of these is the smallest (under the standing hypotheses). Indeed,
	\[
	\frac{r+2y}{x(1+y)+r+y}
	-\frac{r+1}{r+2x}
	=
	\frac{-T_{21}(U)}
	{(r+2x)\bigl(x(1+y)+r+y\bigr)}>0,
	\]
	while
	\[
	\frac{2+(r+1)g}{x+1+(r+2x)g}
	-\frac{r+1}{r+2x}
	=
	\frac{-E}
	{(r+2x)\bigl(x+1+(r+2x)g\bigr)}>0.
	\]
	Hence
	\[
	\bar c=\frac{r+1}{r+2x}.
	\]
	
	The remaining identities follow by direct substitution into the
	formulas of Section~\ref{sec:preliminaries}.
\end{proof}

Let us show first that the leading-coefficient condition from Proposition \ref{prop:reduced-Frydman-test}
applied to the Frydman quadratic $q_g(t)$ holds throughout the 
admissible interval $(g_{\min},0)$.

\begin{lemma}
\label{lem:X23-concavity}
Under the standing hypotheses, for every $g\in(g_{\min},0)$, one has
\[
T_{33}(U_g)<0,
\qquad
\alpha(g)<0.
\]
\end{lemma}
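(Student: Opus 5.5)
Since $U_g$ depends affinely on $g$ and only its second row moves, $T_{33}(U_g)$ is an affine function of $g$. The plan is therefore to show that it is negative at both endpoints $g=g_{\min}$ and $g=0$ of the interval. Negativity on $(g_{\min},0)$ then follows by convexity, and the sign of $\alpha(g)$ follows from the factorization in Lemma~\ref{lem:X23-reduced-data}(ii).

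\emph{Step 1: the formula.} Deleting row $3$ and column $3$ of $U_g$ gives
\[
T_{33}(U_g)=(1+y)\bigl(1+x+(r+2x)g\bigr)-\bigl(x(1+y)+r+y\bigr)(1+2g)
=(1+y)\,m(g)-\bigl(x(1+y)+r+y\bigr)(1+2g).
\]
This is affine in $g$.

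\emph{Step 2: the endpoint $g=0$.} Here
\[
T_{33}(U)=(1+y)(1+x)-x(1+y)-r-y=1-r<0,
\]
because $r>2$.

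\emph{Step 3: the endpoint $g=g_{\min}$.} By definition $m(g_{\min})=0$, so the first term vanishes. Also
\[
1+2g_{\min}=\frac{r+2x-2x-2}{r+2x}=\frac{r-2}{r+2x}>0.
\]
Hence
\[
T_{33}(U_{g_{\min}})=-\bigl(x(1+y)+r+y\bigr)\frac{r-2}{r+2x}<0.
\]
This uses $r>2$ from the standing hypotheses.

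\emph{Step 4: conclusion.} An affine function that is negative at both endpoints of a closed interval is negative throughout it, so $T_{33}(U_g)<0$ for all $g\in(g_{\min},0)$. By Lemma~\ref{lem:X23-reduced-data}(ii), $\alpha(g)=(r+2x)\,m(g)\,T_{33}(U_g)$. For $g>g_{\min}$ we have $m(g)>0$, and also $r+2x>0$, so $\alpha(g)<0$.

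The only possible obstacle is the sign at $g_{\min}$. Naively it depends on the slope $y(r-2)-r$, whose sign is not controlled. It is resolved by observing that $m(g_{\min})=0$ kills the first term, which leaves a manifestly negative quantity.
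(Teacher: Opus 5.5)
Your proof is correct and follows the paper's argument exactly: $T_{33}(U_g)$ is affine in $g$, equals $1-r<0$ at $g=0$ and $-\frac{(r-2)(yx+y+r+x)}{r+2x}<0$ at $g=g_{\min}$, and the sign of $\alpha(g)$ then follows from $\alpha(g)=(r+2x)m(g)T_{33}(U_g)$ with $m(g)>0$. Your observation that $m(g_{\min})=0$ kills the first term is a clean way to obtain the endpoint value that the paper simply states.
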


\begin{proof}
The modified minor $T_{33}(U_g)$ is affine in $g$.  At the two endpoints,
\[
T_{33}(U)=1-r<0
\]
and
\[
T_{33}(U_{g_{\min}})
=-\frac{(r-2)(yx+y+r+x)}{r+2x}<0.
\]
Hence it is negative throughout the interval.  The formula for
$\alpha(g)$ and the positivity of $m(g)$ give the conclusion.
\end{proof}

Observe that the reduced discriminant factor $\Theta(g)$ is a quadratic in $g$.
We let
\[
g_*=-\frac{h_1}{2h_2}
\]
denote the vertex of the quadratic $\Theta(g)$.

In the following proposition we establish that $g_*$ lies in the admissible $(g_{\min},0)$, that the discriminant of $q_{g_*}(t)$ is positive, and that the vertex-location condition for $q_{g_*}(t)$ has the required sign.

\begin{proposition}
\label{prop:X23-automatic-signs}
Under the standing hypotheses,
\[
g_{\min}<g_*<0,
\qquad
\Theta(g_*)<0,
\qquad
T_{31}(U_{g_*})+1<0.
\]
\end{proposition}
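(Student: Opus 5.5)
The plan is a direct sign analysis on the standing cell $2<r<3$, $0<x<r\le y$. We use the reparametrization $u=r-2\in(0,1)$, $w=y-r\ge0$ that already appeared in the proof of $T_{21}(U)<0$. All three quantities are rational functions of $(x,u,w)$ built from the explicit $h_1,h_2$ in Appendix~\ref{app:X23-Frydman-formulas}. The goal for each is to clear positive denominators and exhibit a polynomial whose sign is evident on the cell, ideally a polynomial in $x,u,w$ with coefficients of one sign. Where no such form appears directly, we split according to $x\lessgtr 1$ or $u\lessgtr 1/2$.

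\emph{Location of the vertex.} Since $h_2=-(r+2x)T_{21}(U)>0$ by Lemma~\ref{lem:X23-reduced-data}(iv), $g_*<0$ is equivalent to $h_1>0$. We expect $h_1$ to be a polynomial that, in the variables $(x,u,w)$, has manifestly positive terms, possibly after pulling out a factor such as $-E$ or $-T_{21}(U)$. The inequality $g_*>g_{\min}$ is equivalent to $h_1<2h_2(x+1)/(r+2x)=-2(x+1)T_{21}(U)$. Equivalently, it says $\Theta'(g_{\min})<0$, i.e.\ $2h_2g_{\min}+h_1<0$. So we would compute $\Theta'(g_{\min})$ explicitly and check that it is negative, again via the substitution.

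\emph{Negativity of $\Theta(g_*)$.} We have $\Theta(g_*)=x^2(r-2)-h_1^2/(4h_2)$, so the claim is $h_1^2>4h_2x^2(r-2)$, i.e.\ $\disc_g\Theta>0$. A cleaner route avoids squaring. Exhibit one explicit $g_0\in(g_{\min},0)$ with $\Theta(g_0)<0$. The natural candidate is $g_0=g_{\min}$ itself, or a point where $U_g$ has a simple structure, such as where $T_{31}(U_g)+1=0$. Since $\Theta$ is convex, $\Theta(g_*)\le\Theta(g_0)<0$. The conceptual source for this negativity is that $U\notin R_6$ while $U_g\notin\V$ (Lemma~\ref{lem:outer-admissible-interval}). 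If $\Theta$ were nonnegative throughout, nothing would be contradicted, so this must come from computation rather than from membership. We would therefore evaluate $\Theta(g_{\min})$ in closed form and look for a factorization involving $T_{21}(U)$, $E$, and $r-2$.

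\emph{Vertex-location sign.} $T_{31}(U_g)$ is affine in $g$. Hence $T_{31}(U_{g_*})+1$ is a rational expression, $(h_2\,T_{31}(U)-\tfrac12 h_1\,\partial_gT_{31}+h_2)/h_2$. Its numerator is a polynomial in $(x,u,w)$, and we would show it is negative on the cell. Since $\beta(g_*)>0$ and $T_{23}<0$ are not claimed here, only this numerator needs work.

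We expect the main obstacle to be the inequality $\Theta(g_*)<0$: $h_1^2-4h_2x^2u$ has high degree and mixed signs. We would first try the single-point witness $g_0=g_{\min}$. If that fails near the corner $u\to0$ or $w\to0$, we would take $g_0$ depending on $(x,u,w)$, for instance $g_0=-x/(r+2x)$, so that $\Theta(g_0)$ factors.
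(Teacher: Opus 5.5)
Your reductions for $g_*<0$, for $g_*>g_{\min}$ and for $T_{31}(U_{g_*})+1<0$ are the paper's. These are $h_1>0$; $\Theta'(g_{\min})<0$, which is the paper's $N_{\min}>0$ because $N_{\min}=-(r+2x)\Theta'(g_{\min})$; and the sign of the numerator $2h_2\bigl(T_{31}(U_{g_*})+1\bigr)$. As written, however, these are only promised, and the content of the proof is the sign-evident forms. They do exist, and no case split in $x$ or $u$ is needed:
\begin{itemize}
\item $h_1=x\bigl((3+2u-u^2)x+u^2+2u+3\bigr)+w\bigl((1-u)x^2+2x+u+1\bigr)$;
\item $\Theta'(g_{\min})=(u-1)(u+3)x^2+(u-3)(u+1)x-2u(u+2)+w(x+1)E$;
\item the numerator collapses to $-(yx+y+r+x)E\,T_{21}(U)$, because both $h_2$ and $\partial_gT_{31}(U_g)=-T_{21}(U)$ carry the factor $T_{21}(U)$.
\end{itemize}

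For $\Theta(g_*)<0$, your first witness $g_0=g_{\min}$ fails for every choice of parameters, not just near corners. Since $\det U_g=1$, Lemma~\ref{lem:Frydman-discriminant-factorization} gives
\[
\Theta(g)=(r+2x)(1+2g)+m(g)\,T_{31}(U_g),\qquad T_{31}(U_g)=rx-r-2x-g\,T_{21}(U).
\]
Since $m(g_{\min})=0$, this yields $\Theta(g_{\min})=r-2>0$.

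Your fallback $g_0=-x/(r+2x)$ does work. Here $m(g_0)=1$ and $(r+2x)(1+2g_0)=r$, so
\[
\Theta(g_0)=\frac{x\bigl((r-2)(r+2x)+T_{21}(U)\bigr)}{r+2x}=\frac{x\,H_-}{r+2x},\qquad H_-=x(y+1)(r-3)-(r-1)(y-r)<0.
\]
Since $g_{\min}<g_0<0$ and $h_2>0$, $\Theta$ is convex with minimizer $g_*$, so $\Theta(g_*)\le\Theta(g_0)<0$.

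This is a genuinely different and lighter route than the paper's. The paper computes $\disc_g\Theta=EH_+H_-$ and must sign all three factors, including a lower bound for $H_+$ that uses $y\geq r$. Your witness needs only $H_-<0$ and no discriminant at all. So the step you expected to be the main obstacle is actually the easiest, once $\Theta$ is written via the discriminant-factor lemma. You should commit to $g_0=-x/(r+2x)$ and drop $g_{\min}$.
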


\begin{proof}
Write $u=r-2$ and $w=y-r$, so that
$0<u<1$ and $w\geq 0$.  The coefficient $h_1$ takes the form
\[
\begin{aligned}
h_1={}&x\Bigl((3+2u-u^2)x+(u^2+2u+3)\Bigr)\\
&+w\Bigl((1-u)x^2+2x+u+1\Bigr)>0.
\end{aligned}
\]
Since $h_2=-(r+2x)T_{21}(U)>0$, it follows that $g_*<0$.

Put
\[
N_{\min}=2h_2(x+1)-h_1(r+2x).
\]
Then
\[
g_*-g_{\min}
=\frac{N_{\min}}{2h_2(r+2x)}.
\]
A direct simplification gives
\[
\begin{aligned}
N_{\min}=-(r+2x)\Bigl(&
(u-1)(u+3)x^2+(u-3)(u+1)x\\
&-2u(u+2)+w(x+1)E\Bigr).
\end{aligned}
\]
The first three terms inside the parentheses are strictly negative,
while $w(x+1)E\leq0$. Hence $N_{\min}>0$. Thus $g_*>g_{\min}$.

The discriminant of $\Theta$, viewed as a quadratic in $g$, is
\[
\disc_g(\Theta)=EH_+H_-,
\]
where
\begin{align*}
H_+ &=y(x+1)^2+rx-r+x^2-3x,\\
H_- &=x(y+1)(r-3)-(r-1)(y-r).
\end{align*}
Here $H_-<0$, as $2< r<3$ and $r\leq y$, while
\[
H_+\geq r(x+1)^2+rx-r+x^2-3x
=(r+1)x^2+3(r-1)x>0.
\]
Since $E<0$, the discriminant is positive.  The quadratic
$\Theta$ is convex and $g_*$ is its vertex, so
$\Theta(g_*)<0$.

Direct calculation shows that
\[
2h_2\bigl(T_{31}(U_{g_*})+1\bigr)
=-(yx+y+r+x)E\,T_{21}(U).
\]
Both $E$ and $T_{21}(U)$ are negative, so the right-hand side is
negative.  Hence $T_{31}(U_{g_*})+1<0$.
\end{proof}

Next we handle the sign of the linear term of the Frydman quadratic $q_{g_*}(t)$.

\begin{proposition}
\label{prop:B-tangency-order}
One has $\beta(g_*)>0$.
\end{proposition}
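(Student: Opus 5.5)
Since $m(g_*)>0$ by Lemma~\ref{lem:outer-admissible-interval}, and $\beta(g)=-2m(g)B(g)$ by Lemma~\ref{lem:X23-reduced-data}(ii), the claim $\beta(g_*)>0$ is equivalent to $B(g_*)<0$. The plan is to exploit that $B$ is affine in $g$, $B(g)=B_0+B_1g$, with
\[
B_0=-(r^2+rx-r-2x),\qquad B_1=yr^2+yrx-2yr-3yx-y-r^2-rx+x .
\]
First observe $B_0<0$: in the variables $u=r-2\in(0,1)$, $w=y-r\geq 0$ one has $r^2+rx-r-2x=r(r-1)+x(r-2)>0$. So $B(0)<0$. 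If $B_1\geq0$, then $B(g_*)\leq B_0<0$ because $g_*<0$ (Proposition~\ref{prop:X23-automatic-signs}), and we are done. The case to handle is $B_1<0$, where $B$ increases as $g$ decreases.

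In that case, since $g_*=-h_1/(2h_2)$ with $h_2>0$, the condition $B(g_*)<0$ is equivalent, after multiplying by $2h_2>0$, to
\[
2h_2B_0-h_1B_1<0 .
\]
Call the left side $N_B$. This is a polynomial in $x,u,w$ built from the explicit coefficients $h_2=-(r+2x)T_{21}(U)$ and $h_1$ (written in Proposition~\ref{prop:X23-automatic-signs} and Appendix~\ref{app:X23-Frydman-formulas}). I expect, as in the computation of $N_{\min}$ and of $2h_2(T_{31}(U_{g_*})+1)$, that $N_B$ factors, plausibly with a factor of $E$, $T_{21}(U)$, or $(yx+y+r+x)$, leaving a cofactor whose sign is visible after substituting $u,w$. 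Concretely I would compute $N_B$ symbolically, extract the known factors $E$ and $T_{21}(U)$ (both negative), and write the cofactor as a polynomial in $x,w$ with coefficients in $u$, showing that all coefficients have a common sign on $0<u<1$. Note that $N_B$ is affine in $w$ (each of $h_1,h_2,B_1$ is affine in $y$, and $B_0$ does not involve $y$, so $N_B$ is at most quadratic in $w$); I would therefore check the sign at $w=0$ and the sign of the $w$-coefficients separately, as was done for $N_{\min}$.

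An alternative route, which may be cleaner, is to avoid $N_B$. A concave quadratic $q_g$ that is negative at $t=0$ and at $t=\bar c$, and whose vertex-location expression $2\alpha\bar c+\beta$ is negative, must have vertex $-\beta/(2\alpha)<\bar c$. Here $\alpha(g_*)<0$ (Lemma~\ref{lem:X23-concavity}), $T_{31}(U_{g_*})+1<0$ (Proposition~\ref{prop:X23-automatic-signs}), and $\disc q_{g_*}=H_{23}>0$, since $\Theta(g_*)<0$ and presumably $T_{21}$ is the relevant complementary minor, which is negative. Positive discriminant means $q_{g_*}$ has real roots, and $\gamma(g_*)=V_{32}(U_{g_*})<0$ because $U_{g_*}\notin\V$ (Lemma~\ref{lem:outer-admissible-interval}). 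So both roots have the same sign, and $\beta>0$ iff they are positive. This reduces the claim to excluding the case of both roots negative. I would try to exclude that case by the expansion $\beta_{ij}=-a_{ii}(2a_{ji}T_{ji}+a_{jk}T_{jk})$ used in Proposition~\ref{prop:reduced-Frydman-test}: show that $2a_{32}T_{32}+a_{31}T_{31}<0$ at $U_{g_*}$ using $T_{31}(U_{g_*})<-1$ and a sign for $T_{32}(U_{g_*})$.

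The main obstacle is the sign of the explicit polynomial $N_B$ (or of $T_{32}(U_{g_*})$) over the whole cell $0<u<1$, $w\geq0$, $0<x<r$. That is where the factorization has to be found; if no clean factor appears, I would fall back to grouping monomials after the substitution $x=r\xi$, $\xi\in(0,1)$, to obtain manifestly signed coefficients.
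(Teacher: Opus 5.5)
\textbf{There is a genuine gap.} Your reduction is sound: $\beta(g_*)>0$ is equivalent to $B(g_*)<0$, the case $B_1\ge0$ is immediate, and $N_B=2h_2B(g_*)$. The problem is the final sign. The sign of $N_B$ does \emph{not} follow from the cell inequalities $0<u<1$, $w\ge0$, $0<x<r$ alone, so no factorization or sign-of-coefficients argument over that cell can succeed.

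Take the corner $r=y=2$, $x=1$. There $h_2=12$, $h_1=6$, $g_*=-1/4$, $B_0=-2$ and $B_1=-9$, so $B(g_*)=1/4>0$ even though $B_1<0$. By continuity the same happens at points of the cell such as $r=y=2.01$, $x=1$, where $B(g_*)\approx0.21$. These points are excluded only by the standing hypothesis $U\notin R_6$. At the corner, $\gamma(g_*)=V_{32}(U_{g_*})=1>0$, so $U_{g_*}\in\V$ and hence $U\in R_6$.

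Your second route has the same defect. Given $\alpha<0$ and $\gamma<0$, ``both roots positive'' is just a restatement of $\beta>0$, so the hypothesis $U\notin R_6$ enters only tautologically. The remaining step asks for a cell-wide sign of $2a_{32}T_{32}+a_{31}T_{31}$ at $U_{g_*}$. At the same corner, $T_{32}(U_{g_*})=3/4$ and $T_{31}(U_{g_*})=-11/4$, giving $2\cdot4\cdot\frac34+2\cdot(-\frac{11}4)=\frac12>0$. Also, the complementary minor in $H_{23}=a_{22}T_{31}\widetilde H_{23}$ is $T_{31}$, not $T_{21}$.

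\textbf{The missing idea} is to make $\gamma(g_*)<0$ do real work in the sign argument itself. The paper does this with the exact identity $(2y+r)\Theta(g)+\lambda\gamma(g)=B(g)G(g)$, where $\lambda=yr+yx-y+x>0$ and $G$ is affine with positive slope. The argument runs as follows.
\begin{enumerate}
\item When $b_1<0$, the right side is a concave quadratic, and the zero $g_B$ of $B$ is one of its roots.
\item At $g_*$ the left side is negative, because $\Theta(g_*)<0$ and $\gamma(g_*)<0$.
\item Its derivative at $g_*$ equals $\lambda\gamma'(g_*)$, since $g_*$ is the vertex of $\Theta$. The explicit formula $h_2\gamma'(g_*)=-(r+1)(r+2x)L$ with $L>0$ makes this negative.
\item Hence $g_*$ lies to the right of both roots, so $g_B<g_*$ and $B(g_*)<0$.
\end{enumerate}
The derivative information is essential. $B(g_*)G(g_*)<0$ alone does not suffice, because $G$ changes sign on $(g_{\min},0)$: one has $G(g_{\min})=-1$ while $G(0)>0$.
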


\begin{proof}
Since $\beta(g)=-2m(g)B(g)$ and $m(g)>0$ for $g\in (g_{\min},0)$,
it will suffice to show that $B(g_*)<0$.

Write $B(g)=b_0+b_1g$, where
\[
b_0=-(r^2+rx-r-2x)<0.
\]
Suppose that $b_1\geq0$. Then $g_*<0$ gives
$B(g_*)\leq B(0)<0$, and we are done.  

Suppose that $b_1<0$. Let $g_B=-b_0/b_1<0$ be  the zero of $B(g)$. It will suffice to show that $g_B<g_*$.

Let
\[
\lambda=yr+yx-y+x>0
\]
and define
\[
\mathcal H(g)=(2y+r)\Theta(g)+\lambda\gamma(g).
\]
Exact polynomial division gives
\[
\mathcal H(g)=B(g)G(g),
\]
where
\[
G(g)=y(r-1)(x+1)+x
 +(r+2x)\bigl(y(r-1)+1\bigr)g.
\]
The slope of $G$ is positive, whereas the slope of $B(g)=b_0+b_1g$ is
negative.  Hence $g\mapsto \mathcal H(g)$ is a concave quadratic with two real roots, one of which is $g_B$.

Since no matrix $U_g$ with $g\in(g_{\min},0)$ belongs to $\V$,
$\gamma(g_*)=V_{32}(U_{g_*})<0$.  Together with
$\Theta(g_*)<0$, this gives $\mathcal H(g_*)<0$.  Moreover, 
differentiating with respect to $g$ in 
the definition of $\mathcal H(g)$ and setting $g=g_*$ we get
\[
\mathcal H'(g_*)=\lambda\gamma'(g_*),
\]
where we have used that  $g_*$ is the vertex of $\Theta$.  Direct calculation shows that
\[
h_2\gamma'(g_*)=-(r+1)(r+2x)L,
\]
where $h_2$ is as in Lemma \ref{lem:X23-reduced-data} and
\[
\begin{aligned}
	L={}&2u^3+2u^2w+3u^2x+8u^2+3uwx+5uw\\
	&+6ux+8u+wx+w+3x.
\end{aligned}
\]
Every term in $L$ is nonnegative, and the term $3x$ is strictly
positive. Therefore $L>0$. Since $h_2>0$, $\gamma'(g_*)<0$. This shows that 
$\mathcal H'(g_*)<0$.

We have shown that $\mathcal H(g)$ is a concave quadratic
that is negative and decreasing at $g_*$. It follows that $g_*$ lies to the right of both
of its roots.  Thus $g_B<g_*$, and the negative slope of $B$
then gives $B(g_*)<0$.  Finally,
$\beta(g_*)=-2m(g_*)B(g_*)>0$.
\end{proof}

\begin{theorem}
\label{thm:hard-cell-predecessor}
Under the standing hypotheses,
\[
g_*\in(g_{\min},0)
\qquad\text{and}\qquad
U_{g_*}\in\NH\subseteq R_6.
\]
\end{theorem}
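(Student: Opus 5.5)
The plan is to assemble the facts already proved into the five conditions of the reduced Frydman test, Proposition~\ref{prop:reduced-Frydman-test}, applied to $A=U_{g_*}$ with the permutation $(i,j,k)=(2,3,1)$.

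First, Proposition~\ref{prop:X23-automatic-signs} gives $g_{\min}<g_*<0$. Then Lemma~\ref{lem:outer-admissible-interval} shows that $U_{g_*}>0$ and $U_{g_*}\notin\V$. Since $U_{g_*}$ is positive, it is not in $\T$ either. These are exactly the hypotheses of Proposition~\ref{prop:reduced-Frydman-test}, and the relevant quadratic is $q_{g_*}=q_{23}^{U_{g_*}}$ with endpoint $C=\bar c=(r+1)/(r+2x)$, by Lemma~\ref{lem:X23-reduced-data}(i).

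Next, I verify the five inequalities in turn.
\begin{enumerate}[(a)]
\item $T_{33}(U_{g_*})<0$ follows from Lemma~\ref{lem:X23-concavity}.
\item $\beta(g_*)>0$ is Proposition~\ref{prop:B-tangency-order}.
\item $2\alpha(g_*)\bar c+\beta(g_*)<0$ follows from Lemma~\ref{lem:X23-reduced-data}(iii). That identity gives $2\alpha(g_*)\bar c+\beta(g_*)=2m(g_*)\bigl(T_{31}(U_{g_*})+1\bigr)$. Here $m(g_*)>0$, and $T_{31}(U_{g_*})+1<0$ by Proposition~\ref{prop:X23-automatic-signs}.
\item $\widetilde H_{23}(U_{g_*})=4\Theta(g_*)<0$ by Proposition~\ref{prop:X23-automatic-signs}, so $\widetilde H_{23}\leq 0$ holds strictly.
\item The complementary minor for $(i,j,k)=(2,3,1)$ is $T_{jk}=T_{31}(U_{g_*})$. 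It satisfies $T_{31}(U_{g_*})<-1<0$, again by Proposition~\ref{prop:X23-automatic-signs}.
\end{enumerate}

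With all five conditions in place, Proposition~\ref{prop:reduced-Frydman-test} shows that $q_{23}^{U_{g_*}}$ has a root in $[0,C_{23}]$. Since $U_{g_*}\notin\T\cup\V$, Theorem~\ref{thm:homogeneous-finite-move}(iii) places $U_{g_*}$ in $\NH$. Part (iv) of the same theorem then gives $\NH\subseteq R_6$.

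No new computation is needed here, since every sign has already been established. The only point requiring care is the indexing: one must confirm that, for the ordered pair $(2,3)$, the "third state" $k$ in Lemma~\ref{lem:Frydman-discriminant-factorization} is $1$. This makes the relevant minor $T_{31}$, which is why Proposition~\ref{prop:X23-automatic-signs} records the sign of $T_{31}(U_{g_*})+1$.
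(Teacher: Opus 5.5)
Your proposal is correct and matches the paper's proof: both combine Lemma~\ref{lem:outer-admissible-interval}, Lemma~\ref{lem:X23-concavity}, and Propositions~\ref{prop:X23-automatic-signs} and~\ref{prop:B-tangency-order} in the reduced Frydman test for the pair $(2,3)$. Your explicit check of the fifth condition $T_{31}(U_{g_*})<0$ fills a small omission, since the paper lists only four conditions. The omission is harmless there, because the proof of Proposition~\ref{prop:reduced-Frydman-test} shows that $T_{jk}<0$ already follows from $T_{jj}<0$ and $2\alpha_{ij}C+\beta_{ij}<0$.
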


\begin{proof}
Lemma~\ref{lem:outer-admissible-interval} gives
$U_{g_*}\notin\V$.  The remaining hypotheses of the reduced
Frydman test are
\[
T_{33}(U_{g_*})<0,
\qquad
\beta(g_*)>0,
\qquad
2\alpha(g_*)\bar c+\beta(g_*)<0,
\qquad
\widetilde H_{23}(U_{g_*})<0.
\]
They follow from Lemma~\ref{lem:X23-concavity} and
Proposition~\ref{prop:X23-automatic-signs}, and
Proposition~\ref{prop:B-tangency-order}.  Hence $q_{g_*}$ has an admissible root and $U_{g_*}\in\NH$.
\end{proof}

\begin{corollary}
\label{cor:X23-forward-persistence}
Under the hypotheses of Theorem~\ref{thm:hard-cell-predecessor},
$X_{23}(h)U\in R_7$ for all $h\geq 0$.
\end{corollary}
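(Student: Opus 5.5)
We must show $X_{23}(h)U\in R_7$ for all $h\geq0$. The plan is to use the matrix supplied by Theorem~\ref{thm:hard-cell-predecessor} as a six-move predecessor and then absorb everything into a single extra $X_{23}$ factor.

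Put $g_*\in(g_{\min},0)$ as in that theorem and set $B=U_{g_*}=X_{23}(g_*)U$. Theorem~\ref{thm:hard-cell-predecessor} gives $B\in\NH\subseteq R_6$. Since $X_{23}(a)X_{23}(b)=X_{23}(a+b)$ for all real $a,b$, we have
\[
U=X_{23}(-g_*)B,
\qquad
X_{23}(h)U=X_{23}(h-g_*)B .
\]
Because $g_*<0$ and $h\geq0$, the parameter $h-g_*$ is strictly positive. Hence $X_{23}(h)U$ is a product of one homogeneous elementary matrix with nonnegative parameter and a matrix of $R_6$.

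It remains to check that this representation lands in $R_7$ in the homogeneous sense. Write $B\sim X_{i_6j_6}(t_6)\cdots X_{i_1j_1}(t_1)$, say $B=L\,W\,R$ with $L,R\in\mathcal D_+$ and $W$ the word. The diagonal factor $L$ can be moved to the left through $X_{23}(h-g_*)$ using
\[
X_{23}(t)L=L\,X_{23}\!\left(\tfrac{\ell_3}{\ell_2}t\right),
\]
which follows from the identity $DX_{ij}(t)D^{-1}=X_{ij}(d_it/d_j)$, and the new parameter stays nonnegative. This gives
\[
X_{23}(h)U\sim X_{23}\!\left(\tfrac{\ell_3}{\ell_2}(h-g_*)\right)X_{i_6j_6}(t_6)\cdots X_{i_1j_1}(t_1).
\]
Finally, $\det X_{23}(h)U=\det U=1>0$ and all entries are nonnegative, so the matrix lies in $\mathcal M_+$ and therefore in $R_7$.

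There is no real obstacle: all the difficulty was absorbed into Theorem~\ref{thm:hard-cell-predecessor}. The only point needing care is that $g_*$ is strictly negative, which was shown in Proposition~\ref{prop:X23-automatic-signs}. This ensures that the combined $X_{23}$ parameter is positive for every $h\geq0$, not just for small $h$, and this is what yields the global statement.

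Combined with Corollary~\ref{cor:remaining-hard-cell} and Lemma~\ref{lem:X12-persistence}, the hypothesis of Theorem~\ref{thm:critical-family-reduction} then holds for all $x,r,y>0$.
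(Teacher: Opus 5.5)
Your proposal is correct and follows the paper's proof: you write $X_{23}(h)U=X_{23}(h-g_*)U_{g_*}$ with $h-g_*>0$ and $U_{g_*}\in R_6$ from Theorem~\ref{thm:hard-cell-predecessor}, which gives a seven-move representation. You also spell out how the diagonal factor commutes past $X_{23}$, a step the paper leaves implicit.
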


\begin{proof}
Since $U=X_{23}(-g_*)U_{g_*}$, one has
\[
X_{23}(h)U=X_{23}(h-g_*)U_{g_*}.
\]
Here $h-g_*>0$, and $U_{g_*}\in R_6$, so the right-hand side is
a seven-move representation.  Row normalization gives the corresponding
stochastic pull-in path.
\end{proof}

\begin{proof}[Proof of Theorem~\ref{thm:main}]
By Theorem~\ref{thm:critical-family-reduction}, it suffices
to show that for every critical matrix $U=U(x,r,y)$ one has $X_{23}(h)U\in R_7$ for all small enough $h>0$.

If $U\in R_6$, this is immediate. Assume that $U\notin R_6$.
Lemma~\ref{lem:small-r-vestibule} gives $r>1$. 

If $1<r<2$, then
Lemma~\ref{lem:R6-elimination} (1) forces $y>r$, and
Lemma~\ref{lem:X12-persistence} applies. 

If $r=2$, parts (1) and (3)
of Lemma~\ref{lem:R6-elimination} cover all parameters. 

If $2<r<3$, then either $y<r$, in which case
Lemma~\ref{lem:X12-persistence} applies, or $x<r\leq y$, in which case
Corollary~\ref{cor:X23-forward-persistence} applies.

 If $r\geq3$,
Lemma~\ref{lem:R6-elimination} (2) forces $y<r$, and
Lemma~\ref{lem:X12-persistence} applies. 

This shows that every critical matrix has
the required persistence in $R_7$, and Theorem~\ref{thm:critical-family-reduction}
concludes the proof.
\end{proof}

\section*{Statement on the use of artificial intelligence}

This paper was developed through an extended mathematical collaboration
with OpenAI's ChatGPT, using the GPT-5.6 Sol model and Codex through a ChatGPT Plus account. I supplied ChatGPT with an initial report on the problem which discussed  relevant ideas and identities and the overall strategy of studying a first exit from
$R_7$. Starting from this framework, ChatGPT assisted in solving the  algebraic conditions arising at first exit and in finding a convenient normalization of the matrices satisfying them. This led ChatGPT to the explicit three-parameter critical family \(U(x,r,y)\) and to more tractable formulas for the discriminants and their derivatives. Codex was used for numerical exploration of the critical family and to identify
which certificate appeared to remain active in the unresolved
parameter region. Guided by that exploration, ChatGPT proved the persistent \(X_{23}\)-certificate. The proof then underwent several rounds of revision in collaboration with ChatGPT, during which the original argument was progressively reorganized and simplified into the form presented here.
I take full responsibility for the contents of the paper and for any errors 
that remain.

\appendix
\section{The quadratic used in the
	\texorpdfstring{$X_{12}$}{X12}-persistence argument}
\label{app:X12-persistence-formulas}
This appendix records explicit formulas for the coefficients and the discriminant of the quadratic $\mathcal N_h(s)$
that appears in Lemma \ref{lem:X12-persistence}.
 
Recall that in the proof of Lemma \ref{lem:X12-persistence}, we set
$U=U(x,r,y)$,
\[
U_h=X_{23}(h)U,
\qquad
Q_h(s)=X_{12}(-s)U_h.
\]

\subsection*{Explicit entries of $Q_h(s)$}
Direct multiplication gives
\small{\[
Q_h(s)=
\begin{pmatrix}
	1+y-(1+2h)s
	&
	x(1+y)+r+y-s\bigl(1+x+h(r+2x)\bigr)
	&
	r+2y-s\bigl(2+h(r+1)\bigr)
	\\[2mm]
	1+2h
	&
	1+x+h(r+2x)
	&
	2+h(r+1)
	\\
	2&r+2x&r+1
\end{pmatrix}.
\]}

\subsection*{The quadratic $\mathcal N_h(s)$}
Recall that  $\mathcal N_h(s)=\widetilde H_{12}(Q_h(s))$.
A direct calculation shows that
\[
\mathcal N_h(s)
=A_2(h)(s-y)^2+A_1(h)(s-y)+A_0(h),
\]
where
\begin{align*}
A_2(h) &=(r-2)(1+2h)\bigl(2+(r+1)h\bigr)^2,\\
A_1(h) &= 2h\bigl(2+(r+1)h\bigr)
\left[
2y(r-2)+3(r-1)
+h(r+1)\bigl(y(r-2)+r\bigr)
\right],
\end{align*}
and
\[
\begin{aligned}
	A_0(h)
	={}&h\Bigl[
	4(r-1)(y-r)\\
	&\qquad
	+h(r+1)\bigl(8y(r-1)-r(r+1)\bigr)
	+2yr(r+1)^2h^2
	\Bigr].
\end{aligned}
\]

In particular,
\[
A_2(0)=4(r-2),
\qquad
A_1(0)=A_0(0)=0,
\]
and hence
\[
\mathcal N_0(s)=4(r-2)(s-y)^2.
\]

\subsection*{The discriminant of $\mathcal N_h(s)$}
Let
\[
\Delta(h)=\disc_s(\mathcal N_h(s))
=A_1(h)^2-4A_2(h)A_0(h).
\]
Using that $A_0'(0)=4(r-1)(y-r)$,
we obtain
\[
\begin{aligned}
	\Delta'(0)
	&=-4A_2(0)A_0'(0)\\
	&=64(r-y)(r-2)(r-1).
\end{aligned}
\]

\section{Explicit formulas for the \texorpdfstring{$X_{23}$}{X23}-Frydman quadratic}
\label{app:X23-Frydman-formulas}

Let
\[
U=U(x,r,y),\qquad U_g=X_{23}(g)U.
\]
This appendix records the polynomial identities used in
Section~\ref{subsec:X23-certificate} in the application of the reduced Frydman test (Proposition \ref{prop:reduced-Frydman-test}) to $U_g$.

\subsection*{Modified minors}
The modified minors needed in the argument are
\begin{align*}
T_{11}(U_g) &=E=rx-r-3x+1,\\
T_{21}(U_g) &=yrx-yr-3yx+y-rx+r+x,\\
T_{33}(U_g) &=1-r+\bigl(y(r-2)-r\bigr)g,
\end{align*}
and
\[
\begin{aligned}
T_{31}(U_g)={}&rx-r-2x\\
&+g\bigl(-yrx+yr+3yx-y+rx-r-x\bigr).
\end{aligned}
\]

\subsection*{Frydman quadratic coefficients}
For the ordered pair $(2,3)$, write
\[
q_g(t):=q_{23}^{U_g}(t)=\alpha(g)t^2+\beta(g)t+\gamma(g).
\]
Then, with
\[
m(g)=x+1+(r+2x)g,
\]
one has
\begin{align*}
\alpha(g) &=(r+2x)m(g)T_{33}(U_g),\\
\beta(g) &=-2m(g)B(g),
\end{align*}
where
\[
\begin{aligned}
B(g)={}&-(r^2+rx-r-2x)\\
&+g\bigl(yr^2+yrx-2yr-3yx-y-r^2-rx+x\bigr).
\end{aligned}
\]
The constant term is
\[
\gamma(g)=V_{32}(U_g)=c_2g^2+c_1g+c_0,
\]
where
\begin{align*}
c_2 &=(r+1)(r+2x)(yr-3y-r+1),\\
c_1 &=(r+1)\bigl(yrx+yr-3yx-3y-r^2-3rx+r+5x\bigr),\\
c_0 &=-r^2x-r^2+rx+r+2x.
\end{align*}

\subsection*{Admissible endpoint}
The admissible endpoint of the Frydman quadratic $q_g(t)$ is
\[
\bar c=\frac{r+1}{r+2x}.
\]

\subsection*{Vertex location}
The vertex-location expression for $q_g(t)$ simplifies to
\[
2\alpha(g)\bar c+\beta(g)
=2m(g)\bigl(T_{31}(U_g)+1\bigr).
\]

\subsection*{Reduced discriminant factor $\widetilde H_{23}$}
The reduced discriminant factor $\widetilde H_{23}$ of $U_g$ is
\[
\widetilde H_{23}(U_g)=4\Theta(g),
\qquad
\Theta(g)=h_2g^2+h_1g+x^2(r-2),
\]
where
\[
\begin{aligned}
h_1={}&-yrx^2+yr+3yx^2+2yx-y\\
&+r^2x-r^2+3rx^2-4rx+r-5x^2+3x
\end{aligned}
\]
and
\[
h_2=-(r+2x)\bigl(yrx-yr-3yx+y-rx+r+x\bigr).
\]

We also have the identity 
\[
(2y+r)\Theta(g)+\lambda\gamma(g)=B(g)G(g),
\]
where $B(g)$ has been defined above and 
\begin{align*}
\lambda &=yr+yx-y+x,\\
G(g) &=y(r-1)(x+1)+x
+(r+2x)\bigl(y(r-1)+1\bigr)g.
\end{align*}

\subsection*{The discriminant of $\Theta(g)$}
The discriminant of the quadratic $g\mapsto \Theta(g)$ factors as
\[
\disc_g(\Theta)=EH_+H_-,
\]
where
\begin{align*}
H_+ &=y(x+1)^2+rx-r+x^2-3x,\\
H_- &=x(y+1)(r-3)-(r-1)(y-r).
\end{align*}

\subsection*{Identities at $g=g_*$}
At the vertex $g_*=-h_1/(2h_2)$, the following identities hold:
\[
2h_2\bigl(T_{31}(U_{g_*})+1\bigr)
=-(yx+y+r+x)E\,T_{21}(U),
\]
and
\[
h_2\gamma'(g_*)
=-(r+1)(r+2x)
\bigl(2yr^2+3yrx-3yr-5yx-y-r^2-rx+r+3x\bigr).
\]

\end{document}